\newtheorem{thm}{Theorem}[section]
\newtheorem{prop}[thm]{Proposition}
\newtheorem{cor}[thm]{Corollary}
\newtheorem{lem}[thm]{Lemma}
\theoremstyle{definition}
\newtheorem{rem}{Remark}
\newcommand{\ra}{\rightarrow}
\newcommand{\mc}{\mathcal}
\newcommand{\mb}{\mathbb}
\newcommand{\sg}{\sigma}
\newcommand{\eps}{\epsilon}
\newcommand{\const}{1/4}
\renewcommand{\ss}{\substack}
\newcommand{\e}{\varepsilon}
\newcommand{\asum}{\sideset{}{^{\ast}}\sum}
\renewcommand{\bar}{\overline}
\begin{document}
\title{On a rigidity property for Quadratic Gauss Sums}
\author{Alexander P. Mangerel}
\address{Department of Mathematical Sciences, Durham University, Stockton Road, Durham, DH1 3LE, UK}
\email{smangerel@gmail.com}
\begin{abstract}
Let $N$ be a large prime and let $c > \const$. We prove that if $f$ is a $\pm 1$-valued multiplicative function, such that the exponential sums 
$$
S_f(a) := \sum_{1 \leq n < N} f(n) e(na/N), \quad a \pmod{N}
$$
satisfy the ``Gauss sum-like'' approximate dilation symmetry property 
$$
\frac{1}{N}\sum_{a \pmod{N}} |S_f(ap) - f(p)S_f(a)|^2 = o(N),
$$
uniformly over all primes $p \leq N^c$ then $f$ coincides with a real character modulo $N$ at \emph{almost all} $n < N$. As a consequence, taking $f$ to be the Liouville function we connect this exponential sums property to the location of real zeros of $L(s,\chi)$ close to $s = 1$, for $\chi$ the Legendre symbol modulo $N$. \\
Assuming the $L$-functions of primitive Dirichlet characters modulo $N$ have a sufficiently wide zero-free region (of Littlewood type), we also show a more general result in which any $c > 0$ may be taken. 
\end{abstract}
\maketitle
\section{Introduction and main results}
Throughout this paper, let $N$ be a large prime and let $\chi = \chi_N := (\tfrac{\cdot}{N})$ denote the Legendre symbol modulo $N$. Write\footnote{As usual, for $t \in \mb{R}$ we write $e(t) := e^{2\pi i t}$.}
$$
S_{\chi}(a) := \sum_{1 \leq n < N} \chi(n)e(an/N), \quad a \pmod{N}
$$
to denote the twisted Gauss sum of $\chi$. It is well-known that $S_{\chi}$ satisfies the dilation symmetry property
\begin{equation} \label{eq:GaussRigreg}
S_{\chi}(ab) = \chi(b) S_{\chi}(a) \text{ for all } a \pmod{N}
\end{equation}
and any $b \in \mb{N}$. Since $\chi$ is completely multiplicative, i.e., $\chi(mn) = \chi(m)\chi(n)$ for all $m,n \in \mb{N}$, this is equivalent to the (slightly weaker) property that
\begin{equation}\label{eq:GaussRig}
S_{\chi}(ap) = \chi(p) S_{\chi}(a) \text{ for all } a \pmod{N},
\end{equation}
and any \emph{prime} $p$. It is natural to ask to what extent this property of Gauss sums characterises quadratic Dirichlet characters among all $\pm 1$-valued\footnote{As Gauss sums are supported on the interval $\mb{Z} \cap [1,N-1]$, wherein $\chi(n) \neq 0$, we will think of $\chi$ in the sequel as non-vanishing; note that this model is obviously inappropriate for $N$ \emph{composite}.} multiplicative functions. Moreover, it is also natural to wonder whether the symmetry property \eqref{eq:GaussRig} can be relaxed somewhat, without admitting counterexamples that are not ``character-like'', in some sense.\\
To fix ideas, let $f: \mb{N} \ra \{-1,+1\}$ be a multiplicative function, and set
%
%
$$
S_f(a) = S_f(a;N) := \sum_{1 \leq n < N} f(n) e(an/N), \quad a \pmod{N}.
$$
In this note we address which such functions $f$ satisfy a weaker form of \eqref{eq:GaussRig}: roughly speaking, we wish to characterise those $f$ such that
\begin{equation}\label{eq:Sfdilprop}
S_f(ap) \sim g(p) S_f(a) \text{ on average over $a \pmod{N}$,}
\end{equation}
\emph{uniformly} over $p \leq P$ for some (sufficiently quickly-growing) function $P = P(N)$ and some sequence $(g(p))_{p \leq P}$ (see \eqref{eq:unifSf} below for the precise property). An approximate dilation symmetry property of this type arose in our recent paper \cite{ManShus}, in connection with a problem of Shusterman on sign patterns $(\lambda(n),\lambda(M-n))$ of the Liouville function, for large even integers $M$. \\
Heuristically, one might expect that if $f$ satisfies \eqref{eq:Sfdilprop} then, at least for \emph{typical} $p \leq N^c$, $g(p) = f(p)$ and $f$ must ``behave like'' a real character modulo $N$ within the fundamental domain $[1,N-1]$. 
We shall prove the following unconditional result that makes this heuristic precise.
\begin{thm} \label{thm:dil}
Let $c > \const$. 
Let $N$ be a large prime, and let $f: \mb{N} \ra \{-1,+1\}$ be multiplicative. Let $1 \leq M \leq N$, and suppose that
\begin{equation}\label{eq:unifSf}
\max_{p \leq N^c} \frac{1}{N} \sum_{a \pmod{N}} \left|S_f(ap) - g(p) S_f(a)\right|^2 \leq \frac{N}{M}
\end{equation}
for some $\pm 1$-valued sequence $(g(p))_{p \leq N^c}$. Then there is a real character $\psi$ modulo $N$
such that
\begin{equation*} 
|\{n < N : f(n) \neq \chi(n)\}| \ll \frac{N}{\sqrt{M}}.
\end{equation*}
Moreover, 
there is an absolute constant $M_0$ such that if \eqref{eq:unifSf} holds with $M \geq M_0$ then
$$
\sum_{\ss{p < N \\ f(p) \neq \psi(p)}} \frac{1}{p} \ll \frac{1}{\min\{M^{1/4}, \sqrt{\log N}\}},
$$
and $g(p) = f(p)$ whenever $f(p) = \psi(p)$.
\end{thm}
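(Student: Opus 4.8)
The key is to translate the hypothesis \eqref{eq:unifSf} from the Fourier side to the physical side. By Parseval/Plancherel on $\mb{Z}/N\mb{Z}$, the quantity $\tfrac1N\sum_a |S_f(ap) - g(p)S_f(a)|^2$ equals (up to boundary corrections coming from the fact that $S_f$ is a sum over $1\le n<N$ rather than all residues) an $L^2$-norm of the ``dilated minus scaled'' function $n \mapsto f(\bar p n) - g(p) f(n)$, where $\bar p$ is the inverse of $p$ mod $N$. More precisely, I would first write $S_f(ap) = \sum_{1\le n<N} f(n) e(npa/N)$ and reindex $m \equiv np \pmod N$, so that $S_f(ap)$ is essentially the Fourier transform of $m \mapsto f(\bar p m)\mb{1}_{\bar p m \bmod N \in [1,N-1]}$. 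The hypothesis then says that, for every prime $p \le N^c$, the function $f(\bar p \cdot)$ agrees with $g(p) f(\cdot)$ in $L^2$ up to an error of size $\sqrt{N/M}$, i.e. the number of $n<N$ with $f(\bar p n \bmod N) \ne g(p) f(n)$ is $\ll N/M$, after controlling the contribution of the ``wraparound'' set where $\bar p n \bmod N$ lands outside $[1,p\cdot\text{something}]$ — a set of size $O(N/p)$ or so, hence negligible once summed against $1/p$.

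With this dictionary in hand, the structure is that of a (one-sided, approximate) multiplicativity/character-rigidity statement: $f$ restricted to $[1,N-1]$ behaves, under multiplication by (inverses of) small primes mod $N$, like a homomorphism with multiplier $g(p)$. The next step is to bootstrap this to genuine multiplicativity mod $N$: using that the primes $p \le N^c$ generate a large chunk of $(\mb{Z}/N\mb{Z})^\times$ and that errors are summable (each individual dilation costs $O(N/M)$, and one needs only boundedly many dilations to reach a typical target), I would show that $g$ extends to a completely multiplicative $\pm1$-valued function on the primes — i.e. essentially a real Dirichlet character $\psi$ mod $N$ (the only such are the trivial character and $\chi$) — and that $f(n) = \psi(n)$ for all but $\ll N/\sqrt M$ values of $n<N$. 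The square-root loss $N/\sqrt M$ versus the input $N/M$ is the standard loss in passing from ``$f(\bar p n)=g(p)f(n)$ for most $n$, for each $p$'' to ``$f$ is globally character-like'': one pays a union bound / Cauchy–Schwarz over the $\sqrt M$-or-so dilations needed to cover a positive proportion of residues, and this is exactly where the constant $c > 1/4$ enters (one needs $N^c$ to exceed $N^{1/4+o(1)}$ so that products of a bounded number of primes below $N^c$ equidistribute, via a Burgess-type or large-sieve input on the multiplicative group).

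For the ``Moreover'' part, the goal upgrades from a count of exceptional $n$ to a bound on $\sum_{p: f(p)\ne\psi(p)} 1/p$. Here I would argue as follows: if $f(p) \ne \psi(p)$ for some moderately large prime $p$, then for \emph{every} $n$ with $pn < N$ one has $f(pn) = f(p)f(n) = -\psi(p)f(n) = -\psi(pn)\cdot(f(n)/\psi(n)\cdot(\text{sign}))$... more carefully, multiplicativity of $f$ and $\psi$ forces $f(pn)/\psi(pn) = -f(n)/\psi(n)$, so the ``defect set'' $\{n: f(n)\ne\psi(n)\}$ and its complement get swapped by multiplication by $p$; since that defect set has density $\ll 1/\sqrt M$ inside $[1,N-1]$, multiplication by $p$ maps a density-$(1-O(1/\sqrt M))$ set into a density-$O(1/\sqrt M)$ set, which is impossible unless $p$ is large, specifically unless $1/p \ll 1/\sqrt M$ roughly — summing this over all bad primes and being slightly more careful (partial summation, and using that products of two bad primes must be ``good'', a parity/inclusion-exclusion argument) yields $\sum_{p: f(p)\ne\psi(p)} 1/p \ll M^{-1/4}$. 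The competing bound $(\log N)^{-1/2}$ comes from the alternative regime where $M$ is huge: then one instead invokes a Halász/pretentious-distance estimate — $f$ and $\psi$ are so close in $L^2$ on $[1,N]$ that $\mb{D}(f,\psi;N)^2 = \sum_{p<N}(1-f(p)\psi(p))/p$ is forced to be $\ll 1/\sqrt{\log N}$ — since an $L^2$-proximity of mean-zero multiplicative functions on $[1,N]$ translates to pretentious closeness up to a $\sqrt{\log N}$ loss. The final claim $g(p) = f(p)$ when $f(p) = \psi(p)$ then drops out of the $L^2$-dilation identity: on the large set where $f = \psi$, the dilation $f(\bar p n) = \psi(\bar p n) = \psi(\bar p)\psi(n) = \psi(p)f(n)$ holds exactly for most $n$, forcing $g(p) = \psi(p) = f(p)$ unless the error term $N/M$ swamps it, which it cannot once $M \ge M_0$.

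**Main obstacle.** The crux — and where I expect the real work to lie — is the bootstrapping step: going from the per-prime approximate relation $f(\bar p n) \approx g(p) f(n)$ (valid for $p \le N^c$, $c > 1/4$) to a \emph{global} identification of $f$ with a character on $[1,N-1]$ with only a $\sqrt M$ loss. This requires (i) a quantitative statement that products of boundedly many primes $\le N^{1/4+\delta}$ hit essentially every residue class mod $N$ with the right density — i.e. an equidistribution input whose error is genuinely $o(N)$-controlled, which is presumably where Burgess's character-sum bound (the source of the $1/4$) is used — and (ii) controlling the accumulation of the $O(N/M)$ errors across the chain of dilations without losing more than a square root, which likely needs a clean $L^2$/large-sieve averaging over $a$ rather than a crude union bound over $n$.
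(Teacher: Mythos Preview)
Your high-level strategy is right and matches the paper: Burgess gives that products of $O_c(1)$ primes $\le N^c$ cover all of $(\mb{Z}/N\mb{Z})^\times$ when $c>1/4$, and telescoping the hypothesis over such a product extends the approximate dilation relation from primes to arbitrary residues $b$ at cost $O(N/M)$. But there are two substantive gaps, and one point of confusion.

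\textbf{Confusion.} There are no ``boundary corrections'' in the Plancherel step: since $N$ is prime, $[1,N-1]$ is exactly the set of nonzero residues, so with $F(0):=0$ and $F(n):=f(n)$ for $1\le n<N$, Plancherel gives $\tfrac1N\sum_a|S_f(ap)-g(p)S_f(a)|^2 = \sum_{n}|F(p^{-1}n)-g(p)F(n)|^2$ \emph{exactly}. Your worry about ``wraparound sets of size $O(N/p)$'' is a red herring.

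\textbf{First gap.} You write that ``$g$ extends to a completely multiplicative $\pm 1$-valued function on the primes --- i.e.\ essentially a real Dirichlet character $\psi$ mod $N$''. This conflates two different objects. Extending $g$ multiplicatively gives a function on $N^c$-smooth integers, and one can assign to each residue $b$ a value $g(P_b)$ via a chosen representative $P_b\equiv b\pmod N$; but $g(P_b)$ could a priori depend on the choice of $P_b$, and even if well-defined need not be a homomorphism in $b$. The paper resolves this on the Fourier side: it first shows (its Lemma~3.4) that $S_f(a)\approx \tau g(P_a)\sqrt N$ for typical $a$ and some fixed $\tau\in S^1$, which forces $g(P_a)$ to be essentially a function of $a$ alone; then, crucially, it \emph{chooses} $P_b$ to be a perfect square whenever $b$ is a quadratic residue (so $g(P_b)=+1$ on half of the residues), and combines this with the Gauss-sum identity $S_\chi(a)=\chi(a)S_\chi(1)$ and Plancherel to compute $\sum_{n<N}f(n)\chi(n)$ directly. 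Your physical-side sketch does not supply any mechanism for pinning down \emph{which} character $\psi$ arises, nor for showing $b\mapsto g(P_b)$ is well-defined.

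\textbf{Second gap.} Your argument for the ``Moreover'' clause only shows that each bad prime satisfies $p\gg\sqrt M$; it does not give $\sum_{p\text{ bad}}1/p\ll M^{-1/4}$. The hand-wave about ``products of two bad primes are good'' and ``partial summation'' does not close this. In the paper this is a standalone lemma (its Lemma~3.5): if $|\{n\le N:f(n)\ne h(n)\}|\le N/T$ then $\sum_{f(p)\ne h(p)}1/p\ll (\min\{T,\log N\})^{-1/2}$. The proof is not the swap argument you describe; it estimates the weighted sum $\sum_{n\le N}(1-f(n)h(n))\Omega_{\mc B}(n)$ from above (splitting on $\Omega_{\mc B}(n)\le K$ and using $\sum_n\Omega_{\mc B}(n)^2\ll N$) and from below (via a sieve lower bound for integers free of bad primes), then optimises in $K$. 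Since $f$ is only multiplicative (not completely multiplicative), the naive ``$f(n)/\psi(n)=(-1)^{\Omega_{\mc B}(n)}$'' identity you seem to rely on fails on prime powers, and some device like this is needed.
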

As an application we obtain the following simple corollary of Theorem \ref{thm:dil} when applied to the Liouville function $\lambda$.
\begin{cor}\label{cor:Liou}
Let $c > 1/4$ and let $N$ be a large prime. If there is a $\pm 1$-valued sequence $(g(p))_{p \leq N^c}$ such that as $N \ra \infty$,
\begin{equation}\label{eq:Lioucond}
\max_{p \leq N^c} \frac{1}{N} \sum_{a \pmod{N}} \left|S_\lambda(ap) -g(p)S_\lambda(a)\right|^2  = o(N),
\end{equation}
then the Dirichlet $L$-function $L(s,(\tfrac{\cdot}{N}))$ has a real zero $\beta$ with $0 < 1-\beta = O(\tfrac{1}{\log N})$. 
\end{cor}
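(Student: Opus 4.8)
The plan is to apply Theorem~\ref{thm:dil} to $f = \lambda$ and then to invoke the classical correspondence between ``$\chi_N$ pretending to be the Liouville function up to its own conductor'' and the existence of a Siegel zero for $L(s,\chi_N)$. First note that $\lambda$ is $\pm 1$-valued and multiplicative, and that \eqref{eq:Lioucond} is exactly hypothesis \eqref{eq:unifSf} with $f = \lambda$ and right-hand side $o(N)$; hence \eqref{eq:unifSf} holds with some $M = M(N) \to \infty$ (for instance $M(N) = \min\{N, \lceil N/(E(N)+1)\rceil\}$, with $E(N)$ the left side of \eqref{eq:Lioucond}). Since $M_0$ is absolute, for $N$ large the ``Moreover'' part of Theorem~\ref{thm:dil} applies and yields a real character $\psi$ modulo $N$ with
$$
\sum_{\ss{p < N \\ \lambda(p) \neq \psi(p)}} \frac{1}{p} \ll \frac{1}{\min\{M(N)^{1/4}, \sqrt{\log N}\}} = o(1) .
$$
Because $N$ is prime, $(\Z/N\Z)^{\times}$ is cyclic, so the only real Dirichlet characters modulo $N$ are the principal character $\chi_0$ and the Legendre symbol $\chi = \chi_N$; and $\psi = \chi_0$ is impossible, since $\lambda(p) = -1 \neq 1 = \chi_0(p)$ for all primes $p < N$ would force $\sum_{p < N} 1/p = o(1)$, contradicting Mertens' theorem. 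Hence $\psi = \chi_N$, so that
$$
\sum_{\ss{p < N \\ \chi_N(p) = +1}} \frac{1}{p} = o(1) ,
$$
i.e.\ $\chi_N(p) = -1$ for all primes $p < N$ outside a set $\mc{E}$ with $\sum_{p\in\mc{E}} 1/p = o(1)$; in other words $\chi_N$ is $\lambda$-pretentious on $[1,N]$.

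It remains to deduce that this pretentiousness forces $L(s,\chi_N)$ to have a real zero $\beta$ with $0 < 1-\beta \ll 1/\log N$ --- the standard exceptional-zero mechanism. Combining the Mertens estimates $\sum_{p < N} \frac{\log p}{p} = \log N + O(1)$ and $\sum_{p < N}\frac{(\log p)^{2}}{p} = \tfrac{1}{2} (\log N)^{2} + O(\log N)$ with the pretentiousness just established (the set $\mc{E}$ contributing $2\sum_{p\in\mc{E}} \frac{\log p}{p}\bigl(1 - \tfrac{\log p}{\log N}\bigr) = o(\log N)$) gives the Fej\'er-weighted sum
$$
\sum_{p < N} \frac{\chi_N(p)\log p}{p}\left(1 - \frac{\log p}{\log N}\right) = -\frac{\log N}{2} + o(\log N) .
$$
By the truncated (Fej\'er-kernel) Perron formula the left side equals, up to $O(1)$ from prime powers,
$$
\frac{-1}{2\pi i\,\log N}\int_{(c)} \frac{L'}{L}(s+1,\chi_N)\,\frac{N^{s}}{s^{2}}\,ds ,
$$
so this integral has size $\asymp (\log N)^{2}$. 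Moving the contour to the left of $s = 0$, the residue of the integrand at a zero $\rho = \beta + i\gamma$ of $L(\cdot,\chi_N)$ is a multiple of $N^{\rho-1}/(\rho-1)^{2}$, of modulus $\ll N^{-(1-\beta)}/\bigl((1-\beta)^{2} + \gamma^{2}\bigr)$; this is of size $(\log N)^{2}$ only when $|\rho - 1| \asymp 1/\log N$. The de la Vall\'ee Poussin zero-free region, together with the standard density estimate for the zeros of $L(s,\chi_N)$, shows that the remaining contributions --- the double pole at $s = 0$, the zeros with $\mathrm{Re}(\rho) \le 1 - c_{0}/\log N$, and the shifted contour --- cannot by themselves produce a quantity of size $(\log N)^{2}$. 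Hence $L(s,\chi_N)$ must have a zero $\rho$ with $|\rho - 1| \ll 1/\log N$; by de la Vall\'ee Poussin's result on exceptional zeros such a $\rho$ is necessarily real, so $\rho = \beta$ with $0 < 1-\beta \ll 1/\log N$, as claimed. (Alternatively, this last step can be read off from the known quantitative comparisons between the pretentious distance $\mb{D}(\chi_N, \mu; N)$ and the largest real zero of $L(s,\chi_N)$.)

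The entire difficulty is concentrated in this last step: the reductions to Theorem~\ref{thm:dil} and to $\psi = \chi_N$ are routine, while converting strong pretentiousness of $\chi_N$ up to $N$ into the sharp localisation of the zero genuinely requires the analytic theory of $L(s,\chi_N)$ and careful attention to uniformity in the growing modulus. In particular, the prime-number theorem for $L(s,\chi_N)$ provides no saving over the trivial bound on the ``intermediate'' range $n \in [N, N^{1+o(1)}]$, so a naive argument of the shape ``pretentiousness contradicts the prime-number theorem'' fails; this forces one to work with a test sum supported on $[1,N]$, as above, and then to control, via the explicit formula, the contribution of all zeros of $L(s,\chi_N)$ of bounded height.
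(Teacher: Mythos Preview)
Your reduction to Theorem~\ref{thm:dil} and the elimination of the principal character are correct and match the paper's argument; both you and the paper arrive at the statement that $\chi_N$ pretends to be $\lambda$ on $[1,N]$, in the sense that $\sum_{p<N}(1+\chi_N(p))/p = O(1)$.

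The difference is in the final step, and here your primary argument has a genuine gap. When you shift the contour in
\[
\frac{-1}{2\pi i}\int_{(c)} \frac{L'}{L}(s+1,\chi_N)\,\frac{N^{s}}{s^{2}}\,ds
\]
past $s=0$, the double pole there contributes the residue
\[
\frac{L'}{L}(1,\chi_N)\log N \;+\; \Bigl(\frac{L'}{L}\Bigr)'(1,\chi_N),
\]
and the first term is \emph{not} negligible: one only has $\tfrac{L'}{L}(1,\chi_N)=O(\log N)$ in general, so this residue can itself be of size $(\log N)^{2}$. Your assertion that ``the double pole at $s=0$ \dots\ cannot by themselves produce a quantity of size $(\log N)^{2}$'' is therefore unjustified, and in fact it is precisely in the Siegel-zero situation that $\tfrac{L'}{L}(1,\chi_N)\asymp\log N$. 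So the contradiction you are aiming for does not follow from the contour shift as written; one would need an independent bound on $\tfrac{L'}{L}(1,\chi_N)$ under the hypothesis ``no zero near $1$'', which is essentially equivalent to what you are trying to prove.

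The paper instead takes the route you mention parenthetically: from $\sum_{p<N}\chi_N(p)/p=-\log\log N+O(1)$ one obtains $L(1,\chi_N)\asymp 1/\log N$ directly via the Euler product (the paper handles the needed uniformity by first disposing of the case that $N$ is already exceptional, then invoking \cite[(4.3)]{GraMan}), and then applies Hecke's classical result that $L(1,\chi_N)\ll 1/\log N$ forces a real zero $\beta$ with $1-\beta\ll 1/\log N$. This avoids the explicit formula entirely and is both shorter and rigorous. Your parenthetical reference to ``known quantitative comparisons between $\mb{D}(\chi_N,\mu;N)$ and the largest real zero'' is exactly this mechanism, so you should promote it to the main argument and drop the Fej\'er--explicit-formula sketch.
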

In other words, 
if one could prove a zero-free region for $L(s,(\tfrac{\cdot}{N}))$ of the shape
$$
\{s = \sg + it : \, \sg \geq 1-\tfrac{C}{\log N}, \, |t| \leq N\}
$$
with any \emph{sufficiently large} constant $C > 0$ then \eqref{eq:Lioucond} is not possible. 
\subsection{A conditional extension}
If we are willing to assume that the Dirichlet $L$-functions modulo $N$ have a suitably large zero-free region then the constraint $c > 1/4$ may be relaxed, and we may obtain a necessary and sufficient characterisation of functions with the property \eqref{eq:unifSf}. However, rather than \eqref{eq:unifSf} holding for \emph{all} $p \leq N^c$, the required condition applies to primes outside of a \emph{very sparse} set. \\
In the sequel, given $A > 0$ we say that $N$ is of \emph{$A$-Littlewood type} if 
all of the $L$-functions associated to non-principal Dirichlet characters modulo $N$ are non-vanishing in the region
\begin{equation}\label{eq:LWZFR}
\left\{s \in \mb{C} : \text{Re}(s) > 1-\frac{A \log\log N}{\log N}, \, |\text{Im}(s)| \leq N\right\}.
\end{equation}
\begin{thm}\label{thm:iff}
Let $N$ be a large prime, let $1 \leq M \leq N$ and let $f: \mb{N} \ra \{-1,+1\}$ be multiplicative. \\
(a) If there is a real character $\psi \pmod{N}$ such that the estimate
\begin{equation}\label{eq:almostall}
|\{n < N : f(n) \neq \psi(n)\}| \ll \frac{N}{M}.
\end{equation}
holds then there is a set $\mc{A} = \mc{A}_f \subseteq \mb{P} \cap (0,N)$ and a $\pm 1$-valued sequence $(g(p))_{p < N}$ such that 
\begin{align*}
&(i)\, \max_{p \in \mc{A}} \frac{1}{N} \sum_{a \pmod{N}} \left|S_f(ap) - g(p)S_f(a)\right|^2 \ll \frac{N}{M}, \\
&(ii)\, \sum_{\ss{p < N \\ p \notin \mc{A}}} \frac{1}{p} \ll \frac{1}{\sqrt{\min\{M,\log N\}}}.
\end{align*} 
(b) Let $A > 0$ and $c \in (0,1)$. Assume that $A$ and $M$ are both sufficiently large with respect to $c$, and that $N$ is a large prime of $A$-Littlewood type. Finally, suppose there exists $\mc{A} \subseteq \mb{P} \cap (0,N)$ and a $\pm 1$-valued sequence $(g(p))_{p \leq N^c}$ such that 
\begin{align} \label{eq:genA}
&\max_{\ss{p \in \mc{A} \cap [2,N^c]}} \frac{1}{N} \sum_{a \pmod{N}} |S_f(ap)-g(p)S_f(a)|^2 \ll \frac{N}{M}, \\
&\sum_{\ss{p < N \\ p \notin \mc{A}}} \frac{1}{p} \ll \frac{1}{\sqrt{\min\{M,\log N\}}}.  \nonumber
\end{align}
Then there is a real character $\psi$ such that $|\{n < N : f(n) \neq \psi(n)\}| \ll N/\sqrt{M}$.
\end{thm}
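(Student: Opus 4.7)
Let $E := \{n<N : f(n)\ne\psi(n)\}$ and $R(a) := S_f(a)-S_\psi(a) = \sum_{n\in E}(f(n)-\psi(n))e(an/N)$. By Parseval,
\[\frac{1}{N}\sum_{a\pmod N}|R(a)|^2 = \sum_{n\in E}|f(n)-\psi(n)|^2 \leq 4|E| \ll \frac{N}{M}.\]
For any prime $p<N$ one has $\gcd(p,N)=1$, so $a\mapsto ap$ permutes $\Z/N\Z$ and $\tfrac1N\sum_a|R(ap)|^2 = \tfrac1N\sum_a|R(a)|^2$. Since $\psi$ is a Dirichlet character modulo $N$, $S_\psi(ap) = \psi(p)S_\psi(a)$ for every prime $p<N$. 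The identity $S_f(ap)-\psi(p)S_f(a) = R(ap)-\psi(p)R(a)$ together with the triangle inequality yield $\tfrac1N\sum_a|S_f(ap)-\psi(p)S_f(a)|^2 \ll N/M$ for every prime $p<N$. Taking $\mc{A}=\mb{P}\cap(0,N)$ and $g(p)=\psi(p)$ makes (i) hold and renders (ii) vacuous.

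\textbf{Part (b), plan.} The strategy mirrors the proof of Theorem \ref{thm:dil}, with two new ingredients: the $A$-Littlewood zero-free region \eqref{eq:LWZFR} is used to allow $c$ to be arbitrarily small, and the set $\mc{A}$ is handled by exploiting the logarithmic sparsity of its complement. Since $N$ is prime, first expand via characters modulo $N$:
\[S_f(a) = \frac{1}{N-1}\sum_{\chi\pmod N}\tau(\chi)\bar\chi(a)M_f(\bar\chi), \qquad M_f(\chi):=\sum_{n<N}f(n)\chi(n),\]
with $|\tau(\chi)|=\sqrt N$ for non-principal $\chi$. Plancherel on $(\Z/N\Z)^\ast$ applied to \eqref{eq:genA}, modulo negligible contributions from the principal character and $a=0$, gives for each $p\in\mc{A}\cap[2,N^c]$,
\[\sum_{\chi\ne\chi_0}\frac{|M_f(\bar\chi)|^2}{N-1}|\chi(p)-g(p)|^2 \ll \frac{N^2}{M}.\]
Averaging over $p\in\mc{A}\cap[2,N^c]$ with weight $1/p$, and using the sparsity of $\mb{P}\setminus\mc{A}$ to replace $\mc{A}\cap[2,N^c]$ by $[2,N^c]$ with acceptable loss, produces
\[\sum_{\chi\ne\chi_0}\frac{|M_f(\bar\chi)|^2}{N^2}D(\chi,g;N^c)^2 \ll \frac{\log(c\log N)}{M},\]
where $D(\chi,g;y)^2 := \sum_{p\le y}(1-\mathrm{Re}(\chi(p)g(p)))/p$ is the pretentious distance of Granville--Soundararajan.

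\textbf{Identifying $\psi$ and main obstacle.} I would next invoke the quantitative Halász theorem under the $A$-Littlewood hypothesis, which yields $|M_f(\bar\chi)|/N \ll \exp(-\alpha D(f,\chi;N)^2) + (\log N)^{-A/2}$ for non-principal $\chi$ and some absolute $\alpha>0$. Consequently, only characters $\chi$ that pretend to $\bar g$ on primes up to $N^c$ can contribute non-negligibly above. Crucially, the $A$-Littlewood hypothesis guarantees that at most one non-principal character $\psi$ modulo $N$ can simultaneously be so ``resonant''; since $f\bar\psi$ is $\pm1$-valued, $\psi$ is necessarily real. Substituting back, the $\chi=\psi$ term in the main inequality forces $D(f,\psi;N)^2 \ll 1/M$, and a standard pretentious-to-pointwise conversion (using that $f\bar\psi$ is multiplicative and $\pm1$-valued, so that a small pretending distance translates into a small Chebyshev-type deficit) produces $|\{n<N : f(n)\ne\psi(n)\}|\ll N/\sqrt M$, the $\sqrt M$ reflecting an $L^2\to L^1$ loss. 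The \emph{main obstacle} is uniqueness of $\psi$ when $c$ is small: without \eqref{eq:LWZFR}, multiple characters could simultaneously pretend to $\bar g$ on primes below $N^c$, collapsing the identification step. The Littlewood-type zero-free region is tailored precisely to rule this out by forcing non-principal $L$-functions to be large in a wide strip.
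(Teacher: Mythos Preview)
Your argument is correct and is cleaner than the paper's. The paper takes $\mc{A} = \{p<N : f(p)=\psi(p)\}$ and invokes Lemma~\ref{lem:verySparse} to establish (ii); you simply take $\mc{A}=\mb{P}\cap(0,N)$ and $g(p)=\psi(p)$, which makes (ii) vacuous. Both are valid, and yours avoids the extra lemma entirely.

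\textbf{Part (b).} Your route is genuinely different from the paper's. The paper never expands $S_f$ in characters. Instead, it uses the $A$-Littlewood hypothesis to prove (via an adaptation of the Soundararajan--Harper method for friable integers in progressions, Proposition~\ref{prop:soundGen}) that every reduced residue class mod $N$ is represented by an integer with $O_c(1)$ prime factors, all drawn from $\mc{A}\cap[2,N^c]$ (Proposition~\ref{prop:bddOmega}). Telescoping then upgrades \eqref{eq:genA} to a dilation estimate over \emph{all} $b\in(\Z/N\Z)^\times$, after which Proposition~\ref{prop:LiouCorrel} applies verbatim.

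Your pretentious approach is natural, but as written it has real gaps. First, the ``acceptable loss'' when replacing $\mc{A}\cap[2,N^c]$ by $[2,N^c]$ is not acceptable: the per-character error is $O(\eta)$ with $\eta\asymp\min\{M,\log N\}^{-1/2}$, and since $\sum_\chi|M_f(\bar\chi)|^2/N^2\asymp 1$ by Parseval, the total error is $\asymp\eta$, which swamps the target $\log\log N/M$. (This is easily fixed: work with the $\mc{A}$-restricted distance throughout.) Second, the sentence ``the $\chi=\psi$ term forces $D(f,\psi;N)^2\ll 1/M$'' is a non sequitur: your displayed inequality involves $D(\chi,g;N^c)$, not $D(f,\psi;N)$. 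Third, the ``pretentious-to-pointwise conversion'' you invoke is false in general: taking $f(p)=-\psi(p)$ for $p\in(N^{0.9},N]$ and $f(p)=\psi(p)$ otherwise gives $D(f,\psi;N)^2\ll 1$ but $|\{n<N:f(n)\ne\psi(n)\}|\gg N$. Finally, ``$f\bar\psi$ is $\pm1$-valued, so $\psi$ is real'' is circular; the correct argument is that $|M_f(\bar\psi)|=|M_f(\psi)|$ (as $f$ is real), so uniqueness of the resonant character forces $\psi=\bar\psi$.

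The strategy can be salvaged: from your character inequality plus Littlewood-type uniqueness (any two distinct non-principal $\chi_1,\chi_2$ satisfy $D(\chi_1,\chi_2;N^c)^2\gg_c\log\log N$), Parseval forces $|M_f(\psi)|^2=N^2(1+O(1/M))$ for a single real $\psi$, and \emph{that} gives $|\{n<N:f(n)\ne\psi(n)\}|\ll N/M$ directly, without any distance-to-pointwise step. The endgame should be reorganised around Parseval, not around $D(f,\psi)$.
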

\subsection{Remarks on the main results}
\begin{rem}
By Plancherel's theorem $\pmod{N}$ and the Cauchy-Schwarz inequality, the trivial bound in \eqref{eq:unifSf} is $O(N)$, uniformly over $p \leq N^c$. Thus, provided that $M \ra \infty$, Theorem \ref{thm:dil} asserts that if $f$ produces \emph{any} non-trivial bound as in \eqref{eq:unifSf} then $f$ must be equal to a character at \emph{almost all} $n < N$. 
\end{rem}
\begin{rem} \label{rem:smallS0}
If $g(p) = -1$ for at least one prime $p \leq N^c$ it is implicit in \eqref{eq:unifSf} (setting $a = 0$) that
\begin{equation}\label{eq:0freq}
\left|\sum_{n < N} f(n)\right| = O(N/\sqrt{M}).
\end{equation}
In this case, provided $M$ is sufficiently large this evidently means that $\psi = \chi$ in the conclusion of Theorem \ref{thm:dil}. By the prime number theorem, this case holds for $f$ the Liouville function and $M \leq \exp(\sqrt{\log N})$, say, which is why our Corollary \ref{cor:Liou} relates to a property of $L(s,\chi)$.
\end{rem}
\begin{rem} Note that Theorem \ref{thm:iff} is indeed a (conditional) strengthening of Theorem \ref{thm:dil}. In the latter, we assert that \emph{there exists} a set $\mc{A} = \{p \leq N^c\}$ of primes for which \eqref{eq:almostall} follows from \eqref{eq:genA}.
\end{rem}
\begin{rem}
It is reasonable to ask whether some of the hypotheses in Theorem \ref{thm:dil} (and indeed Theorem \ref{thm:iff} as well) can be relaxed. For instance, one could ask whether the same result holds for more general multiplicative functions $f: \mb{N} \ra S^1$, with other $S^1$-valued sequences $(g(p))_{p < N}$, or with $N$ not necessarily prime. We believe these to be interesting generalisations, but as our methods do not suffice to handle them\footnote{The problem appears to have substantial additional challenges when e.g. $f$ and $g$ are assumed to take values in $d$th order roots of unity, for some $d \geq 3$. In order to obtain a rigid conclusion similar to that of Theorem \ref{thm:dil}, we would need to show that there is at most one $0 \leq j \leq d-1$ such that $f(n)\chi(n)^j$ can have a ``large'' partial sum of length $N$, taking $\chi$ to be a primitive character modulo $N$ of order $d$. This could be ruled out provided we knew that $\chi(p)$ oscillated significantly along primes $p < N$. Unfortunately, current techniques do not allow one to show this unconditionally.}, we have chosen to defer their treatment to another occasion. 
\end{rem} 
\section{From primes to all residue classes $\pmod{N}$}
In the sequel, write $P^+(n)$ and $P^-(n)$ to denote, respectively, the largest and smallest prime factors of $n$ (with the conventions $P^+(1) = P^-(1) = 1$). \\
In this section, we extend estimates for 
$$
\max_{p \in S} \frac{1}{N} \sum_{a \pmod{N}} |S_f(ap/N) - g(p)S_f(a/N)|^2,
$$
wherein dilation is by \emph{primes} $p$ from some set $S$, to yield estimates
$$
\max_{b \in (\mb{Z}/N\mb{Z})^\times}\frac{1}{N}\sum_{a \pmod{N}} |S_f(ab/N) - w_b S_f(a/N)|^2
$$
for some sequence $(w_b)_{b \in (\mb{Z}/N\mb{Z})^\times} \in \{-1,+1\}^{N-1}$ (see Proposition \ref{prop:PbSmooth} below). We will exploit these latter estimates in the next section. \\
To address both of our theorems, we will separately concentrate on conditional and unconditional results in the next two subsections.
\subsection{Unconditional results}
In this subsection we will prove the following.
\begin{prop} \label{prop:uncond}
Let $c > \const$ and let $N$ be a prime chosen sufficiently large relative to $c$. Then there are $K_1,K_2 \geq 1$, depending only on $c$, such that
$$
\min_{a \in (\mb{Z}/q\mb{Z})^{\times}} |\{n < N^{K_1} : P^+(n) \leq N^c, \, \Omega(n) \leq K_2, \, n \equiv a \pmod{N}\}| \geq 1.
$$
\end{prop}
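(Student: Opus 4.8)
The plan is to realize every residue class $a \pmod N$ by a smooth number with few prime factors, of controlled size, by combining two ingredients: (i) a lower bound for the number of $N^c$-smooth numbers in a short-ish interval (or rather, of a power of $N$), and (ii) the fact that such smooth numbers are sufficiently equidistributed in residue classes modulo the prime $N$, at least on average, so that no class is missed. Since we only need \emph{one} representative per class (the quantifier in the statement is $\min \geq 1$), it suffices to produce, for a suitable $K_1$, a set $\mathcal{S} \subseteq [1, N^{K_1})$ of $N^c$-smooth numbers with $\Omega(n) \le K_2$ whose reductions modulo $N$ cover all of $(\mathbb{Z}/N\mathbb{Z})^\times$.

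First I would fix the structure of the representatives. A natural choice is to take products $n = p_1 \cdots p_k$ of $k$ distinct primes, each in a dyadic range near $N^{c'}$ for some $c' < c$, so automatically $P^+(n) \le N^c$, $\Omega(n) = k \le K_2$, and $n \asymp N^{kc'} < N^{K_1}$ for $K_1 := \lceil k c' \rceil + 1$. The number of such products is, by standard prime-counting in dyadic intervals, of size $\gg (N^{c'}/\log N)^k$, which for $k$ large enough (depending on $c$) exceeds $N^{1+\delta}$ for some $\delta > 0$; in particular it is much larger than $N$. The remaining task is purely to show that these products hit every nonzero residue class mod $N$. Here I would use a character-sum / circle-method argument: for a nonprincipal character $\eta \pmod N$,
$$
\sum_{p_1, \ldots, p_k} \eta(p_1 \cdots p_k) = \left(\sum_{p \sim N^{c'}} \eta(p)\right)^k,
$$
and each inner sum is $O(N^{c'} \exp(-c_0 \sqrt{\log N}))$ by Siegel–Walfisz / the Vinogradov–Korobov bound for character sums over primes (or, more elementarily, by partial summation from the prime number theorem in arithmetic progressions — note $N$ is the modulus and $p \sim N^{c'}$ is much larger than any fixed power of the modulus only if $c' > 1$, so actually one should take the primes of size a fixed small power... ). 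The cleanest route avoiding Siegel zeros is to count \emph{prime factors of size $N^{c'}$ with $c' \in (0,c)$ arbitrary but $> 0$}, for which character sums $\sum_{p \le N^{c'}} \eta(p)$ are genuinely nontrivial only when $N^{c'}$ is a sufficiently large power of $N$; since $c' < 1$ this is the regime where one needs the full strength of Burgess or assumes nothing and instead uses the pigeonhole/additive-combinatorics approach below.

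Given the subtlety above, the approach I would actually carry out is additive-combinatorial rather than analytic. Let $P$ be the set of primes in $(N^{c/2}, N^c]$, so $|P| \gg N^{c/2}/\log N$, and consider the multiset of residues $\{p \bmod N : p \in P\} \subseteq (\mathbb{Z}/N\mathbb{Z})^\times$; call its underlying set $B$, with $|B| \gg N^{c/2}/\log N$ (distinct primes below $N$ give distinct residues). By Plünnecke–Ruzsa or simply the Cauchy–Davenport / Kneser theorem in the prime-order group $\mathbb{Z}/N\mathbb{Z}$, the $k$-fold product set $B \cdot B \cdots B$ (which consists of residues of $N^c$-smooth numbers with $\le k$ prime factors, times a sign ambiguity) has size $\ge \min\{N, k(|B|-1) + 1\}$ for additive sets — but here we need the \emph{multiplicative} structure, so I would instead pass through the group isomorphism $(\mathbb{Z}/N\mathbb{Z})^\times \cong \mathbb{Z}/(N-1)\mathbb{Z}$ via a primitive root, turning products of primes into sums of the discrete logarithms of those primes. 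Then the question becomes: does the $k$-fold sumset of the logarithm-set $L := \{\log_g p : p \in P\} \subseteq \mathbb{Z}/(N-1)\mathbb{Z}$ cover everything? Since $|L| \gg N^{c/2}/\log N \gg (N-1)^{c/2 - o(1)}$, Kneser's theorem shows that either the sumset grows by a factor $\sim k$ (so covers the whole group once $k \gg N^{1-c/2+o(1)}$ — too large) or it is trapped in a coset of a proper subgroup $H \le \mathbb{Z}/(N-1)\mathbb{Z}$. The main obstacle, and the crux of the proof, is ruling out this second "subgroup obstruction": one must show that the primes $p \le N^c$ do not all have discrete logarithm in a single coset of a proper subgroup of $(\mathbb{Z}/N\mathbb{Z})^\times$ — equivalently, that they are not all $d$-th power residues (or a fixed coset thereof) for some $d \mid N-1$, $d > 1$. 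This is exactly a statement about the equidistribution of $\{\chi(p) : p \le N^c\}$ over $d$-th order characters $\chi$, and it is precisely where the hypothesis $c > 1/4$ enters: by Burgess's bound, $\sum_{p \le N^c} \chi(p) = o(\pi(N^c))$ for every nonprincipal $\chi \pmod N$ once $c > 1/4$, so no coset of a proper subgroup can contain a positive proportion of primes up to $N^c$, let alone all of them. Combining this non-degeneracy with Kneser's theorem (applied with a bounded number $k = K_2$ of summands, using that $|L|$ is a fixed positive power of $N-1$, so $k \cdot |L|$ already exceeds $N-1$ for $K_2 = \lceil 2/c \rceil + 1$, say) yields that the $K_2$-fold product set is all of $(\mathbb{Z}/N\mathbb{Z})^\times$, which is the desired conclusion with $K_1 = \lceil K_2 c \rceil + 1$. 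I expect the bookkeeping around Kneser's theorem (ensuring the sumset is not stuck in a coset, and quantifying $k$) to be the delicate part; the Burgess input is standard but essential, and is the reason the unconditional theorem is limited to $c > 1/4$.
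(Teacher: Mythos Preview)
Your proposal has a genuine gap at the Kneser step, and a secondary unjustified claim about Burgess.

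\medskip

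\textbf{The Kneser step fails quantitatively.} You write that ``$k\cdot|L|$ already exceeds $N-1$ for $K_2=\lceil 2/c\rceil+1$''. But $|L|$ is the number of primes in $(N^{c/2},N^c]$, so $|L|\asymp N^c/\log N$ (not $N^{c/2}/\log N$ as you wrote, though this is immaterial). Kneser's theorem in $\mb{Z}/(N-1)\mb{Z}$ gives only \emph{linear} growth: once the subgroup obstruction is ruled out one gets $|kL|\ge k|L|-O(k)$, and $k\cdot N^c\ge N-1$ forces $k\gg N^{1-c}$, which is unbounded. The formula $K_2=\lceil 2/c\rceil+1$ is consistent with $|L|^k\ge N$, i.e.\ with counting \emph{ordered tuples}, but that is not what Kneser controls; having $|L|^k\gg N$ only tells you (by pigeonhole) that some class is hit many times, not that every class is hit. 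So as written, the argument cannot produce a bounded $K_2$.

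\medskip

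\textbf{Burgess does not directly give cancellation in prime sums.} You invoke ``Burgess's bound'' for $\sum_{p\le N^c}\chi(p)=o(\pi(N^c))$. Burgess controls $\sum_{n\le X}\chi(n)$ for $X\ge N^{1/4+\delta}$; passing from integers to primes when $X$ is a small power of the modulus is a separate (and substantially harder) problem. For the mere ``not trapped in a coset'' statement one does not need this strong estimate, but you also use it implicitly to drive the Kneser growth, and even granting it the linear-growth issue above remains.

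\medskip

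\textbf{What the paper does instead.} The paper (following Walker) bypasses both problems by a second-moment argument that uses Burgess only on \emph{integer} sums. One bounds
\[
M_k:=|\{p_1,\dots,p_{2k}<N^c:\ p_1\cdots p_k\equiv p_{k+1}\cdots p_{2k}\pmod N\}|
\]
from above by replacing the primality constraint with an upper-bound sieve weight $w=1\ast\lambda$ of level $D=N^{\alpha}$, expanding via character orthogonality, and applying Burgess to each $\sum_{m<X/d}\xi(m)$. For $k\gg_\delta 1$ this yields $M_k\ll_\delta q^{-1}(X/\log X)^{2k}$, and Cauchy--Schwarz then gives $|P_\eta^{(k)}|\gg_\delta q$, i.e.\ the $k$-fold product set already has \emph{positive density}. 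From positive density one reaches the full group in $O_\delta(1)$ further steps via a Freiman-type doubling lemma (either $|S^{(2)}|\ge\tfrac32|S|$ or $S^{(4)}=G$), together with Walker's observation that $P_\eta$ is not contained in a coset of a proper subgroup. The crucial idea you are missing is this intermediate ``positive proportion'' step, which converts the power-saving from Burgess on integers into density, after which geometric (not linear) growth finishes the job.
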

To this end we will use some ideas that are heavily inspired by a paper of Walker \cite{Walker}. In the sequel, given a finite abelian group $G$, a subset $S \subseteq G$ and $k \geq 1$ we write
$$
S^{(k)} := \{s_1\cdots s_k : \, s_j \in S \text{ for all } 1 \leq j \leq k\}.
$$ 
We will be interested in particular in the case $G = (\mb{Z}/q\mb{Z})^\times$. Given $\eta \in (0,1)$ we let $P_{\eta}$ denote the set of $a \in (\mb{Z}/q\mb{Z})^{\times}$ that are covered by primes $p < \eta q$. Then
$$
P_{\eta}^{(k)} = \{a \pmod{q} : \exists \, p_1,\ldots,p_k < \eta q, \, p_1p_2\cdots p_k \equiv a \pmod{q}\}.
$$
We prove the following result, which improves upon \cite[Thm. 2(ii)]{Walker}.
\begin{prop}\label{prop:impWalker}
Let $\delta > 0$, let $q$ be a prime chosen sufficiently large relative to $\delta$, and let $\eta = q^{-3/4+\delta}$. Then there is $K = O_{\delta}(1)$ such that $P_{\eta}^{(K)} = (\mb{Z}/q\mb{Z})^{\times}$. 
\end{prop}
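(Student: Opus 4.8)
The plan is to prove Proposition \ref{prop:impWalker} via a two-stage argument: first obtain that a bounded number of products of primes $p < \eta q$ already fills a positive proportion of $(\mb{Z}/q\mb{Z})^\times$ (or at least a large, structured subset), and then use an $L^2$/additive-energy bootstrap to pass from a dense subset to the whole group after a further bounded number of multiplications. The key analytic input for the first stage is a lower bound, uniform over residue classes $a \pmod q$, for the number of primes $p < \eta q$ with $p \equiv a$; but since $\eta q = q^{1/4+\delta}$ is well below the range where the prime number theorem in arithmetic progressions modulo $q$ is available unconditionally, we cannot count primes in individual classes. Instead, following Walker \cite{Walker}, the idea is to work with the multiset $\{p \bmod q : p < \eta q\}$ in aggregate and study the distribution of $k$-fold products through Fourier analysis on $(\mb{Z}/q\mb{Z})^\times$: writing $r_k(a)$ for the number of representations $p_1 \cdots p_k \equiv a$, one has
\[
r_k(a) = \frac{1}{q-1}\sum_{\chi \bmod q} \bar\chi(a)\left(\sum_{p < \eta q}\chi(p)\right)^k,
\]
so the support of $r_k$ is exactly $P_\eta^{(k)}$, and one wants to show this support is everything for $k = K$.

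The main step is therefore to control the character sums $\Sigma(\chi) := \sum_{p<\eta q}\chi(p)$. The principal character contributes $\pi(\eta q) \sim \eta q/\log q$, which is the ``main term'' of size $\asymp q^{1/4+\delta}$. For non-principal $\chi$, the plan is to invoke a multiplicative large sieve / mean-value bound: by the large sieve inequality (or Montgomery–Vaughan), $\sum_{\chi \bmod q}|\Sigma(\chi)|^2 \ll (q + \eta q)\,\eta q/\log(\eta q) \ll q \cdot q^{1/4+\delta}$, so ``on average'' $|\Sigma(\chi)| \ll q^{5/8+\delta/2}$, which is smaller than the main term $q^{1/4+\delta}$ raised to... — here is the crux: one needs $k$ large enough that $|\Sigma(\chi)|^k$ is genuinely negligible after dividing by $(q-1)$ and summing, i.e. one needs a pointwise bound $|\Sigma(\chi)| = o\!\big(\pi(\eta q)\big)$ for \emph{every} non-principal $\chi$, which the $L^2$ bound does not directly give. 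The resolution — and this is where the improvement over \cite[Thm.~2(ii)]{Walker} presumably comes from — is to feed the $L^2$ bound into a higher-moment or Hölder argument: estimate $\sum_{\chi \neq \chi_0}|\Sigma(\chi)|^{2k}$ by combining the $L^2$ bound with the trivial bound $|\Sigma(\chi)| \leq \pi(\eta q)$, obtaining
\[
\frac{1}{q-1}\sum_{\chi \neq \chi_0}|\Sigma(\chi)|^{2k} \ll \frac{1}{q}\,\pi(\eta q)^{2k-2}\sum_{\chi}|\Sigma(\chi)|^2 \ll \frac{\pi(\eta q)^{2k-2}\cdot q^{5/4+\delta}}{q} = \pi(\eta q)^{2k-2}\, q^{1/4+\delta},
\]
and comparing with the main-term contribution $\pi(\eta q)^{2k}/(q-1)$. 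A short computation (using $\pi(\eta q) \asymp q^{1/4+\delta}/\log q$) shows the error beats the main term once $2k - (2k-2) = 2$ ``copies'' of $\pi(\eta q) \asymp q^{1/4+\delta}$ outgrow $q \cdot q^{1/4+\delta}$, i.e. once $q^{2(1/4+\delta)} \gg q^{1+1/4+\delta}/\text{(log powers)}$ — which fails for a single product but succeeds after taking $k$ products, since each extra product multiplies the main term by $\pi(\eta q) \asymp q^{1/4+\delta}$ while the error grows the same way; so choosing $K$ with $K(1/4+\delta) > 1$, i.e. $K = \lceil (1/4+\delta)^{-1}\rceil + O(1) = O_\delta(1)$, forces $r_K(a) > 0$ for all $a$.

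The main obstacle I expect is precisely the passage from an $L^2$ (or mean-square) estimate for $\Sigma(\chi)$ to a conclusion about the \emph{support} of $r_K$ for \emph{every} residue class: one must be careful that the Hölder/amplification step is set up so that the number of products $K$ stays bounded in terms of $\delta$ only, and that no single ``bad'' character $\chi$ with $|\Sigma(\chi)|$ close to $\pi(\eta q)$ can conspire to annihilate $r_K(a)$ for some $a$ — this is handled by noting that the total mass $\sum_a r_K(a) = \pi(\eta q)^K$ is concentrated in the $\chi_0$ term up to a power-saving error, so each $r_K(a) = \pi(\eta q)^K/(q-1) + O(\text{error}) > 0$. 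A secondary point requiring care is the choice of exponent $\eta = q^{-3/4+\delta}$ (equivalently $\eta q = q^{1/4+\delta}$): this is exactly the threshold at which $K = K(\delta)$ remains finite, and it is the source of the $c > 1/4$ restriction in Theorem \ref{thm:dil}; pushing $\delta \downarrow 0$ makes $K \to \infty$, which is why the unconditional result stops at $1/4$. Finally, Proposition \ref{prop:uncond} follows by applying Proposition \ref{prop:impWalker} with $q = N$ and $\delta$ chosen so that $-3/4 + \delta$ corresponds to the range $p \leq N^c$ (taking $c = 1/4 + \delta$), reading off $K_1 = K$ and $K_2 = K$ from the construction, since each element of $P_\eta^{(K)}$ is represented by an integer $n = p_1 \cdots p_K < (\eta q)^K = N^{K(1/4+\delta)} = N^{Kc}$ with $P^+(n) \leq N^c$ and $\Omega(n) \leq K$.
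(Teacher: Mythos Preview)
Your approach has a genuine gap: the large sieve combined with the trivial bound $|\Sigma(\chi)|\le \pi(\eta q)$ can \emph{never} make the error beat the main term, regardless of how large $k$ is. Indeed, your own bound gives
\[
\frac{1}{q-1}\sum_{\chi\ne\chi_0}|\Sigma(\chi)|^{2k}\ \ll\ \pi(\eta q)^{2k-2}\,q^{1/4+\delta},
\]
while the main term is $\pi(\eta q)^{2k}/(q-1)$. The ratio of error to main term is therefore
\[
\asymp \frac{q\cdot q^{1/4+\delta}}{\pi(\eta q)^2}\ \asymp\ q^{3/4-\delta}(\log q)^2,
\]
which is independent of $k$ and always $\gg 1$. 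Your remark that ``each extra product multiplies the main term by $\pi(\eta q)$ while the error grows the same way'' is precisely the problem: if both grow the same way then the ratio does not improve, and the condition $K(1/4+\delta)>1$ you arrive at is not supported by the preceding inequality. The underlying issue is that the large sieve only controls the $L^2$-mass of $\Sigma(\chi)$; it gives no nontrivial \emph{pointwise} bound for a fixed $\chi$, and without a pointwise power saving the H\"older amplification is vacuous.

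The paper proceeds quite differently. First, it does not attempt to prove $r_K(a)>0$ for every $a$ directly. Instead it shows, via Cauchy--Schwarz, that $|P_\eta^{(k)}|\ge \pi(\eta q)^{2k}/M_k$, where $M_k$ is the number of solutions of $p_1\cdots p_k\equiv p_{k+1}\cdots p_{2k}\pmod q$, and then proves $M_k\ll_\delta q^{-1}(X/\log X)^{2k}$ for $k\gg 1/\delta^2$. The crucial point is that $M_k$ is bounded not via the large sieve but via \emph{Burgess' theorem}: one replaces $1_{\mathbb{P}}$ by an upper-bound sieve weight $w=1\ast\lambda$ supported on all integers, so that each twisted sum $\sum_{n<X}w(n)\chi(n)$ is controlled pointwise by Burgess (Lemma~\ref{lem:Burgcor}), yielding a genuine power saving $Xq^{-c\delta^2}$ for every $\chi\ne\chi_0$. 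This step --- converting a sum over primes into one over sieved integers so that Burgess applies --- is exactly what your large-sieve argument lacks. Second, having only $|P_\eta^{(k)}|\gg_\delta q$ (positive density, not full coverage), the paper invokes Freiman's $3/2$-growth lemma (Lemma~\ref{lem:Frei}) iteratively, together with the fact (from \cite{Walker}) that $P_\eta$ is not contained in a coset of a proper subgroup, to conclude that $P_\eta^{(K)}=(\mathbb{Z}/q\mathbb{Z})^\times$ for some $K=O_\delta(1)$. Both the Burgess input and the Freiman bootstrap are absent from your plan and are essential.
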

The proof of this proposition follows a strategy of Walker \cite{Walker}, and will require several steps. First, we recall the following consequence of Burgess' theorem.
\begin{lem}\label{lem:Burgcor}
Let $\delta \in (0,1/4)$, and let $q$ be a prime that is sufficiently large in terms of $\delta$. Let $\xi$ be a non-principal character modulo $q$, and let $X \geq q^{1/4+\delta}$. Then
$$
\left|\sum_{n \leq X} \xi(n) \right| \ll_{\delta} Xq^{-\tfrac{1}{2}\delta^2}.
$$
\end{lem}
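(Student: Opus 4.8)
The final statement is Lemma~\ref{lem:Burgcor}, a standard consequence of Burgess's character sum bound.

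\medskip

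The plan is to derive this from Burgess's inequality in its usual form. Recall that Burgess's theorem states that for a non-principal character $\xi$ modulo a prime $q$, any integer $r \geq 1$, and any $X \geq 1$,
$$
\left|\sum_{n \leq X} \xi(n)\right| \ll_r X^{1-1/r} q^{\frac{r+1}{4r^2} + o(1)}.
$$
First I would fix $X \geq q^{1/4+\delta}$ and aim to choose the parameter $r = r(\delta)$ large enough (as a function of $\delta$ only) that the right-hand side is $\ll_\delta X q^{-\delta^2/2}$. Writing $X = q^{\theta}$ with $\theta \geq 1/4 + \delta$, the bound above is $\ll_r X \cdot q^{-\theta/r + \frac{r+1}{4r^2} + o(1)}$, so it suffices to secure
$$
\frac{\theta}{r} - \frac{r+1}{4r^2} - o(1) \;\geq\; \frac{\delta^2}{2}.
$$
Since $\theta \geq 1/4+\delta$, the left-hand side is at least $\frac{1/4+\delta}{r} - \frac{r+1}{4r^2} - o(1) = \frac{1}{4r} - \frac{1}{4r} - \frac{1}{4r^2} + \frac{\delta}{r} - o(1) = \frac{\delta}{r} - \frac{1}{4r^2} - o(1)$; choosing $r = \lceil 1/\delta \rceil$ (so $r \asymp_\delta 1$) makes this at least $\delta^2 - \delta^2/4 - o(1) = \tfrac{3}{4}\delta^2 - o(1) \geq \tfrac{1}{2}\delta^2$ once $q$ is large enough in terms of $\delta$ to absorb the $o(1)$. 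This gives the claimed estimate, with the implied constant depending only on $r$, hence only on $\delta$.

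\medskip

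There is essentially no obstacle here: the only mild subtlety is bookkeeping the $q^{o(1)}$ factor in Burgess's bound against the saving $q^{-\delta^2/2}$, which is why the lemma is stated with $q$ sufficiently large relative to $\delta$ — one simply needs $q^{o(1)} \leq q^{\delta^2/4}$, valid for all large $q$. One should also note the range restriction $\delta \in (0,1/4)$ keeps the exponent $1/4 + \delta$ genuinely below $1/2$ (where the trivial bound and Pólya–Vinogradov already suffice and Burgess is not needed), and ensures $r = \lceil 1/\delta\rceil \geq 5$, so the Burgess exponent $\frac{r+1}{4r^2}$ is comfortably small. The lemma is then immediate, and will be used in the proof of Proposition~\ref{prop:impWalker} to show that short products of small primes equidistribute in residue classes modulo $q$.
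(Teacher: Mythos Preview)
Your approach is essentially identical to the paper's: apply Burgess with $r = \lceil 1/\delta \rceil$ and verify the resulting exponent beats $-\delta^2/2$. One small arithmetic slip: you claim $\tfrac{\delta}{r} - \tfrac{1}{4r^2} \geq \delta^2 - \tfrac{\delta^2}{4}$, but since $r = \lceil 1/\delta \rceil$ may exceed $1/\delta$ you only have $\tfrac{\delta}{r} \geq \tfrac{\delta^2}{1+\delta}$, not $\tfrac{\delta}{r} \geq \delta^2$; however, for $\delta < 1/4$ this still gives $\tfrac{\delta}{r} - \tfrac{1}{4r^2} > \tfrac{\delta^2}{1+\delta} - \tfrac{\delta^2}{4} > 0.55\,\delta^2 > \tfrac{\delta^2}{2}$, so your conclusion stands.
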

\begin{proof}
By Burgess' theorem (see e.g. \cite[Thm. 12.6]{IK}), for each $r \geq 2$ and $\e > 0$ we have
$$
\left|\sum_{n \leq X} \xi(n)\right| \ll_{\e} X^{1-1/r} q^{\tfrac{r+1}{4r^2} + \e} = X(q^{1/4}/X)^{1/r} q^{1/(4r^2) + \e} \leq Xq^{\tfrac{1}{r}\left(-\delta + \tfrac{1}{4r}\right) + \e}.
$$
We select $r = \lceil 1/\delta \rceil$, so that
$$
\frac{1}{r}\left(-\delta + \frac{1}{4r}\right) \leq -\frac{3\delta}{4r} \leq -\frac{3\delta^2}{4(1+\delta)} \leq -\frac{3\delta^2}{5},
$$
which implies the claim upon taking $0 < \e \leq 0.1 \delta^2$.
\end{proof}
\begin{lem} \label{lem:impWalker}
Let $\delta > 0$, let $q$ be a prime chosen sufficiently large in terms of $\delta$, and let $\eta \geq q^{-3/4+\delta}$. Then there is a $k_0 = k_0(\delta)$ such that if $k \geq k_0$ then $P_{\eta}^{(k)} \gg_{\delta,k} q$.
\end{lem}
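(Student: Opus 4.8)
\textbf{Proof plan for Lemma \ref{lem:impWalker}.}

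The plan is to count, for a fixed residue class $a \pmod q$, the number of $k$-tuples $(p_1,\dots,p_k)$ of primes $p_i < \eta q$ with $p_1\cdots p_k \equiv a \pmod q$, and show this count is positive (indeed $\gg_{\delta,k} \pi(\eta q)^k / q$) for every $a$, which immediately gives $|P_\eta^{(k)}| \gg_{\delta,k} q$ since distinct $a$'s are hit. The standard device is orthogonality of Dirichlet characters: writing $T := \sum_{p < \eta q} 1 = \pi(\eta q)$, the number of such tuples equals
\begin{equation*}
\frac{1}{q-1}\sum_{\xi \bmod q} \bar\xi(a) \left(\sum_{p < \eta q}\xi(p)\right)^k = \frac{T^k}{q-1} + \frac{1}{q-1}\sum_{\xi \neq \xi_0} \bar\xi(a)\left(\sum_{p < \eta q}\xi(p)\right)^k .
\end{equation*}
The main term $T^k/(q-1) \asymp (\eta q)^k/(q \log^k q)$ is of size roughly $q^{(-3/4+\delta)k} \cdot q^{k-1}/\log^k q$; since $\eta q = q^{1/4+\delta}$, this is $\gg q^{(1/4+\delta)k}/(q\log^k q)$, which grows like a positive power of $q$ once $k$ is large (any $k \geq k_0(\delta)$ with $k(1/4+\delta) > 1$ suffices, so $k_0 = \lceil 4/(1+4\delta)\rceil$ works and more room is available). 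So everything reduces to bounding the error sum by something smaller than the main term.

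For the error term, the key input is a nontrivial bound on the character sum $\sum_{p < \eta q}\xi(p)$ over primes for non-principal $\xi$. This is where I would invoke Burgess-type cancellation: by partial summation from Lemma \ref{lem:Burgcor} (which gives $|\sum_{n \leq X}\xi(n)| \ll_\delta X q^{-\delta^2/2}$ for $X \geq q^{1/4+\delta'}$, applicable here since $\eta q = q^{1/4+\delta}$), combined with the elementary sieve/Mertens-type manipulation to pass from sums over integers to sums over primes (e.g. expand $\Lambda(n)$ or use the identity $\sum_{p \leq X}\xi(p)\log p = \sum_{n\leq X}\Lambda(n)\xi(n) + O(\sqrt X)$ and remove the $\log p$ weight by partial summation), one gets $|\sum_{p < \eta q}\xi(p)| \ll_\delta \eta q \cdot q^{-c\delta^2}$ for some absolute $c > 0$, uniformly in $\xi \neq \xi_0$. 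Raising to the $k$th power and summing trivially over the $q-2$ nonprincipal characters (using $|\bar\xi(a)| \le 1$), the error is
\begin{equation*}
\ll_{\delta} \frac{1}{q-1}\cdot q \cdot \left(\eta q\cdot q^{-c\delta^2}\right)^k = \eta^k q^k q^{-c\delta^2 k} = \frac{T^k}{q-1}\cdot (q-1)\, T^{-k}\eta^k q^k q^{-c\delta^2 k},
\end{equation*}
and comparing with the main term $T^k/(q-1)$ one needs $(q-1) \cdot (\log^k q) \cdot q^{-c\delta^2 k} = o(1)$, i.e. $c\delta^2 k > 1$; this again holds once $k \geq k_0(\delta)$, so we may simply take $k_0(\delta) = \max\{\lceil 4/(1+4\delta)\rceil, \lceil 2/(c\delta^2)\rceil\}$ or any larger value. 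Hence for such $k$ the total count is $\gg_{\delta,k} T^k/q \gg_{\delta,k} q$ positive for every $a$, and summing (or rather, noting each class is covered) gives $|P_\eta^{(k)}| \gg_{\delta,k} q$.

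The main obstacle — really the only nonroutine point — is obtaining the uniform cancellation in $\sum_{p<\eta q}\xi(p)$ at the Burgess threshold $\eta q = q^{1/4+\delta}$: Burgess gives cancellation for \emph{integer} sums just above $q^{1/4}$, and one must transfer this to \emph{prime} sums without losing the power-saving factor $q^{-c\delta^2}$. The cleanest route is to not sum over primes directly but over $\Omega(n) \le K$ integers (which is in fact what the downstream Proposition \ref{prop:uncond} wants), or to use the von Mangoldt function together with a Vaughan/Heath-Brown-type identity; but since the paper only needs the lower bound $P_\eta^{(k)} \gg_{\delta,k} q$ and not an asymptotic, the crude bound above suffices, and any standard sieve upper bound $\pi(\eta q) \gg \eta q/\log q$ plus the Burgess transfer handles it. Everything else (orthogonality, trivial summation over characters, choice of $k_0$) is bookkeeping.
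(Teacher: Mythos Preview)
Your argument has a genuine gap at the crucial step. The bound $\left|\sum_{p < \eta q}\xi(p)\right| \ll_\delta \eta q \cdot q^{-c\delta^2}$ for non-principal $\xi$ with $\eta q = q^{1/4+\delta}$ is \emph{not} known unconditionally, and obtaining it would be a significant result in its own right. Burgess gives power-saving cancellation for character sums over \emph{integers} just above $q^{1/4}$, but there is no known mechanism to transfer this to sums over \emph{primes} at the same threshold: partial summation from $\sum_{n}\xi(n)$ to $\sum_{p}\xi(p)$ requires control of $\sum_{n\le t}\Lambda(n)\xi(n)$, which is precisely what you lack; and Vaughan/Heath--Brown identities decompose $\Lambda$ into bilinear pieces whose short factors fall far below the Burgess range, so they only produce cancellation once $X$ is around $q^{1/2}$ or larger. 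Your final paragraph identifies this obstacle correctly but then asserts that ``the crude bound above suffices'' --- it does not, because that crude bound \emph{is} the unavailable input.

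The paper sidesteps this entirely. Instead of seeking cancellation in $\sum_{p}\xi(p)$ and hitting every residue class, it applies Cauchy--Schwarz to get $|P_\eta^{(k)}| \gg (X/\log X)^{2k}/M_k$, where $M_k$ counts solutions to $p_1\cdots p_k \equiv p_{k+1}\cdots p_{2k}\pmod q$. It then majorises the prime indicator by an upper-bound sieve weight $w = 1\ast \lambda$ with $\lambda$ supported on small squarefree moduli $d \leq D$; after orthogonality, the non-principal contribution becomes $\sum_n w(n)\xi(n) = \sum_{d\le D}\lambda_d\xi(d)\sum_{m < X/d}\xi(m)$, and now Lemma~\ref{lem:Burgcor} applies directly to the \emph{inner integer sum}. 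This is essentially the ``sum over sieved integers'' route you mention in passing, but it is not an optional refinement --- it is the entire engine of the proof, and without it the argument does not go through. Note also that the paper only obtains $|P_\eta^{(k)}|\gg_{\delta,k} q$ here, not full coverage; the upgrade to $P_\eta^{(K)} = (\mb{Z}/q\mb{Z})^\times$ is a separate step via Freiman's lemma.
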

\begin{proof}
Let $X := \eta q \geq q^{1/4+\delta}$. Observe that by Chebyshev's bounds,
$$
\sum_{a \in P_{\eta}^{(k)}} |\{p_1,\ldots,p_k < X : \, p_1\cdots p_k \equiv a \pmod{q}\}| = \pi(X-1)^k \gg_k \left(\frac{X}{\log X}\right)^k.
$$
Combining this with the Cauchy-Schwarz inequality, we have
\begin{align} \label{eq:Petabd}
\left(\frac{X}{\log X}\right)^{2k} \ll_k |P_{\eta}^{(k)}| |\{p_1,\ldots,p_{2k} < X : p_1\cdots p_k \equiv p_{k+1}\cdots p_{2k} \pmod{q}\}| =: |P_{\eta}^{(k)}| M_k.
\end{align}
Set $\alpha := \delta^2/8$ and define $z := q^{\alpha^2}$, $D := q^{\alpha}$. Let $(\lambda_d)_d$ be an upper bound beta sieve (see \cite[Sec. 6.2]{IK}) supported on squarefree integers $d \leq D$ with $p|d \Rightarrow p \leq z$, and let $w := 1\ast \lambda$. By orthogonality of characters, 
\begin{align*}
M_k &\leq \sum_{n_1,\ldots,n_{2k} < X} w(n_1)\cdots w(n_{2k}) 1_{n_1\cdots n_k \equiv n_{k+1}\cdots n_{2k} \pmod{q}} \\
&= \frac{1}{q-1} \left(\sum_{n < X}w(n)\right)^{2k} + \frac{1}{q-1} \sum_{\xi \neq \xi_0} \left|\sum_{n < X} w(n) \xi(n)\right|^{2k},
\end{align*}
where we have denoted by $\xi_0$ the principal character modulo $q$. If $\delta$ is small enough then by the fundamental lemma of the sieve \cite[Lem. 6.3]{IK}, the principal character contribution is
$$
\frac{1}{q-1} \left(\sum_{n < X}w(n)\right)^{2k}\ll_k \frac{1}{q} \left(\frac{X}{\log z}\right)^{2k} \leq \frac{1}{\alpha^{4k} q} \left(\frac{X}{\log X}\right)^{2k}.
$$
For each non-principal character $\xi$ modulo $q$ we expand the convolution of $w(n)$ and apply Lemma \ref{lem:Burgcor}, getting, 
$$
\sum_{n < X} w(n) \xi(n) = \sum_{d \leq D} \lambda_d \xi(d) \sum_{m < X/d} \xi(m) \ll D\max_{d \leq D} \frac{X}{d} q^{-\tfrac{1}{2}(\delta-\alpha)^2} \ll Xq^{-\tfrac{1}{2}\delta^2 + \alpha(1+\delta)} \leq Xq^{-\tfrac{1}{4}\delta^2}.
$$
It follows that
$$
M_k \ll \frac{1}{\alpha^{4k} q}\left(\frac{X}{\log X}\right)^{2k}  + X^{2k} q^{-\tfrac{1}{4}\delta^2 k}.
$$
We now select $k_0 = 1+ \lceil 4/\delta^2\rceil$, so that if $q$ is sufficiently large and $k \geq k_0$ then 
$$
X^{2k} q^{-\tfrac{1}{4}\delta^2k} \ll \frac{1}{q}\left(\frac{X}{\log X}\right)^{2k}.
$$
We therefore deduce that $M_k \ll_{\delta} \frac{1}{q}\left(\frac{X}{\log X}\right)^{2k}$, and conclude from \eqref{eq:Petabd} that $|P_{\eta}^{(k)}| \gg_{\delta,k} q$, as required.
%
\end{proof}
\noindent To pass from a set $P_{\eta}^{(k)}$ containing a positive proportion of non-zero residue classes modulo $q$ to one covering all of $(\mb{Z}/q\mb{Z})^{\times}$, we use the following well-known result of Freiman (see \cite[Lem. 8]{Walker}, and its application in \cite[Proof of Thm. 3]{Walker}).
\begin{lem} \label{lem:Frei}
Let $G$ be a finite abelian group, let $S \subseteq G$ be a set that is not contained in a coset of a proper subgroup of $G$. Then either $|S^{(2)}| \geq \frac{3}{2}|S|$ or $S^{(4)} = G$.
\end{lem}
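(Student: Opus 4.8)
The plan is to reduce the dichotomy to Kneser's theorem on product sets in a finite abelian group. Assume $S \neq \emptyset$ (otherwise the first alternative holds trivially) and suppose, towards the dichotomy, that $|S^{(2)}| < \tfrac{3}{2}|S|$; the goal is then to show $S^{(2)} = G$, which a fortiori gives $S^{(4)} = G$.

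First I would introduce $H := \{g \in G : g S^{(2)} = S^{(2)}\}$, the stabiliser of the product set $S^{(2)}$, which is a subgroup of $G$. Kneser's theorem, applied to $S \cdot S$, then yields $|S^{(2)}| \geq 2|SH| - |H|$. Since $1 \in H$ we have $S \subseteq SH$, hence $|S| \leq |SH|$, and combining this with the standing assumption gives
\[
2|SH| - |H| \;\leq\; |S^{(2)}| \;<\; \tfrac{3}{2}|S| \;\leq\; \tfrac{3}{2}|SH|,
\]
which rearranges to $|SH| < 2|H|$. But $SH$ is a nonempty union of cosets of $H$, so $|SH|$ is a positive multiple of $|H|$; being strictly less than $2|H|$, it must equal $|H|$, i.e.\ $S$ is contained in a single coset of $H$.

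At this point the hypothesis is used: since $S$ is not contained in a coset of any \emph{proper} subgroup of $G$, we are forced to have $H = G$. Then $S^{(2)}$ is a nonempty subset of $G$ invariant under translation by every group element, so $S^{(2)} = G$, and therefore $S^{(4)} = G$, as required. The only genuinely nontrivial ingredient is Kneser's theorem itself — everything else is bookkeeping — so that is where the real content sits; if one wanted to keep the note self-contained, one could replace it by an induction on $|G|$, passing to the quotient $G/H$ (on which the image of $S$ still avoids cosets of proper subgroups, and on which $S^{(2)}$ and $S^{(4)}$ descend to the corresponding product sets), but invoking Kneser is cleaner.
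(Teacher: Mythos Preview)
Your proof is correct. The paper does not actually prove this lemma; it simply cites it as a known result of Freiman, pointing to \cite[Lem.~8]{Walker} and its use in \cite[Proof of Thm.~3]{Walker}. Your Kneser-based argument is exactly the standard route to such a statement and in fact yields the stronger conclusion $S^{(2)}=G$ (not merely $S^{(4)}=G$) under the assumption $|S^{(2)}|<\tfrac{3}{2}|S|$; the weaker $S^{(4)}=G$ in the stated lemma is all that is needed for the iterative application in the proof of Proposition~\ref{prop:impWalker}, so nothing is lost.
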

\begin{proof}[Proof of Proposition \ref{prop:impWalker}]
By \cite[Lem. 9]{Walker}, we know that $P_{\eta}$ is not contained in a coset of a proper subgroup of $(\mb{Z}/q\mb{Z})^{\times}$, provided $\eta \geq q^{-3/4+\delta}$. Consequently, the same is true of $P_{\eta}^{(k)}$, for any $k \geq 1$ (as otherwise, fixing $p_0 < \eta q$ we could choose for each $p < \eta q$ products $P_1,P_2 \in P_{\eta}^{(k)} \subseteq x H$ such that $P_2/P_1 = p/p_0$, whence $p \in p_0 H$ for all $p < \eta q$, a contradiction). \\
As $\eta \geq q^{-3/4+\delta}$, by Proposition \ref{prop:impWalker} we can find $k = k(\delta)$ such that $P_{\eta}^{(k)} \geq c(\delta)q$ for some $c(\delta) > 0$ that depends only on $\delta$.  Applying Lemma \ref{lem:Frei} iteratively, we see that there is a $m \leq 1+\tfrac{\log(1/c(\delta))}{\log(3/2)}$ such that if $K := 4 \cdot 2^m k$ then $P_{\eta}^{(K)} = (\mb{Z}/q\mb{Z})^\times$, as claimed.
\end{proof}
\begin{proof}[Proof of Proposition \ref{prop:uncond}]
We apply the previous lemma with $q = N$, letting $\delta := c-1/4 > 0$. Then there is a $K_2 = K_2(\delta) > 0$ such that every $a \in (\mb{Z}/q\mb{Z})^{\times}$ may be represented as a product $p_1\cdots p_{K_2}$ with each $p_j < N^c$. But any such product has size $< N^{c K_2}$, so the claim follows upon taking $K_1 := cK_2$. 
\end{proof}

\subsection{Conditional results}
Our main task in this subsection is to prove the following result, which is our conditional analogue of Proposition \ref{prop:uncond}.
\begin{prop} \label{prop:bddOmega}
Let $A, \eta >0$, $c \in (0,1/2)$ such that $A$ is sufficiently large in terms of $c$ and $\eta$, and $\eta \leq 0.1 c^4/(\log(1/c))^2$. Let $N$ be a prime of $A$-Littlewood type, chosen sufficiently large in terms of $A$. Let $\mc{A} \subseteq \mb{P} \cap [2,N]$ be a set of primes satisfying
$$
\sum_{\ss{p < N \\ p \notin \mc{A}}} \frac{1}{p} \leq \eta.
$$
Then there is a $K = K(c)$ such that
$$
\min_{1 \leq a < N} |\{n \leq N^{1/c} : \, p|n \Rightarrow p \in \mc{A} \cap [2,N^c], \, \Omega(n) \leq K, \, n \equiv a \pmod{N}\}| \geq 1.
$$
\end{prop}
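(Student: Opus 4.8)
The plan is to mimic the unconditional argument, but with two essential changes: we must work with a sieved set of primes (those in $\mc{A}$, below $N^c$) rather than all primes below $N^c$, and we no longer have Burgess' bound available for individual characters, so we must instead feed in the hypothesis that $N$ is of $A$-Littlewood type via a Littlewood-type zero-free region estimate. Concretely, let $X := \eta N$ (or a suitable power $N^c$ of $N$ -- I would take $X = N^c$ with $c \in (0,1/2)$ as in the statement) and let $P_{\mc{A},\eta}$ denote the set of residue classes $a \pmod N$ covered by primes $p \in \mc{A} \cap [2, X]$. The goal is to show $P_{\mc{A},\eta}^{(K)} = (\mb{Z}/N\mb{Z})^\times$ for some $K = K(c)$, after which the size bound $n \leq N^{1/c}$ and the bound on $\Omega(n)$ follow exactly as in the proof of Proposition \ref{prop:uncond} (each generator is $< N^c$, so a product of $K$ of them has size $< N^{cK}$, and one rescales $K$; the condition $p|n \Rightarrow p \in \mc{A} \cap [2, N^c]$ is automatic by construction).

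The key analytic step replaces Lemma \ref{lem:Burgcor}: under the $A$-Littlewood hypothesis, for every non-principal character $\xi \pmod N$ and every $X$ with $\log X \gg (\log N)/(A \log\log N)$ say, one has a power-saving-in-$\log\log N$ cancellation $\left|\sum_{n \leq X} \xi(n)\right| \ll X (\log N)^{-c'}$ or better, via the standard contour-shift / Perron argument using zero-freeness of $L(s,\xi)$ in the region \eqref{eq:LWZFR}. I would then redo the second-moment computation of Lemma \ref{lem:impWalker}: sieving the primes $p \in \mc{A}$ with a beta sieve (upper-bound weights $\lambda_d$ supported on $d \leq D = N^\alpha$, $p | d \Rightarrow p \leq z = N^{\alpha^2}$ for a small $\alpha = \alpha(c)$) to produce $w = 1 \ast \lambda$, applying orthogonality of characters mod $N$ to the count $M_k$ of solutions of $n_1\cdots n_k \equiv n_{k+1}\cdots n_{2k} \pmod N$ with $n_i < X$ supported on $\mc{A}$-smooth-ish numbers, bounding the principal term by the fundamental lemma of the sieve against $(X/\log z)^{2k}$, and bounding each non-principal term by the Littlewood estimate. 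Since the hypothesis $\sum_{p \notin \mc{A}, p < N} 1/p \leq \eta$ with $\eta \leq 0.1 c^4/(\log(1/c))^2$ guarantees that primes in $\mc{A}\cap[2,X]$ still have density close to $1/\log X$ (the deleted primes contribute a factor $\geq \prod_{p \notin \mc{A}}(1 - 1/p) \geq e^{-O(\eta)} $ which is $\geq 1/2$, say), the main term for $\sum_{a \in P_{\mc{A},\eta}^{(k)}} (\cdots)$ is still $\gg_k (X/\log X)^k$, and Cauchy--Schwarz then yields $|P_{\mc{A},\eta}^{(k)}| \gg_{c,k} N$ for $k \geq k_0(c)$.

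Finally, to upgrade from "positive proportion of residues" to "all of $(\mb{Z}/N\mb{Z})^\times$" I would invoke Freiman's Lemma \ref{lem:Frei} exactly as in the proof of Proposition \ref{prop:impWalker}: one must check that $P_{\mc{A},\eta}$ is not contained in a coset of a proper subgroup of $(\mb{Z}/N\mb{Z})^\times$ (for $N$ prime this group is cyclic of order $N-1$, and one argues that since $\mc{A} \cap [2, X]$ is a large set of primes, the multiplicative images generate -- the quotient argument in the proof of Proposition \ref{prop:impWalker} reduces this to the case $k=1$, where one uses that $\mc{A}$ contains at least two primes with distinct residues generating more than any index-$>1$ subgroup; a clean way is to note $|P_{\mc{A},\eta}| \geq \pi(X) - (\text{deleted}) $ is already larger than $(N-1)/\ell$ for every prime $\ell \mid N-1$ that is not too small, and handle small $\ell$ directly since a coset of an index-$\ell$ subgroup has size $(N-1)/\ell$), and then iterate $|S^{(2)}| \geq \tfrac32 |S|$ at most $O_{c}(1)$ times before $S^{(4)}$ exhausts the group, giving $K = 4 \cdot 2^{O_c(1)} k_0(c) = K(c)$.

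\medskip

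\noindent\textbf{Main obstacle.} The crux is obtaining the Littlewood-type character-sum bound $\left|\sum_{n\leq X}\xi(n)\right| = o(X)$ \emph{uniformly over all non-principal $\xi \pmod N$ simultaneously} for $X$ as small as $N^c$, purely from zero-freeness in the strip \eqref{eq:LWZFR}; this is where the constant $A$ must be taken large relative to $c$, and where the interplay $\eta \leq 0.1 c^4/(\log(1/c))^2$ between the sieve parameter $\alpha \sim c^2$ and the admissible size of the deleted prime set gets pinned down. Getting these quantitative dependencies mutually consistent -- so that the non-principal contribution to $M_k$ genuinely beats the main term after raising to the $2k$-th power and summing over the $N-2$ characters -- is the delicate bookkeeping step; everything else is a faithful transcription of the Walker-style argument already carried out unconditionally above.
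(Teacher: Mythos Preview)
Your approach is genuinely different from the paper's, and unfortunately it contains a real gap at exactly the analytic step you flagged as the crux.

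The obstruction is quantitative. Under the $A$-Littlewood hypothesis, the best character-sum bound one can extract at scale $X = N^c$ (via Perron and a contour shift to the edge of the zero-free region \eqref{eq:LWZFR}) is of the shape
\[
\left|\sum_{n \leq X}\xi(n)\right| \ll X (\log N)^{-cA + O(1)},
\]
i.e.\ a saving of a power of $\log N$, not a power of $N$. Feeding this into the Walker second-moment bound for $M_k$, the non-principal contribution becomes
\[
\frac{1}{N-1}\sum_{\xi \neq \xi_0}\left|\sum_{n < X} w(n)\xi(n)\right|^{2k} \ll X^{2k}(\log N)^{-2kcA + O(k)},
\]
and for this to be $\ll \tfrac{1}{N}(X/\log X)^{2k}$ (as required to deduce $|P_{\mc{A},\eta}^{(k)}| \gg_c N$) one needs $(\log N)^{2kcA} \gg N$, i.e.\ $k \gg (\log N)/(\log\log N)$. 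Thus $k$ cannot be taken of size $O_c(1)$; the products $p_1\cdots p_k$ you construct then have size up to $N^{ck}$, far larger than $N^{1/c}$, and the conclusion $\Omega(n) \leq K(c)$ fails. No tuning of the sieve parameters rescues this, because the obstacle is a trivial sum over $N-1$ characters against only a logarithmic individual saving.

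The paper instead abandons the Walker framework entirely and deduces Proposition~\ref{prop:bddOmega} from Proposition~\ref{prop:soundCond}, which gives an \emph{asymptotic} for the number of $n \leq x$ with $n \equiv a \pmod{N}$ and all prime factors in $\mc{A} \cap (z,y]$, applied with $x = N^{1/c}$, $y = N^c$, $z = N^{c^2}$. That asymptotic (Proposition~\ref{prop:soundGen}) is proved by adapting the Soundararajan--Harper method for friable integers in progressions: one groups the non-principal characters modulo $N$ into classes $\Xi(k)$ according to the depth $k$ of their nearest zero, uses the log-free zero density estimate $|\Xi(k)| \ll e^{C_2 k}$ to control how many characters lie in each class, and invokes the $A$-Littlewood hypothesis only to ensure $\Xi(k) = \emptyset$ for $k < A\log\log N$. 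The point is that one never sums a uniform bound over all $N-1$ characters; the per-character savings in each $\Xi(k)$ are balanced against the multiplicity of that class. The bound $\Omega(n) \leq K(c)$ then falls out for free from the lower bound $P^-(n) > z = N^{c^2}$ combined with $n \leq N^{1/c}$, giving $K = c^{-3}$.
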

The above proposition will follow from the following more general statement.
\begin{prop}\label{prop:soundCond}
Let $\eta,A,C > 0$ with $A$ sufficiently large in terms of $\eta$ and $C$. Let $2 \leq z \leq y \leq x$, let $q$ be a prime modulus that is of $A$-Littlewood type and chosen sufficiently large in terms of $A$, and suppose $y \leq q \leq y^C$. Define
$$
u := \frac{\log x}{\log y}, \quad v := \frac{\log x}{\log z}, \quad w := \frac{\log x}{\log q}.
$$
Let $\mc{A} \subseteq [2,q] \cap \mb{P}$ be a set of primes with
\begin{equation}\label{eq:etacond}
\sum_{\ss{p \leq q \\ p \notin \mc{A}}} \frac{1}{p} \leq \eta,
\end{equation}
and assume that $u\log u \leq \eta^{-1/2}$. Then, provided that $u$ is sufficiently large in terms of $C$, and $v/u \geq (\log (2u))^2$, 
$$
\min_{1 \leq a < q} |\{n \leq x : \, n \equiv a \pmod{q}, \, p|n \Rightarrow p \in \mc{A} \cap (z,y]\}| 
\gg \frac{xu^{-2u}}{vq\log x}.
$$
\end{prop}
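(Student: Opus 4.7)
The plan is to combine character orthogonality modulo $q$ with a Perron-type contour argument exploiting the $A$-Littlewood zero-free region \eqref{eq:LWZFR}. Define
$$
N(a) := |\{n \leq x : n \equiv a \pmod{q},\, p|n \Rightarrow p \in \mc{A} \cap (z,y]\}|,
$$
and for each $\chi \pmod q$ the character sum
$$
\Psi_{\mc{A}}(x;\chi) := \sum_{\ss{n \leq x \\ p|n \Rightarrow p \in \mc{A} \cap (z,y]}}\chi(n),
$$
so that $N(a) = \phi(q)^{-1}\sum_\chi \bar\chi(a)\Psi_{\mc{A}}(x;\chi)$. The task reduces to showing that the $\chi = \chi_0$ term dominates the remaining contributions, forcing $N(a) > 0$.

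For the principal character, the target is $\Psi_{\mc{A}}(x;\chi_0) \gg x u^{-u(1+o(1))}/(v\log x)$. Starting from Hildebrand's asymptotic $\Psi(x,y) \asymp x\rho(u)$ with $\rho(u) = u^{-u(1+o(1))}$, I would account for the additional constraints as follows. The cut $P^-(n) > z$ loses at most a constant factor by the fundamental lemma of the sieve, which is where the hypothesis $v/u \geq (\log 2u)^2$ is used; the restriction to primes in $\mc{A}$ is handled by a Rankin-type bound, whose loss factor is $\exp(O(u\eta))$ and hence $O(1)$ under the hypothesis $u\log u \leq \eta^{-1/2}$. After dividing by $\phi(q) \asymp q$ this yields the stated main term, with comfortable slack in passing from $u^{-u}$ to $u^{-2u}$ to absorb all subsidiary losses.

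For $\chi \neq \chi_0$, I would estimate $\Psi_{\mc{A}}(x;\chi)$ via Perron's formula applied to the Euler product
$$
F(s,\chi) := \prod_{p \in \mc{A}\cap (z,y]}\bigl(1-\chi(p)p^{-s}\bigr)^{-1} = L(s,\chi)\cdot E(s,\chi),
$$
where $E(s,\chi)$ is the explicit correction that removes the Euler factors at primes in $[2,z]\cup(y,\infty)$ and at primes outside $\mc{A}$. I would shift the contour from $\text{Re}(s) = 1 + 1/\log x$ to $\text{Re}(s) = 1 - A\log\log q/\log q$, which by the $A$-Littlewood hypothesis lies inside the zero-free region of $L(s,\chi)$, so no poles are crossed. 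On the shifted line, standard estimates give $|L(s,\chi)| \ll (\log q)^{O(1)}$, while $|E(s,\chi)| \leq \exp(O(\eta + \log v + \log w))$ by \eqref{eq:etacond} and Mertens' theorem. Combining these yields $|\Psi_{\mc{A}}(x;\chi)| \ll x^{1 - A\log\log q/\log q}(\log x)^{O(1)}$ for each non-principal $\chi$. Summing over the $q-1$ non-principal characters and using $x = q^w$, the total is $o$ of the main term once $A$ is taken sufficiently large in terms of $C$ (so that $x^{A\log\log q/\log q}$ dominates $q^2$), giving $N(a)>0$ for every $a$.

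The principal obstacle I anticipate is the non-principal estimate: passing from the zero-free region for $L(s,\chi)$ to the analogous control on the truncated Euler product $F(s,\chi)$ requires careful treatment of $E(s,\chi)$ on the shifted line. The primes in $[2,z]$ and in $(y,q]$ that are removed are potentially numerous but contribute only a polynomial-in-$\log q$ factor, while the primes outside $\mc{A}$ are controlled solely by the weak hypothesis \eqref{eq:etacond}; balancing these losses against the $q$ in the denominator of the main term is precisely why the threshold $A \gg_C 1$ appears, and why the final bound takes the weaker shape $u^{-2u}$ rather than $u^{-u(1+o(1))}$.
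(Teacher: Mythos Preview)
Your strategy has the right shape, but the non-principal estimate fails quantitatively in two places. First, you bound each $|\Psi_{\mc{A}}(x;\chi)|$ by $x^{1-A\log\log q/\log q}(\log x)^{O(1)}$ and then sum trivially over the $q-1$ characters, expecting $x^{A\log\log q/\log q}$ to dominate $q^2$. But $x^{A\log\log q/\log q}=\exp(Aw\log\log q)=(\log q)^{Aw}$, and since $w\le u$ is bounded in the regime of the proposition (in the intended application $u=c^{-2}$ is a fixed constant), this saving is only a power of $\log q$ and can never compete with $q$, no matter how large the constant $A$ is. Second, your bound $|E(s,\chi)|\le\exp(O(\eta+\log v+\log w))$ is not valid on the shifted line: already the finite factor $\prod_{p\le z}(1-\chi(p)p^{-s})$ can be as large as $\exp\bigl(\sum_{p\le z}p^{-\sigma}\bigr)$, and $\sum_{p\le z}p^{-\sigma}\gg z^{1-\sigma}/\log z$ is typically a positive power of $\log q$, not $O(\log v)$; meanwhile the tail $\prod_{p>y}(1-\chi(p)p^{-s})$ does not even converge as an Euler product for $\sigma<1$.

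The paper avoids both problems. It works directly with the finite Euler product $L(s,\chi;y,z)$ on the saddle-point line $\sigma=\alpha(x,y,z)$, so no correction factor $E$ is introduced. Crucially, it does \emph{not} sum trivially over characters: following Soundararajan and Harper, it partitions the $\chi\ne\chi_0$ into sets $\Xi(k)$ according to the actual depth $k$ of their zero-free region, invokes the log-free zero-density estimate $|\Xi(k)|\le C_1e^{C_2k}$, and for $\chi\in\Xi(k)$ shifts the contour to $\alpha-Bk/\log x$, saving a factor $e^{-Bk}$. The resulting sum $\sum_{k\ge K}e^{(C_2-B)k}\ll e^{-(B-C_2)K}$ once $B>C_2$, and the Littlewood hypothesis guarantees $k\ge K=A\log\log q$ throughout. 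Finally, the restriction to $\mc{A}$ is handled \emph{after} this equidistribution step (Proposition~\ref{prop:soundGen}) by the simple inequality
$$
N(a)\ge\Theta(x,y,z;q,a)-\sum_{\ss{z<p\le y\\ p\notin\mc{A}}}\Theta(x/p,y,z;q,ap^{-1}),
$$
where the subtracted sum is $O(\eta\, u\log u)=O(\eta^{1/2})$ times the main term, since $p^{-\alpha}=p^{-1}p^{1-\alpha}\ll p^{-1}u\log u$ by the saddle-point relation \eqref{eq:alphaEst}.
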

\noindent Let us introduce some further notation. For $1 \leq z \leq y \leq x$, $1 \leq q \leq x$ and $a \in (\mb{Z}/q\mb{Z})^{\times}$ define
\begin{align*}
\Theta(x,y,z;q,a) &:= |\{n \leq x : \, n \equiv a \pmod{q}, \, p|n \Rightarrow p \in \mb{P}\cap (z,y]\}|, \\
\Theta(x,y,z) &:= |\{n \leq x : \, p|n \Rightarrow p \in \mb{P} \cap (z,y]\}|.
\end{align*}
The study of $\Theta(x,y,z)$ was initiated by Friedlander in \cite{Fri}; definitive results in wide ranges of the parameters $x,y$ and $z$ were given in a series of papers by Saias \cite{Saias1}, \cite{Saias2} and \cite{Saias3}. As far as we are aware, however, there is comparatively little discussion of $\Theta(x,y,z;q,a)$ in the literature (certainly in the range $y \leq q$). \\
In the appendix to this paper we will use some of the literature on $\Theta(x,y,z)$ to adapt to our setting a method invented by Soundararajan \cite{Sound}, and later refined by Harper \cite{Harper}, to study \emph{$y$-friable} numbers (i.e., integers $n$ such that if $p|n$ then $p \leq y$) in arithmetic progressions. As it is not the primary focus of this work we only consider restricted ranges in $q,y$ and $z$, namely, when 
$$
\min\{\sqrt{x}, z^{\log\log x}\} \geq y \geq z^2, \quad z \geq \exp(\sqrt{\log x}), \quad y \leq q \leq y^C,
$$
and $u$ and $v/u$ (in the notation of Proposition \ref{prop:soundCond}) are sufficiently large; however, these results can probably be proved in a wider range of parameters. The key estimate that we will prove is Proposition \ref{prop:soundGen} below. \\
In what follows, we let $\Psi(x,y) = \Theta(x,y,1)$ denote the number of $y$-friable integers $n \leq x$, and define $\alpha = \alpha(x,y,z)$ to denote the unique real number $\sg$ such that
$$
\sum_{z < p \leq y} \frac{\log p}{p^{\sg}-1} = \log x.
$$
By \cite[Thm. 2]{Saias3}, it is known that under the above assumptions on $x,y,z$, we have 
\begin{equation}\label{eq:alphaEst}
\alpha = 1-\frac{\log(u\log u) + O(1)}{\log y}.
\end{equation}
\begin{prop} \label{prop:soundGen}
Let $A,C > 0$, let $1< z \leq y \leq x$, let $q$ be a prime modulus that is of $A$-Littlewood type, and assume $y \leq q \leq y^C$. Define
$$
u := \frac{\log x}{\log y}, \quad v := \frac{\log x}{\log z}, \quad w := \frac{\log x}{\log q}.
$$
Assume also that 
$$
\min\{\sqrt{x}, z^{\log\log x}\} \geq y \geq z^2, \quad z \geq \exp(\sqrt{\log x}),
$$ 
that $u \ra \infty$ as $x \ra \infty$ and that $v/u \geq (\log(2u))^2$. 
Then, 
uniformly over $1 \leq d \leq y$,
$$
\Theta(x/d,y,z;q,a) = (1+o_{u \ra \infty}(1)) \frac{\Psi(x,y)}{d^{\alpha}\phi(q)} \prod_{p \leq z}\left(1-\frac{1}{p}\right).
$$
\end{prop}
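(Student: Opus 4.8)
The plan is to follow the Soundararajan--Harper circle-of-ideas for friable numbers in progressions, but with the extra twist that our modulus $q$ \emph{exceeds} the friability parameter $y$, so the ``main term'' contribution comes not from $n \equiv 0 \pmod q$ (there are no such $n$ among the integers with prime factors in $(z,y]$ and $n \le x$, once $x < q$... but here $x \ge q$) but genuinely from equidistribution in residue classes. The starting point is orthogonality of Dirichlet characters modulo $q$: writing $\mathbf 1_{n \equiv a} = \phi(q)^{-1}\sum_{\xi \bmod q}\bar\xi(a)\xi(n)$, we get
$$
\Theta(x/d,y,z;q,a) = \frac{1}{\phi(q)}\sum_{\xi \bmod q} \bar\xi(a) \sum_{\ss{n \le x/d \\ p|n \Rightarrow z < p \le y}} \xi(n).
$$
The principal character $\xi_0$ contributes $\phi(q)^{-1}\,\Theta(x/d,y,z)$ essentially (up to removing $n$ divisible by $q$, which are negligible since $q > y$ forces such $n$ to have a prime factor outside $(z,y]$, hence none occur), and by the Saias-type asymptotics for $\Theta(x/d,y,z)$ — together with the fact that $\Theta(x/d,y,z) \sim d^{-\alpha}\Theta(x,y,z)$ uniformly for $1\le d\le y$, which is where the estimate \eqref{eq:alphaEst} for $\alpha$ and the hypothesis $v/u \ge (\log 2u)^2$ enter — this principal term is $(1+o(1))\,\phi(q)^{-1} d^{-\alpha}\,\Theta(x,y,z)$. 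Finally one relates $\Theta(x,y,z)$ to $\Psi(x,y)\prod_{p\le z}(1-1/p)$ by a standard sieve/fundamental-lemma comparison (removing the small primes up to $z$ from $y$-friable numbers), valid because $z \ge \exp(\sqrt{\log x})$ makes $u/v$ and the relevant Mertens products behave. So the entire content is in showing the \emph{non-principal characters contribute a genuine error term} $o(\Psi(x,y)/(d^\alpha \phi(q)) \prod_{p\le z}(1-1/p))$ after dividing by $\phi(q)$ and summing over $\xi \ne \xi_0$.

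For the non-principal part I would run the Soundararajan--Harper argument. Introduce a smoothing at scale $\alpha$: for $\text{Re}(s) = \alpha$ consider the Dirichlet series $F(s,\xi) := \prod_{z < p \le y}(1 - \xi(p)p^{-s})^{-1}$ and its principal-character analogue $F(s,\xi_0)$; the generating series of $\Theta(\cdot,y,z;q,a)$-type sums is $\zeta$-free and supported on primes in $(z,y]$, so a Perron / contour-shift argument reduces the character sum $\sum_{n\le x/d} \xi(n)$ (over such $n$) to an integral of $F(\alpha + it, \xi)(x/d)^{\alpha+it}/(\alpha+it)\,dt$. The key input is a pointwise bound of the form $|F(\alpha+it,\xi)| \ll F(\alpha,\xi_0) \cdot (\text{small})$ coming from the observation that $\log|F(\alpha+it,\xi)| - \log F(\alpha,\xi_0) = \sum_{z<p\le y} p^{-\alpha}(\text{Re}(\xi(p)p^{-it}) - 1) + O(\cdots)$, and this Re-sum is controlled by \emph{large values / mean value estimates for character sums over primes in $(z,y]$}, which is exactly where the $A$-Littlewood hypothesis \eqref{eq:LWZFR} is used: the wide zero-free region for $L(s,\xi)$, $\xi \bmod q$, gives (via a standard explicit-formula / Vinogradov--Korobov-style estimate, or just the classical prime-number-theorem-in-progressions argument with the Littlewood region) that $\sum_{z < p \le y}\xi(p)\log p$ is small — of size $y\exp(-c A \log\log q \cdot (\text{something}))$ — uniformly for all non-principal $\xi$ and all $|t| \le N$ (here using $q \le y^C$ so that $\log q \asymp \log y$ and the range $|\text{Im}(s)| \le N$ genuinely covers the relevant $t$). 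Partial summation upgrades this to smallness of $\sum_{z<p\le y} \xi(p)p^{-\alpha - it}$, hence $|F(\alpha+it,\xi)| \le F(\alpha,\xi_0)\cdot q^{-\delta(A)}$ for some $\delta(A) > 0$ with $\delta(A) \to \infty$ as $A \to \infty$ — or at any rate $\delta(A)$ large enough that, after multiplying by $\phi(q) \asymp q$ worth of characters and the trivial bound on the remaining integral, the total non-principal contribution is $o(1)$ times the main term. The role of ``$A$ sufficiently large in terms of $\eta, C$'' and ``$u \to \infty$'' is precisely to make this $q^{-\delta(A)}$ beat the $\phi(q)$ and the $u^{-O(u)}$-type losses inherent in friable-number counts.

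The main obstacle I anticipate is not any single estimate but the \emph{uniformity in $d$ up to $y$} combined with the uniformity over all $|t|\le N$: one must arrange the contour integral / Perron truncation so that the error terms are controlled uniformly as $d$ ranges over $[1,y]$, which forces the truncation height to be polynomial in $x$ and hence forces the zero-free region to extend to height $\asymp N$ — exactly the shape of \eqref{eq:LWZFR}. A secondary technical point is establishing $\Theta(x/d,y,z) = (1+o(1)) d^{-\alpha}\Theta(x,y,z)$ uniformly for $1 \le d \le y$: this is a Saias-type local-behaviour statement for $\Theta(x,y,z)$, and one has to check that the cited results (e.g. \cite[Thm.\ 2]{Saias3}) are quoted in a form, or can be massaged into a form, that gives this ratio with the claimed error in the stated range $\min\{\sqrt x, z^{\log\log x}\}\ge y\ge z^2$, $z\ge\exp(\sqrt{\log x})$; the constraint $v/u\ge(\log 2u)^2$ is what guarantees $\alpha$ is bounded away from $0$ and $1$ in a quantitatively useful way (via \eqref{eq:alphaEst}), so that $d^{-\alpha}$ is a genuinely meaningful, stable weight. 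Once those two comparison statements are in hand, assembling the principal term into the stated product $\Psi(x,y) d^{-\alpha}\phi(q)^{-1}\prod_{p\le z}(1-1/p)$ is routine bookkeeping with Mertens' theorem and the fundamental lemma of the sieve.
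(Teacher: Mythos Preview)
Your overall architecture is right --- orthogonality, principal term via Saias-type asymptotics and Lemma \ref{lem:dlBTen}, non-principal characters via contour integration --- but the heart of the non-principal estimate has a genuine gap. You propose a pointwise bound $|F(\alpha+it,\xi)| \le F(\alpha,\xi_0)\cdot q^{-\delta(A)}$ with $\delta(A)\to\infty$, and then a trivial sum over all $\phi(q)\asymp q$ characters. But on the line $\text{Re}(s)=\alpha$ one has
\[
\log F(\alpha,\xi_0) - \log|F(\alpha+it,\xi)| \;\le\; 2\sum_{z<p\le y} p^{-\alpha},
\]
and from the defining relation for $\alpha$ (namely $\sum_{z<p\le y}(\log p)p^{-\alpha}/(1-p^{-\alpha})=\log x$) one finds $\sum_{z<p\le y}p^{-\alpha}\asymp u$ up to lower-order factors; compare Lemma \ref{lem:coll}(d), where the ratio $|L(\alpha+it,\chi_0;y,z)|/L(\alpha,\chi_0;y,z)$ saves only $e^{-cu}$. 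Since the hypotheses allow $u$ to tend to infinity arbitrarily slowly while $\log q\asymp\log y=\log x/u$ is enormous, the best possible pointwise saving $e^{-O(u)}$ cannot beat $q$ characters, regardless of how large $A$ is --- the quantity $\sum p^{-\alpha}$ does not see $A$ at all. So $\delta(A)\to\infty$ is simply false, and the trivial summation over characters cannot close.

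What the paper actually does --- and what is the essential content of the Soundararajan--Harper method you invoke by name --- is to stratify the non-principal characters into sets $\Xi(k)$ according to the depth $k/\log q$ of their individual zero-free region, use the log-free zero-density estimate $|\Xi(k)|\le C_1 e^{C_2 k}$, and for $\chi\in\Xi(k)$ shift the contour by a $k$-\emph{dependent} amount $Bk/\log(x/d)$. This produces a gain $e^{-Bk}$ against a loss $e^{k/50}$ from Lemma \ref{lem:prop1} and a count $e^{C_2k}$ from zero density; choosing $B>C_2+1/50$ makes $\sum_{k\ge K}e^{(C_2+1/50-B)k}\ll e^{-cK}=(\log q)^{-cA}$. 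The $A$-Littlewood hypothesis enters only to guarantee $\Xi(k)=\emptyset$ for $k<K=A\log\log q$, so that the geometric sum begins at $k=K$; it is the zero-density estimate, not a uniform pointwise bound, that handles the bulk of the characters. Your proposal omits precisely this mechanism.
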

\begin{proof}[Proof of Proposition \ref{prop:soundCond} assuming Proposition \ref{prop:soundGen}]
Let $1 \leq a < q$. Note that
\begin{align*}
&|\{n \leq x : \, n \equiv a \pmod{q}, \, p|n \Rightarrow p \in \mc{A} \cap (z,y]\}|
\geq \Theta(x,y,z;q,a) - \sum_{\ss{z < p \leq y \\ p \notin \mc{A}}} \Theta(x/p,y,z; q,ap^{-1}).
\end{align*} 
By Proposition \ref{prop:soundGen}, the latter is
\begin{equation}\label{eq:approxThaq}
(1+o(1))\frac{\Psi(x,y)}{\phi(q)} \prod_{p \leq z} \left(1-\frac{1}{p}\right) \left(1- \sum_{\ss{ z < p \leq y \\ p \notin \mc{A}}} \frac{1}{p^{\alpha}} \right).
\end{equation}
Combining \eqref{eq:alphaEst}, \eqref{eq:etacond} and Mertens' theorems, we get
$$
(1+o(1))\frac{e^{-\gamma}\Psi(x,y)}{\phi(q) \log z} \left(1 - \sum_{\ss{ z < p \leq y \\ p \notin \mc{A}}} \frac{p^{1-\alpha}}{p} \right) = (1+o(1))\frac{e^{-\gamma}\Psi(x,y)}{\phi(q) \log z}\left(1-O(\eta u \log u)\right).
$$
By assumption we have $u\log u \leq \eta^{-1/2}$, so if $\eta$ is sufficiently small then
$$
|\{n \leq x : \, n \equiv a \pmod{q}, \, p|n \Rightarrow p \in \mc{A} \cap (z,y]\}|  \geq (1+o(1))\frac{e^{-\gamma}\Psi(x,y)}{2\phi(q) \log z},
$$
By the well-known lower bound $\Psi(x,y) \gg x u^{-(1+o(1))u}$ given in \cite{CEP} (valid in our range $y \geq \exp(\sqrt{\log x})$), the claim now follows. 
\end{proof}
\begin{proof}[Proof of Proposition \ref{prop:bddOmega} assuming Proposition \ref{prop:soundCond}]
Let $x = N^{1/c}$, $y = N^c$ and $z = N^{c^2}$, so that $u = c^{-2}$ and $v = c^{-3}$, and $y \leq N \leq y^{1/c}$. Assuming that $c^{-1} \geq \log(2/c^2)^2$, that $A$ is sufficiently large in terms of $c$ and $\eta$, and finally that $N$ is sufficiently large relative to $A$, Proposition \ref{prop:soundCond} yields
$$
|\{n \leq N^{1/c} : \, n \equiv a \pmod{N}, \, p|n \Rightarrow p \in \mc{A} \cap (N^{c^2},N^c]\}| \geq 1.
$$
But for any $n$ in this latter set,
$$
N^{1/c} \geq n \geq P^-(n)^{\Omega(n)} > N^{c^2\Omega(n)},
$$
so setting $K := c^{-3}$ we find that $\Omega(n) \leq K$, as claimed.
\end{proof}

\subsection{Dilation by arbitrary residue classes}
We apply Propositions \ref{prop:bddOmega} and \ref{prop:uncond} in order to prove Proposition \ref{prop:PbSmooth} below (the notation of which is the same as in the aforementioned two propositions). To this end, we extend the sequence $(g(p))_{p \leq N^c}$ by zeros and thence define a completely multiplicative function $g(n)$ in the obvious way.
\begin{prop} \label{prop:PbSmooth}
Let $c > \const$. For every non-zero $b \pmod{N}$ there is a positive integer $P_b \equiv b \pmod{N}$ satisfying the following properties: 
\begin{enumerate}[(i)]
\item $P_b < N^{2K_1}$, $\Omega(P_b) \leq 2K_2$ and $P^+(P_b) \leq N^{c}$,
\item if $b$ is a quadratic residue modulo $N$ then $g(P_{b}) = +1$, and
\item we have 
$$
\max_{b \in (\mb{Z}/N\mb{Z})^{\times}}\frac{1}{N} \sum_{a \pmod{N}} \left|S_{f}(ab) - g(P_b)S_{f}(a)\right|^2 \ll \frac{N}{M}.
$$
\end{enumerate}
Moreover, suppose we are given $\eta >0$ and $\mc{A} \subset \mb{P} \cap (0,N)$ with 
$$
\sum_{\ss{p < N \\ p \notin \mc{A}}} \frac{1}{p} \leq \eta.
$$ 
If $N$ is a large prime of $A$-Littlewood type then we may take any $c \in (0,1/2)$, and provided $\eta \leq 0.1 c^{4}/\log(2/c^2)^2$ and $A$ is sufficiently large in terms of $\eta$, then we may replace (i) above by
$$
P_b < N^{2/c}, \, \Omega(P_b) \leq 2K, \text{ and } p|P_b \Rightarrow p \in \mc{A} \cap [2,N^c].
$$
\end{prop}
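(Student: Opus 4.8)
The plan is to combine the covering results of Propositions \ref{prop:uncond} and \ref{prop:bddOmega} with a telescoping/triangle-inequality argument to transport the dilation estimate \eqref{eq:unifSf} from individual primes to a single product representing $b$. Fix a non-zero $b \pmod N$. In the unconditional case, Proposition \ref{prop:uncond} (applied with $q=N$) furnishes an integer $n_b < N^{K_1}$ with $P^+(n_b) \le N^c$, $\Omega(n_b) \le K_2$ and $n_b \equiv b \pmod N$. If $b$ is a quadratic residue mod $N$, I want to force $g(n_b) = +1$: writing $n_b = p_1\cdots p_r$ with $r \le K_2$, the parity of $\#\{j : g(p_j) = -1\}$ need not match $\chi(b)$ a priori, but multiplying $n_b$ by a suitable auxiliary product (again drawn from the covering set, and congruent to $1 \pmod N$) of controlled length lets me adjust the sign of $g$ on the product without changing the residue class — this is why the bounds in (i) are stated as $N^{2K_1}$, $\Omega \le 2K_2$ rather than $N^{K_1}$, $\Omega \le K_2$. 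Concretely, pick any prime $p_0 \le N^c$ with $g(p_0) = -1$ if one exists (if none exists then $g \equiv +1$ on $[2,N^c]$ and there is nothing to adjust once we know $\chi(b)=+1$ matches), represent $p_0^{-1} \cdot p_0^{-1} \equiv \text{(something)} $... more cleanly: represent the residue class $b \cdot p_0^{-1} \pmod N$ by a second product from Proposition \ref{prop:uncond} and set $P_b = p_0 \cdot (\text{that product})$; iterating the parity check at most once yields $g(P_b) = \chi(b) = +1$. Thus $P_b \equiv b$, $P_b < N^{2K_1}$, $\Omega(P_b)\le 2K_2$, $P^+(P_b)\le N^c$, establishing (i) and (ii).

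For (iii), write $P_b = q_1 q_2 \cdots q_s$ with $s = \Omega(P_b) \le 2K_2$ and each $q_j \le N^c$ a prime (with multiplicity). Since $g$ is completely multiplicative, $g(P_b) = \prod_{j} g(q_j)$. Telescoping,
$$
S_f(ab) - g(P_b) S_f(a) = \sum_{j=1}^{s} \Big(\prod_{i<j} g(q_i)\Big) \Big( S_f\big(a q_1\cdots q_j\big) - g(q_j) S_f\big(a q_1 \cdots q_{j-1}\big) \Big),
$$
where I have used $ab \equiv a q_1\cdots q_s \pmod N$ (so $S_f(ab) = S_f(a q_1\cdots q_s)$, the exponential sum depending only on the residue class) and the telescoping collapses correctly because $|\prod_{i<j} g(q_i)| = 1$. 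Applying the triangle inequality in $L^2(\mb{Z}/N\mb{Z})$, then Cauchy–Schwarz over the $s \le 2K_2$ terms, and finally the substitution $a \mapsto a (q_1\cdots q_{j-1})^{-1}$ (a bijection of $\mb{Z}/N\mb{Z}$, legitimate since each $q_i \ne N$ as $q_i \le N^c < N$), each summand's $L^2$-average is bounded by $\tfrac1N\sum_{a}|S_f(aq_j) - g(q_j)S_f(a)|^2 \le N/M$ by hypothesis \eqref{eq:unifSf}. Hence $\tfrac1N\sum_a |S_f(ab)-g(P_b)S_f(a)|^2 \ll_{K_2} N/M$, and since $K_2$ depends only on $c$ this gives (iii).

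The conditional variant is identical in structure, replacing Proposition \ref{prop:uncond} by Proposition \ref{prop:bddOmega}: under the $A$-Littlewood hypothesis and the condition $\eta \le 0.1 c^4/\log(2/c^2)^2$, Proposition \ref{prop:bddOmega} provides, for each $a$, an integer $n \le N^{1/c}$ with $n \equiv a \pmod N$, $\Omega(n) \le K$, and all prime factors in $\mc{A}\cap[2,N^c]$; the same parity-adjustment (now using an auxiliary prime $p_0 \in \mc{A}\cap[2,N^c]$ with $g(p_0)=-1$, if one exists) doubles the length and size budgets to $\Omega(P_b)\le 2K$, $P_b < N^{2/c}$, while keeping all prime factors in $\mc{A}\cap[2,N^c]$; and the telescoping identity together with the hypothesis \eqref{eq:genA} (which now controls exactly the primes in $\mc{A}\cap[2,N^c]$ — precisely the primes that can appear in $P_b$) yields (iii) with an implied constant depending on $K = K(c)$.

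The main obstacle is the parity-adjustment step ensuring $g(P_b) = +1$ when $b$ is a quadratic residue: one must verify that an auxiliary prime $p_0$ with $g(p_0) = -1$ (in the relevant range, resp.\ in $\mc{A}$) can always be found \emph{or} that its absence already forces the desired conclusion, and that multiplying by it — and re-covering the shifted residue class — does not blow the $\Omega$ and size budgets beyond the factor-of-two slack. I expect this to be a short but slightly fiddly case analysis; everything else (the telescoping, the Cauchy–Schwarz, the change of variables) is routine once the covering propositions are in hand.
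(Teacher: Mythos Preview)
Your telescoping argument for (iii) is correct and matches the paper's proof essentially verbatim: write $P_b$ as a product of at most $2K_2$ primes, telescope, apply Cauchy--Schwarz over the (boundedly many) terms, and change variables $a\mapsto a\,(q_1\cdots q_{j-1})^{-1}$ to reduce each term to the hypothesis \eqref{eq:unifSf}.

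The gap is in your handling of (ii). Your parity-adjustment does not terminate: if $g(n_b)=-1$ you propose to set $P_b=p_0\cdot m$ with $m$ a representative of $bp_0^{-1}$ and $g(p_0)=-1$, so that $g(P_b)=-g(m)$. But $g(m)$ is completely uncontrolled --- Proposition~\ref{prop:uncond} gives you \emph{some} representative, not one with a prescribed $g$-value --- so $g(P_b)$ may still be $-1$. ``Iterating at most once'' does not help: replacing $m$ by $p_0\cdot m'$ with $m'$ a representative of $bp_0^{-2}$ just pushes the problem to $g(m')$, and there is no parity constraint linking $g(n_b)$, $g(m)$, $g(m')$ that forces one of them to work. (Nor can you appeal to a multiplicity of representatives: the covering propositions only assert existence.)

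The paper sidesteps this entirely with a one-line trick: if $b$ is a quadratic residue, pick $a$ with $b\equiv a^2\pmod N$, take the representative $n_a$ of $a$ from Proposition~\ref{prop:uncond}, and set $P_b:=n_a^2$. Then $P_b\equiv b$, $P_b<N^{2K_1}$, $\Omega(P_b)=2\Omega(n_a)\le 2K_2$, $P^+(P_b)\le N^c$, and crucially $g(P_b)=g(n_a)^2=+1$ automatically, since $g$ is $\pm1$-valued and completely multiplicative. This is exactly why the bounds in (i) carry the factor of $2$. For non-residues $b$ one simply takes $P_b=n_b$ (condition (ii) is vacuous there). The conditional variant is identical, with Proposition~\ref{prop:bddOmega} supplying $n_a$.
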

\begin{proof}
First, let us show that if, for all non-zero $b \pmod{N}$ there exists $P_b \equiv b \pmod{N}$ such that condition (i) holds, then condition (iii) holds as well. In this case we can write $P_b = p_1\cdots p_k$ with $p_1 \leq p_2 \leq \cdots \leq p_k$ and $k = \Omega(P_b) \leq 2K_2 = O(1)$. Since $S_f(ba) = S_f(P_ba)$, telescoping and applying the Cauchy-Schwarz inequality, we have\footnote{Here, if $j =0$ then we interpret the (empty) product $p_1\cdots p_j$ as $1$.}
\begin{align*}
&\frac{1}{N}\sum_{a \pmod{N}} |S_{f}(ba) - g(P_b) S_{f}(a)|^2 \\
&= \frac{1}{N} \sum_{a \pmod{N}} \left|\sum_{0 \leq j \leq k-1} (-1)^{k-1-j}(S_{f}(p_1\cdots p_{j+1}a) - g(p_{j+1})S_{f}(p_1\cdots p_ja))\right|^2 \\
&\ll k \sum_{0 \leq j \leq k-1} \left(\frac{1}{N} \sum_{a \pmod{N}} \left|S_{f}(p_1\cdots p_{j+1}a) - g(p_{j+1})S_{f}(p_1\cdots p_ja)\right|^2\right) \\
&\ll k^2 \max_{0 \leq j \leq k-1} \left(\frac{1}{N} \sum_{d \pmod{N}} \left|S_{f}(p_{j+1}d) - g(p_{j+1})S_{f}(d)\right|^2\right) \\
&\ll \max_{p \leq N^{c}} \frac{1}{N} \sum_{d\pmod{N}} \left|S_{f}(pd) - g(p) S_{f}(d)\right|^2,
\end{align*}
making the invertible change of variables $d \equiv p_1\cdots p_j a \pmod{N}$ for each $0 \leq j \leq k-1$ in the second-to-last line, then taking the maximum over $j$. Since this bound holds uniformly over all $b \in (\mb{Z}/N\mb{Z})^{\times}$, applying \eqref{eq:unifSf} gives
$$
\max_{b \in (\mb{Z}/N\mb{Z})^{\times}} \frac{1}{N} \sum_{a \pmod{N}} |S_{f}(ba) - g(P_b) S_{f}(a)|^2 \ll \frac{N}{M}.
$$
Thus, provided the $P_b$ satisfy condition (i) then they also satisfy condition (iii). \\
We now show that a choice of $(P_b)_b$ exists for which both conditions (i) and (ii) hold. By Proposition \ref{prop:uncond}, for every $1 \leq b< N$ we can find $n_b \equiv b \pmod{N}$ with 
$$
P^+(n_b) \leq N^{c}, \, n_b < N^{K_1} \text{ and } \Omega(n_b) \leq K_2.
$$ 
If $b$ is a quadratic non-residue then we set $P_b := n_b$. \\
Next, suppose $b \equiv a^2 \pmod{N}$ for some $a \in (\mb{Z}/N\mb{Z})^{\times}$. We have that $n_a^2 \equiv b \pmod{N}$ and
$$
n_a^2 < N^{2K_1}, \, \Omega(n_a^2) = 2 \Omega(n_a) \leq 2K_2, \text{ and } P^+(n_a^2) \leq N^{c}. 
$$
Setting $P_b := n_a^2$, we necessarily have $g(P_b) = g(n_a^2) = +1$, and condition (ii) holds. Since for this choice of $(P_b)_b$ condition (i) must hold we get that condition (iii) holds, and the first statement of the lemma is proved. \\
The second statement follows similarly, applying Proposition \ref{prop:bddOmega} in place of Proposition \ref{prop:uncond}.
\end{proof}
\section{Proof of Theorem \ref{thm:dil}}
\subsection{From exponential sums to mean values}
We recall our notation $\chi(n) = \left(\tfrac{n}{N}\right)$, for $N$ a large prime. 
Using Proposition \ref{prop:PbSmooth} we establish the following key claim that will be of use in proving both our unconditional and conditional results.
\begin{prop}\label{prop:LiouCorrel}
Suppose that
\begin{equation}\label{eq:exttob}
\max_{b \in (\mb{Z}/N\mb{Z})^\times} \frac{1}{N} \sum_{a \pmod{N}} \left|S_{f}(ab) -g(P_b)S_{f}(a)\right|^2 = O(N/M),
\end{equation}
where the integers $P_b$ are as in Proposition \ref{prop:PbSmooth}. Then
provided $M$ is sufficiently large, there is a real character $\psi$ modulo $N$ such that
\begin{equation}
\frac{1}{N} \sum_{n < N} f(n) \psi(n) = 1 + O(M^{-1/2}).
\end{equation}
\end{prop}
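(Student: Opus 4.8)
The plan is to use the Plancherel/Parseval identity modulo $N$ to convert the hypothesis \eqref{eq:exttob} into a statement about the multiplicative structure of $f$ along short sums. Writing the square out and expanding the Fourier side, note that $\frac{1}{N}\sum_{a}|S_f(ab)|^2 = \frac{1}{N}\sum_a |S_f(a)|^2$ by the change of variables $a \mapsto ab$ (valid since $b$ is invertible), and each of these equals $\sum_{n<N} f(n)^2 = N-1$ by Plancherel. The cross term $\frac{1}{N}\sum_a S_f(ab)\overline{S_f(a)}$ expands, via orthogonality of additive characters modulo $N$, into $\sum_{1\le m,n<N} f(m)\overline{f(n)} \cdot \frac{1}{N}\sum_a e(a(mb-n)/N) = \sum_{\substack{1\le m,n<N\\ mb\equiv n\,(N)}} f(m)f(n)$. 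Thus \eqref{eq:exttob} becomes, for every invertible $b$,
\begin{equation*}
2(N-1) - 2\,g(P_b)\,\mathrm{Re}\!\!\sum_{\substack{1\le m,n<N\\ mb\equiv n\,(N)}} f(m)f(n) = O(N/M),
\end{equation*}
i.e. $\sum_{mb\equiv n} f(m)f(n) = g(P_b)(N + O(N/M))$; in particular $g(P_b) = +1$ for all invertible $b$ once $M$ is large, and $\sum_{b\in(\mb{Z}/N\mb{Z})^\times}\sum_{mb\equiv n\,(N)} f(m)f(n) = (N-1)(N+O(N/M))$.

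Next I would reorganise the double sum over all $b$: the pairs $(m,n)$ with $1\le m,n<N$ and $mb\equiv n\pmod N$ for some invertible $b$ are exactly all pairs (since $m,n$ are automatically invertible mod $N$), and $b$ is then determined as $b\equiv nm^{-1}$. So $\sum_b \sum_{mb\equiv n} f(m)f(n) = \sum_{1\le m,n<N} f(m)f(n) = S_f(0)^2$ — wait, that double-counts nothing but gives $\big(\sum_{n<N}f(n)\big)^2$, which is the wrong order of magnitude. The correct move is instead to fix $b$ and use a hybrid argument: the congruence $mb\equiv n\pmod N$ with $m,n\in[1,N-1]$ means that for each $m$, $n$ is the unique residue of $mb$ in $[1,N-1]$, so $\sum_{mb\equiv n}f(m)f(n) = \sum_{m<N} f(m) f(\langle mb\rangle_N)$ where $\langle\cdot\rangle_N$ denotes reduction into $[1,N-1]$. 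Applying this with $b = P_b$ and invoking Proposition \ref{prop:PbSmooth}(ii), which guarantees $g(P_b)=+1$ for $b$ a quadratic residue, the constraint forces $\sum_{m<N} f(m)f(\langle mb\rangle_N) = N + O(N/M)$ for all quadratic residues $b$; averaging this over a suitable multiplicative character argument, or rather detecting the quadratic-residue condition with $\tfrac12(1+\chi(b))$, should isolate a character $\psi$ for which $\frac1N\sum_{n<N} f(n)\psi(n)$ is forced to be $1+O(M^{-1/2})$.

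The cleanest route, and the one I expect the author takes, is: apply the identity $\sum_{m<N}f(m)f(\langle mb\rangle_N) = N+O(N/\sqrt M)$ for quadratic-residue $b$, expand $f$ in terms of Dirichlet characters modulo $N$ via $f(n) = \sum_{\xi} c_\xi \xi(n)$ with $c_\xi = \frac1{N-1}\sum_{n<N}f(n)\bar\xi(n)$ (here I must be careful that the short-interval truncation $n<N$ versus $n\le N-1$ is harmless), so that the multiplicative convolution over invertible $b$ turns $\sum_b \big|\sum_m f(m)f(\langle mb\rangle)\big|$-type quantities into $\sum_\xi |c_\xi|^2 \cdot(N-1)^2$-type quantities; the hypothesis then says $\sum_\xi |c_\xi|^4$ (or a weighted version, with the quadratic-residue weight selecting even characters) is close to its maximal possible value $\max_\xi|c_\xi|^2$, which by the bound $\sum_\xi|c_\xi|^2 = 1$ (Parseval, since $|f|=1$) forces a single character $\psi$ to have $|c_\psi|^2 = 1+O(M^{-1/2})$, and since $f$ is real one checks $\psi$ is real and $c_\psi$ is real, giving $c_\psi = 1+O(M^{-1/2})$, i.e. the claim.

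The main obstacle I anticipate is the bookkeeping around the reduction map $m\mapsto\langle mb\rangle_N$: it is \emph{not} multiplicative, so writing $f(\langle mb\rangle_N)$ in terms of $f(m)f(b)$ is illegitimate, and the whole point of Proposition \ref{prop:PbSmooth} is precisely to replace the residue $b$ by an \emph{actual integer} $P_b$ built from small primes so that $f(P_b n) = f(P_b)f(n) = g(P_b)f(n)$ whenever $(n,P_b)=1$ — but then $\langle P_b n\rangle_N$ still differs from $P_b n$ by a multiple of $N$, so one genuinely needs the exponential-sum formulation (where $S_f(P_b a) = S_f(ba)$ is an \emph{exact} identity of the Fourier coefficients, with no reduction issue) rather than the congruence-sum formulation. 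So the correct skeleton is: Plancherel converts \eqref{eq:exttob} to $\sum_\xi |c_\xi|^2|1-g(P_b)\xi(P_b)|^2 = O(1/M)$ for every invertible $b$; since $\xi(P_b) = \xi(b)$, this reads $\sum_\xi |c_\xi|^2 |1 - g(P_b)\xi(b)|^2 = O(1/M)$; summing over $b$ in a progression or using $g(P_b)=+1$ on quadratic residues and averaging the weight $\tfrac12(1+\chi(b))$ kills all $\xi$ except those with $\xi = \psi$ for a single real $\psi$, pinning $|c_\psi| = 1+O(M^{-1/2})$; reality of $f$ gives $c_\psi\in\R$ and hence $c_\psi>0$, so $\frac1N\sum_{n<N}f(n)\psi(n) = \frac{N-1}{N}c_\psi + O(1/N) = 1+O(M^{-1/2})$, completing the proof.
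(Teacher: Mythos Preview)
Your Dirichlet-character expansion is a clean and genuinely different route from the paper's. The identity you arrive at in the last paragraph is correct: writing $c_\xi = \tfrac{1}{N-1}\sum_{n<N} f(n)\bar\xi(n)$ and using $S_\xi(a) = \bar\xi(a)\tau(\xi)$ for $a\neq 0$, one does obtain (after bookkeeping with the $a=0$ and $\xi=\xi_0$ terms)
\[
\sum_{\xi}|c_\xi|^2\,|\xi(b)-g(P_b)|^2 \ll \tfrac{1}{M}\quad\text{for every }b\in(\mb{Z}/N\mb{Z})^\times.
\]
Averaging over quadratic residues $b$ (where $g(P_b)=+1$ by Proposition~\ref{prop:PbSmooth}(ii)) then gives $\sum_{\xi\neq\xi_0,\chi}|c_\xi|^2 \ll 1/M$, so $|c_{\xi_0}|^2+|c_\chi|^2 = 1+O(1/M)$ by Parseval.

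However, two steps of your sketch do not go through as written. First, the claim ``$g(P_b)=+1$ for all invertible $b$ once $M$ is large'' is false: if $f=\chi$ and $b$ is a non-residue then $\sum_{mb\equiv n}f(m)f(n)=-(N-1)$, forcing $g(P_b)=-1$. Second, and more seriously, nothing you have said rules out $|c_{\xi_0}|^2\approx|c_\chi|^2\approx\tfrac12$; your assertion that ``$\sum_\xi|c_\xi|^4$ close to $\max_\xi|c_\xi|^2$ forces a single $\psi$'' is not derived from the hypothesis, and the quadratic-residue average alone cannot separate $\xi_0$ from $\chi$. The missing observation is that since $f$ is $\pm1$-valued, $|c_{\xi_0}\pm c_\chi| = \big|\tfrac{2}{N-1}\sum_{\chi(n)=\pm1}f(n)\big|\le 1$; combined with $(c_{\xi_0}+c_\chi)^2+(c_{\xi_0}-c_\chi)^2=2+O(1/M)$ this forces one of $c_{\xi_0},c_\chi$ to be $\pm1+O(1/M)$. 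Finally, ``$c_\psi\in\R$ hence $c_\psi>0$'' is a non-sequitur: you need the Granville--Soundararajan spectrum bound (Lemma~\ref{lem:spectrum}) that $\tfrac1N\sum_{n<N}(f\psi)(n)\ge\delta_1+o(1)>-2/3$ to exclude $c_\psi\approx-1$.

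The paper instead splits into cases on whether some $g(p)=-1$ (in which case the $a=0$ term of \eqref{eq:exttob} already forces $|c_{\xi_0}|\ll M^{-1/2}$), and then argues more directly via Lemma~\ref{lem:eta} that $S_f(a)\approx\tau g(P_a)\sqrt{N}$ for some $\tau\in S^1$, computing $\sum_n f(n)\chi(n)$ from this and pinning down $\tau$ via Lemma~\ref{lem:spectrum}. Your character-coefficient route, once patched as above, is arguably more transparent and in fact yields the sharper error $O(1/M)$.
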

We will require a few auxiliary results to prove this proposition. The first is a well-known theorem of Granville and Soundararajan \cite[Thm. 1]{GraSou}.
\begin{lem} \label{lem:spectrum}
There is a constant $\delta_1 > -2/3$ such that the following holds. For any multiplicative function $f : \mb{N} \ra [-1,1]$ we have
$$
\frac{1}{x} \sum_{n \leq x} f(n) \geq \delta_1 + o(1).
$$
\end{lem}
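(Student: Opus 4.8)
This is the sharp lower bound on the spectrum of real multiplicative functions, due to Granville and Soundararajan, and the plan is to follow their argument. First I would reduce to the case that $f$ is \emph{completely} multiplicative with $f(p)\in\{-1,+1\}$ for all primes $p$: by an exchange/rearrangement argument the infimum of $\liminf_{x\to\infty}\tfrac1x\sum_{n\le x}f(n)$ over all multiplicative $f:\mb{N}\ra[-1,1]$ is approached by such functions, since pushing each value $f(p)$ to an endpoint can only decrease the mean value, while the values at higher prime powers $p^k$, $k\ge2$, influence it only to lower order. So it suffices to bound $\tfrac1x\sum_{n\le x}f(n)$ from below for such $f$.

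Next I would set up the basic recursion. Writing $S(t):=\sum_{n\le t}f(n)$ and using $\log n=\sum_{d\mid n}\Lambda(d)$ together with complete multiplicativity to rearrange $\sum_{n\le x}f(n)\log n$, one obtains
$$
S(x)\log x=\int_1^x\frac{S(t)}{t}\,dt+\sum_{p^k\le x}\Lambda(p^k)f(p^k)\,S\!\left(\tfrac{x}{p^k}\right).
$$
Dividing by $x\log x$, using $|S(t)|\le t$ and the prime number theorem in the shape $\sum_{p^k\le y}\Lambda(p^k)/p^k=\log y+O(1)$, and passing to the logarithmic variable $u=\log x$, this turns into a self-improving integral inequality for $\sigma(u):=S(e^u)e^{-u}$. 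Because $f(p^k)\in\{-1,+1\}$, the extremal choice is $f(p^k)=-\operatorname{sgn}S(x/p^k)$, and setting $\delta:=\liminf_{u\to\infty}\sigma(u)$ forces an inequality of the form $\delta\ge\Phi(\delta)$ governed by a delay/integral equation for the worst-case profile.

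The last step is to solve that variational problem and pin down the constant. The minimising profile comes (in the limit) from completely multiplicative $\pm1$-valued functions whose sign flips at an explicit power $x^{\theta_0}$, and optimising over $\theta_0$ produces
$$
\delta_1=1-2\log\!\bigl(1+\sqrt e\bigr)+4\int_1^{\sqrt e}\frac{\log t}{1+t}\,dt=-0.656999\ldots,
$$
which is indeed $>-2/3$. The hard part will be exactly this final step: it is a genuine calculus-of-variations computation, in which one must show that no admissible $\sigma$ dips below $\delta_1$ and that the extremal value is precisely the constant above rather than something smaller; controlling the error terms in the recursion uniformly as $u\to\infty$ is a secondary technical point. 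Since only the qualitative inequality $\delta_1>-2/3$ is needed here, one may alternatively just invoke \cite[Thm.~1]{GraSou}, as we do.
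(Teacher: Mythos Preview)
The paper does not prove this lemma at all: it is stated as a direct quotation of \cite[Thm.~1]{GraSou} and used as a black box. Your proposal goes further and sketches the actual Granville--Soundararajan argument (reduction to completely multiplicative $\pm1$-valued $f$, the $\log$-weighted recursion, and the resulting delay/integral variational problem yielding $\delta_1=1-2\log(1+\sqrt e)+4\int_1^{\sqrt e}\tfrac{\log t}{1+t}\,dt\approx -0.657$), which is a faithful outline of the cited work. Since your closing sentence already notes that one may simply invoke \cite[Thm.~1]{GraSou}, you and the paper are in agreement; the extra sketch is correct in spirit but strictly more than the paper provides or requires.
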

As a first instance of Proposition \ref{prop:LiouCorrel}, we resolve the case in which $\psi$ is the principal character modulo $N$ (i.e. $\psi(n) = 1$ for all $n < N$). 
\begin{lem} \label{lem:psiTriv}
Assume the hypotheses of Proposition \ref{prop:LiouCorrel}, and that $g(p) = +1$ for all $p \leq N^c$. Then, provided $M$ is sufficiently large,
$$
\frac{1}{N} \sum_{n < N} f(n) = 1 + O(M^{-1/2}).
$$
\end{lem}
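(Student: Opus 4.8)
The plan is to leverage the hypothesis \eqref{eq:exttob} to show that the $L^2$-mass of $S_f$ is concentrated in a single frequency, namely $a \equiv 0$, which via Plancherel forces $\left|\sum_{n<N} f(n)\right|$ to be nearly as large as $N$. First I would exploit Proposition \ref{prop:PbSmooth}: since $g(p) = +1$ for all $p \leq N^c$, the associated completely multiplicative $g$ satisfies $g(P_b) = +1$ for \emph{every} non-zero $b \pmod N$ (not just the quadratic residues). Hence the hypothesis \eqref{eq:exttob} reads
$$
\max_{b \in (\mb{Z}/N\mb{Z})^\times} \frac{1}{N}\sum_{a \pmod N} |S_f(ab) - S_f(a)|^2 = O(N/M).
$$
Averaging this over $b \in (\mb{Z}/N\mb{Z})^\times$ and expanding the square, one gets $\frac{2}{N(N-1)}\sum_b \sum_a (|S_f(a)|^2 - \operatorname{Re}(S_f(ab)\overline{S_f(a)})) = O(N/M)$. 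For fixed $a \neq 0$, as $b$ ranges over $(\mb{Z}/N\mb{Z})^\times$ the product $ab$ ranges over all non-zero residues, so $\sum_b S_f(ab) = \left(\sum_{a' \neq 0} S_f(a')\right) - 0 = $ (a fixed quantity independent of $a$), and in fact $\sum_{a' \pmod N} S_f(a') = N f(... )$—more precisely $\sum_{a \pmod N} S_f(a) = N \cdot \mathbf{1}[\text{...}]$; one computes $\sum_{a\pmod N} S_f(a) = \sum_{1\le n<N} f(n) \sum_{a} e(an/N) = 0$ since $N \nmid n$. So $\sum_{a'\neq 0} S_f(a') = -S_f(0)$. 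Thus the cross term is controlled by $S_f(0)$, and rearranging yields that $\sum_{a \neq 0}|S_f(a)|^2$ is small compared to the diagonal unless $|S_f(0)|$ is large.

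More carefully, the correct route is: from the above, for each individual $a \neq 0$ we have $|S_f(a)|^2 = O\!\left(\tfrac{1}{N-1}\sum_b |S_f(ab)-S_f(a)|^2 \cdot (\text{something})\right)$ — actually the cleanest approach is to sum $\frac{1}{N}\sum_a |S_f(ab) - S_f(a)|^2 = O(N/M)$ over a telescoping-free argument. Instead I would argue directly: fix any $a_0 \neq 0$; for every $a \neq 0$ there is $b$ with $ab \equiv a_0$, so $|S_f(a) - S_f(a_0)| = |S_f(a) - S_f(ab)| \le \left(\sum_{a'}|S_f(a'b)-S_f(a')|^2\right)^{1/2} \cdot$ — this doesn't isolate a single term. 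The honest fix: average $\frac1N\sum_a|S_f(ab)-S_f(a)|^2 = O(N/M)$ in $b$ and note the left side equals $\frac{2}{N}\sum_{a\neq0}|S_f(a)|^2 - \frac{2}{N}\operatorname{Re}\sum_{a\neq 0} S_f(ab)\overline{S_f(a)} + O(|S_f(0)|^2/N)$-type terms; averaging over $b$ the middle term becomes $\frac{2}{N}\operatorname{Re}\left(\overline{\frac{1}{N-1}\sum_{a\neq0}S_f(a)}\right)\left(\sum_{a\neq0}S_f(a)\right) = O(|S_f(0)|^2/N)$. Hence $\sum_{a \neq 0}|S_f(a)|^2 = O(N^2/M) + O(|S_f(0)|^2)$. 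By Plancherel, $\sum_{a\pmod N}|S_f(a)|^2 = N\sum_{n<N}|f(n)|^2 = N(N-1)$, so $|S_f(0)|^2 = N(N-1) - \sum_{a\neq0}|S_f(a)|^2 \ge N(N-1) - O(N^2/M) - O(|S_f(0)|^2)$, giving $|S_f(0)|^2 = N^2(1 + O(M^{-1}))$, i.e. $\left|\frac1N\sum_{n<N}f(n)\right| = 1 + O(M^{-1})$. Finally I would upgrade from absolute value to signed value: since each $f(n) \in \{-1,+1\}$ and the mean has absolute value $1 + O(M^{-1}) \ge 1 - O(M^{-1})$, for $M$ large the mean cannot be close to $-1$ — one needs $|\{n : f(n) = -1\}|$ to be $O(N/M)$, and the same count would have to be $N - O(N/M)$ for the mean near $-1$, impossible for $M$ large; alternatively invoke Lemma \ref{lem:spectrum} which bounds the mean below by $\delta_1 + o(1) > -2/3$, ruling out the negative case outright. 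This pins down $\frac1N\sum_{n<N}f(n) = 1 + O(M^{-1/2})$ (the weaker $M^{-1/2}$ comfortably absorbing the $M^{-1}$).

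The main obstacle I anticipate is bookkeeping the role of the $a \equiv 0$ frequency correctly throughout: the dilation action $a \mapsto ab$ fixes $0$, so the hypothesis says nothing \emph{directly} about $S_f(0)$, and one must be careful that the cross-term averages do not secretly depend on $S_f(0)$ in a way that breaks the Plancherel budget. The resolution — that $\sum_{a\pmod N}S_f(a) = 0$ exactly, so the average of $S_f(ab)$ over $b$ equals $-S_f(0)/(N-1)$, which is negligible — is clean, but it is the step that has to be executed with care. The final sign-fixing step (ruling out mean $\approx -1$) is routine given Lemma \ref{lem:spectrum}, and I would cite it for that purpose.
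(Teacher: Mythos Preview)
Your approach is the paper's: observe $g(P_b)=+1$ for all $b$, average the hypothesis over $b$, use $\sum_{a \pmod N} S_f(a)=0$ (so $\sum_{a\neq 0}S_f(a)=-S_f(0)$) to evaluate the cross term, combine with Plancherel to pin down $|S_f(0)|$, and finish with Lemma~\ref{lem:spectrum} to fix the sign.

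There is one bookkeeping slip that, as written, breaks the final deduction. The averaged cross term you wrote down,
\[
\frac{2}{N}\operatorname{Re}\Bigl(\overline{\tfrac{1}{N-1}\textstyle\sum_{a\neq0}S_f(a)}\Bigr)\Bigl(\textstyle\sum_{a\neq0}S_f(a)\Bigr)
\;=\;\frac{2}{N(N-1)}\,|S_f(0)|^2,
\]
is $O(|S_f(0)|^2/N^2)$, not merely $O(|S_f(0)|^2/N)$. With your weaker bound, after clearing the $2/N$ you obtain $\sum_{a\neq 0}|S_f(a)|^2 = O(N^2/M)+O(|S_f(0)|^2)$ with a \emph{fixed} implied constant, and then the inequality $|S_f(0)|^2 \geq N(N-1)-O(N^2/M)-C|S_f(0)|^2$ only yields $|S_f(0)|^2 \gg N^2$, not $|S_f(0)|^2=N^2(1+O(M^{-1}))$. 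Keeping the exact coefficient instead gives
\[
\sum_{a\neq 0}|S_f(a)|^2 \;=\; \frac{|S_f(0)|^2}{N-1} + O(N^2/M),
\]
whence Plancherel yields $|S_f(0)|^2\cdot\tfrac{N}{N-1}=N(N-1)+O(N^2/M)$, i.e.\ $|S_f(0)|^2=(N-1)^2+O(N^2/M)=N^2(1+O(M^{-1}))$ (using $M\leq N$). With this correction your argument goes through and matches the paper's proof.
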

\begin{proof}
Assume that $g(p) = +1$ for all $p \leq N^c$. As $P_b$ is $N^c$-friable, $g(P_b) = +1$ for all $b \in (\mb{Z}/N\mb{Z})^\times$. In that case, it follows from \eqref{eq:exttob} and Plancherel's theorem that for each $b \in (\mb{Z}/N\mb{Z})^\times$,
$$
\frac{|S_f(0)|^2}{N} + \frac{1}{N} \text{Re} \asum_{a \pmod{N}} S_f(ab) \bar{S_f}(a) = N + O(N/M).
$$
Averaging over $b \in (\mb{Z}/N\mb{Z})^{\times}$, we get
$$
\frac{|S_f(0)|^2}{N} + \frac{1}{N(N-1)}\left| \, \asum_{a \pmod{N}} S_f(a)\right|^2  = N + O(N/M).
$$
But as $N$ is prime we obviously have
$$
\asum_{a \pmod{N}} S_f(a) = \sum_{a \pmod{N}} S_f(a) - S_f(0) = -S_f(0).
$$
Combined with the previous estimate, we find that
$$
|S_f(0)|^2 = N(N-1)\left(1+O(1/M)\right) = N^2 + O(N + N^2/M).
$$
and as $1 \leq M \leq N$, this yields the estimate
$$
\left|\frac{1}{N}\sum_{n < N} f(n)\right| = 1+O(M^{-1/2}). 
$$
If $M$ is sufficiently large then it follows from Lemma \ref{lem:spectrum} that $\sum_{n < N} f(n) > 0$, and the claim follows.
\end{proof}
We will next show that if the hypotheses of Proposition \ref{prop:LiouCorrel} hold, and $g(p) = -1$ for at least a single prime $p \leq N^c$, then $S_f(a/N)/\sqrt{N}$ lies near the unit circle for typical $a \pmod{N}$, consistent with the behaviour of the twisted Gauss sums $S_{\chi}$. 
\begin{lem}\label{lem:eta}
Assume that the hypotheses of Proposition \ref{prop:LiouCorrel} hold, and that $g(p) = -1$ for some $p \leq N^c$. Then there is a $\tau \in S^1$ 
such that 
$$
\frac{1}{N} \asum_{a \pmod{N}} \left|S_{f}(a) - \tau g(P_a)\sqrt{N}\right|^2 \ll N/M.
$$
\end{lem}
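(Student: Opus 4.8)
The plan is to prove the lemma in two moves: first upgrade the approximate dilation symmetry \eqref{eq:exttob} into the statement that $b\mapsto g(P_b)$ is \emph{exactly} a real Dirichlet character modulo $N$, and then use \eqref{eq:exttob} once more to show that $S_f(a)$ is, in mean square over $a$, a fixed scalar multiple of $g(P_a)$ with that scalar of modulus $\approx\sqrt N$. Throughout write $G(b):=g(P_b)\in\{+1,-1\}$ for $b\in(\mb{Z}/N\mb{Z})^\times$ and $\Phi(b):=G(b)S_f(b)$. Since $g(p)=-1$ for some $p\le N^c$, \eqref{eq:0freq} (cf.\ Remark~\ref{rem:smallS0}) gives $|S_f(0)|\ll N/\sqrt M$, so that $\asum_{a\pmod N}|S_f(a)|^2=N(N-1)-|S_f(0)|^2=N^2(1+O(1/M))$, while Parseval gives $\sum_{a\pmod N}|S_f(a)|^2=N(N-1)$.

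\emph{Step 1: $G$ is a character.} I would show $G(xy)=G(x)G(y)$ for \emph{all} $x,y$. If this failed for some $x,y$, then chaining the three instances $S_f(ax)=G(x)S_f(a)+e_2$, $S_f(axy)=G(y)S_f(ax)+e_1$ and $S_f(axy)=G(xy)S_f(a)+e_3$ of \eqref{eq:exttob} yields the identity $\bigl(G(xy)-G(x)G(y)\bigr)S_f(a)=G(y)e_2+e_1-e_3$, whence $2|S_f(a)|\le|e_1|+|e_2|+|e_3|$ for \emph{every} residue $a$. Squaring, summing over all $a\pmod N$, performing the substitution $a\mapsto ax$ in the sum of $|e_1|^2$, and invoking \eqref{eq:exttob} three times, one would obtain $N(N-1)=\sum_{a\pmod N}|S_f(a)|^2\ll N^2/M$, which is false once $M$ exceeds an absolute constant. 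Hence $G:(\mb{Z}/N\mb{Z})^\times\to\{\pm1\}$ is a homomorphism, i.e.\ a real character modulo $N$ (in fact $G=\chi$, by \eqref{eq:0freq} together with the argument of Lemma~\ref{lem:psiTriv}, though this is not needed below).

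\emph{Steps 2--3: the mean-square estimate.} Since $G$ is now \emph{exactly} multiplicative and $\pm1$-valued, $G(b)G(ba^{-1})=G(a)$, so $\Phi(b)=\Phi(a)+G(b)\bigl(S_f(a\cdot ba^{-1})-g(P_{ba^{-1}})S_f(a)\bigr)$ and therefore $|\Phi(a)-\Phi(b)|=|S_f(a\cdot ba^{-1})-g(P_{ba^{-1}})S_f(a)|$ for all nonzero $a,b$. Averaging over $a,b$ and substituting $c=ba^{-1}$ gives $\frac1{N^2}\asum_a\asum_b|\Phi(a)-\Phi(b)|^2\le \frac1N\asum_c\bigl(\frac1N\sum_{a\pmod N}|S_f(ac)-g(P_c)S_f(a)|^2\bigr)\ll N/M$ by \eqref{eq:exttob}. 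On the other hand this double average equals $\frac{2(N-1)^2}{N^2}$ times the variance $\mathrm{Var}:=\frac1{N-1}\asum_a|\Phi(a)|^2-\bigl|\frac1{N-1}\asum_a\Phi(a)\bigr|^2$ of $\Phi$ over $(\mb{Z}/N\mb{Z})^\times$, so $\mathrm{Var}\ll N/M$. Since $|\Phi(a)|=|S_f(a)|$ and $\asum_a|S_f(a)|^2=N^2(1+O(1/M))$, this forces $\mu:=\frac1{N-1}\asum_a\Phi(a)$ to satisfy $|\mu|^2=N(1+O(1/M))$; with $\tau:=\mu/|\mu|\in S^1$ (legitimate for $M$ large) we get $\bigl||\mu|-\sqrt N\bigr|\ll\sqrt N/M$. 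Combining $\frac1N\asum_a|\Phi(a)-\mu|^2=\frac{N-1}N\mathrm{Var}\ll N/M$ with this, and then multiplying inside the modulus by $G(a)$ (of modulus $1$, with $G(a)^2=1$) to rewrite $|\Phi(a)-\tau\sqrt N|=|S_f(a)-\tau\sqrt N\,g(P_a)|$, yields $\frac1N\asum_a|S_f(a)-\tau g(P_a)\sqrt N|^2\ll N/M$, as required.

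\emph{The main obstacle.} The heart of the argument is Step~1 — converting the \emph{approximate} dilation symmetry of $S_f$ into \emph{exact} multiplicativity of $b\mapsto g(P_b)$. Exactness is essential: it is what makes the telescoping in Step~2 produce a clean $O(N/M)$ bound, with no leftover ``cocycle defect'' term such as $\#\{b:G(b)G(ba^{-1})\ne G(a)\}$ that one would otherwise have to control. The mechanism behind Step~1 is that a \emph{single} multiplicativity failure of $G$ would, through the displayed identity, force $|S_f(a)|$ to be uniformly small in $a$, in flat contradiction with Parseval's identity. Beyond this the ingredients are routine — Plancherel/Parseval, Cauchy--Schwarz-type inequalities, and the standing assumption that $M$ is sufficiently large (needed both to make Step~1's contradiction bite and to ensure $\mu\ne0$, so that the phase $\tau$ is well defined).
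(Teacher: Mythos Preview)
Your argument is correct and takes a genuinely different route from the paper's. The paper never establishes that $b\mapsto g(P_b)$ is a character; instead it averages \eqref{eq:exttob} over $b$, pigeonholes to find a single residue $d$ with $\frac1N\asum_a|S_f(d)-g(P_{da^{-1}})S_f(a)|^2\ll N/M$, uses Plancherel and Cauchy--Schwarz to show $|S_f(d)|^2=N+O(N/\sqrt M)$, and then, setting $\eta(a):=g(P_{da^{-1}})$, derives the intermediate bound $\frac1N\asum_a|S_f(a)-e(\theta)\eta(a)\sqrt N|^2\ll N/M$. A second round of averaging and symmetrising in $a,b$ (equations \eqref{eq:etarel}--\eqref{eq:etarelswap}) then shows $\eta(a)$ is close in mean square to $\eta_0 g(P_a)$ for some sign $\eta_0$, and one takes $\tau=e(\theta)\eta_0$.

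Your Step~1 replaces all of this with a single rigidity observation: a \emph{single} failure $G(xy)\neq G(x)G(y)$ forces $2|S_f(a)|\le|e_1|+|e_2|+|e_3|$ pointwise in $a$, which after squaring and summing contradicts Parseval once $M$ exceeds an absolute constant. This is sharper than what the paper proves (exact rather than approximate multiplicativity of $G$), and it buys you the clean variance computation in Steps~2--3, with no need for the pigeonholed $d$ or the auxiliary $\eta$. The paper's approach is a bit softer in that it never needs the discreteness of $\{\pm1\}$ to turn an approximate identity into an exact one, but in the present $\pm1$-valued setting your route is shorter and yields the bonus that $g(P_a)$ is literally a real character modulo $N$ --- a fact the paper only extracts later, in the proof of Proposition~\ref{prop:LiouCorrel}, via the quadratic-residue decomposition \eqref{eq:QRs}. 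Note that your argument, like the paper's, only functions when $M$ exceeds an absolute constant; for bounded $M$ the conclusion is trivial (any $\tau\in S^1$ works), so this is harmless.
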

\begin{proof}
If we restrict \eqref{eq:exttob} to non-zero classes $a \pmod{N}$, then average the corresponding sum over residue classes $1 \leq b < N$ then we get
$$
\frac{1}{N(N-1)} \asum_{a,b \pmod{N}} \left|S_{f}(ba)-g(P_b) S_{f}(a)\right|^2 \ll N/M.
$$
Swapping orders of summation and making the change of variables $d \equiv ab \pmod{N}$, we find
$$
\frac{1}{N-1} \asum_{d \pmod{N}} \left(\frac{1}{N} \asum_{a \pmod{N}} \left|S_{f}(d) - g(P_{da^{-1}}) S_{f}(a)\right|^2\right) \ll N/M,
$$
so that in particular we can find a $d \pmod{N}$ (which we fix once and for all) such that
\begin{equation}\label{eq:simulClose}
\frac{1}{N} \asum_{a \pmod{N}} |S_{f}(d) - g(P_{da^{-1}}) S_{f}(a)|^2 \ll N/M.
\end{equation}
From this, Plancherel's theorem, the Cauchy-Schwarz inequality and Remark \ref{rem:smallS0}, we see that
\begin{align*}
|S_{f}(d)|^2  &= \frac{1}{N} \sum_{a \pmod{N}} |S_{f}(a)|^2 + \frac{1}{N} \asum_{a \pmod{N}} |S_{f}(d) - g(P_{da^{-1}})S_{f}(a)|^2 \\
&+ O\left(\frac{|S_{f}(0)|^2}{N} + \frac{1}{N} \asum_{a \pmod{N}} |S_{f}(d) - g(P_{da^{-1}}) S_{f}(a)| |S_{f}(a)|\right) \\
&= N + O(N/\sqrt{M}).
\end{align*}
We deduce, therefore, that there is a $\theta = \theta(N) \in \mb{R}$ such that 
$$
S_{f}(d) = e(\theta) \sqrt{N} + O(N^{1/2}M^{-1/2}).
$$ 
Setting $\eta(a) := g(P_{da^{-1}}) \in \{-1,+1\}$ for convenience, we obtain from \eqref{eq:simulClose} that
\begin{equation} \label{eq:propii}
\frac{1}{N} \asum_{a \pmod{N}} |S_{f}(a) - e(\theta)\eta(a) \sqrt{N}|^2 \ll N/M.
\end{equation}
Next, for each $1 \leq b < N$ the change of variables $a\mapsto ab \pmod{N}$ in \eqref{eq:propii} leaves the sum over $a$ invariant. This, together with \eqref{eq:exttob}, yields the related bounds
\begin{align*}
&\frac{1}{N} \asum_{a \pmod{N}} |S_{f}(ab) - e(\theta)\eta(ab)\sqrt{N}|^2 \ll N/M, \\
&\frac{1}{N} \asum_{a \pmod{N}} |S_{f}(ab) - g(P_b) S_{f}(a)|^2 \ll N/M, \\
&\frac{1}{N} \asum_{a \pmod{N}} |S_{f}(a) - e(\theta)\eta(a) \sqrt{N}|^2 \ll N/M.
\end{align*}
It follows from this and the Cauchy-Schwarz inequality that for each non-zero $b \pmod{N}$,  
$$
\frac{1}{N}\asum_{a \pmod{N}} |\eta(ab) - g(P_b) \eta(a)|^2 \ll M^{-1}.
$$
Averaging over $b$ as well, we get
\begin{equation}\label{eq:etarel}
\frac{1}{N^2}\asum_{a,b \pmod{N}} |\eta(ab) - g(P_b) \eta(a)|^2 \ll M^{-1}.
\end{equation}
On the other hand, swapping roles of $a$ and $b$, we have by the same argument that
\begin{equation}\label{eq:etarelswap}
\frac{1}{N^2}\asum_{a,b \pmod{N}} |\eta(ab) - g(P_a) \eta(b)|^2 \ll M^{-1}.
\end{equation}
Combining \eqref{eq:etarel} and \eqref{eq:etarelswap} with Cauchy-Schwarz, we conclude that
$$
\frac{1}{N^2} \asum_{a,b \pmod{N}} |\eta(b)g(P_a) - \eta(a)g(P_b)|^2 \ll M^{-1}.
$$
We thus deduce that there is a choice of non-zero $b_0 \pmod{N}$ such that if $\eta_0 := g(P_{b_0}) \eta(b_0)$ then
$$
\frac{1}{N} \asum_{a \pmod{N}} |\eta(a) - \eta_0g(P_a)|^2 \ll M^{-1}.
$$
Combining this with \eqref{eq:propii} and setting $\tau := e(\theta) \eta_0 \in S^1$ yields the claim.
\end{proof}
Finally, we will need the following result of Hall and Tenenbaum \cite{HT}.
\begin{lem}\label{lem:HT}
There is an absolute constant $\kappa > 0$ such that the following holds. Let $f: \mb{N} \ra [-1,+1]$ be a multiplicative function. Then for any $x \geq 3$,
$$
\sum_{n \leq x} f(n) \ll x \exp\left(-\kappa \sum_{p \leq x} \frac{1-f(p)}{p}\right). 
$$
\end{lem}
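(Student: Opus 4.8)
\bigskip
\noindent\emph{Proof proposal.} The plan is to deduce the bound from a quantitative form of Hal\'asz's theorem, the decisive point being that a \emph{real}-valued multiplicative function of modulus at most $1$ cannot pretend to be one of the functions $n\mapsto n^{it}$ unless the quantity $D(x):=\sum_{p\le x}\frac{1-f(p)}{p}\ge 0$ is correspondingly bounded.

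For $|t|\le\log x$ write $\mb{D}(f,n^{it};x)^2:=\sum_{p\le x}\frac{1-f(p)\cos(t\log p)}{p}$ and set $\mc{M}:=\min_{|t|\le\log x}\mb{D}(f,n^{it};x)^2$. As $f$ is multiplicative with $|f(n)|\le 1$, the Hal\'asz--Montgomery--Tenenbaum mean value theorem gives
$$
\frac1x\left|\sum_{n\le x}f(n)\right|\ll (1+\mc{M})e^{-\mc{M}}+\frac1{\log x}.
$$
Since $1-f(p)\le 2$ we have $D(x)\le 2\log\log x+O(1)$, so $\tfrac1{\log x}\ll e^{-D(x)/2}$, while $(1+\mc{M})e^{-\mc{M}}\ll e^{-\mc{M}/2}$. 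Hence it suffices to prove $\mc{M}\gg D(x)-O(1)$, for then the Lemma follows with any sufficiently small absolute $\kappa>0$.

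To establish this, let $t^\ast$ attain the minimum defining $\mc{M}$. Because $f$ is real-valued one checks $\mb{D}(f,n^{-it^\ast};x)^2=\mb{D}(f,n^{it^\ast};x)^2=\mc{M}$, and using $\mb{D}(1,n^{2it^\ast};x)=\mb{D}(n^{it^\ast},n^{-it^\ast};x)$ together with the triangle inequality for $\mb{D}(\cdot,\cdot;x)$ we get $\mb{D}(1,n^{2it^\ast};x)\le 2\mc{M}^{1/2}$; also $\mb{D}(1,f;x)^2=D(x)$ (again by real-valuedness), so $\mb{D}(1,n^{it^\ast};x)\ge D(x)^{1/2}-\mc{M}^{1/2}$. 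Now invoke the standard estimate (a consequence of Mertens' theorem and the classical zero-free region for $\zeta$) that, uniformly for $|t|\le 2\log x$,
$$
\mb{D}(1,n^{it};x)^2=\log\bigl(2+|t|\log x\bigr)+O(1).
$$
Suppose $\mc{M}<\e D(x)$ for a small absolute constant $\e>0$. Then $\mb{D}(1,n^{it^\ast};x)^2\ge D(x)(1-2\sqrt{\e})$ forces $|t^\ast|\log x\gg e^{(1-2\sqrt{\e})D(x)-O(1)}$, whereas $\mb{D}(1,n^{2it^\ast};x)^2\le 4\mc{M}<4\e D(x)$ forces $|t^\ast|\log x\ll e^{4\e D(x)+O(1)}$. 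Comparing, $(1-2\sqrt{\e}-4\e)D(x)\ll 1$, so for $\e$ small (say $\e=1/100$) the inequality $\mc{M}<\e D(x)$ can hold only when $D(x)=O(1)$; in either case $\mc{M}\gg D(x)-O(1)$, as required.

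The main obstacle is this last step: its conclusion is simply false for complex $f$ (take $f(n)=n^{i\gamma}$, where $\mc{M}=0$ while $D(x)$ is large), so one must use that $f$ is real, and the only leverage this provides is the symmetry $t\leftrightarrow -t$; converting it into the lower bound $\mc{M}\gg D(x)$ is precisely what requires pairing it with the sharp size of $\mb{D}(1,n^{it};x)^2$ so as to rule out an admissible frequency $t^\ast$. (Alternatively, one can bypass Hal\'asz's theorem and argue elementarily, in the spirit of Hall and Tenenbaum, by iterating an integral inequality obtained from the identity $\bigl(\sum_{n\le x}f(n)\bigr)\log x=\int_1^x\bigl(\sum_{n\le t}f(n)\bigr)\frac{dt}{t}+\sum_{n\le x}f(n)\log n$ together with a decomposition of $\sum_{n\le x}f(n)\log n$ over prime powers.)
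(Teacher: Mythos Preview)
The paper does not prove this lemma at all; it simply cites it as a known result of Hall and Tenenbaum \cite{HT}. Your deduction from Hal\'asz's theorem is a valid and standard alternative route, genuinely different from the original Hall--Tenenbaum argument (which is elementary and close to what you sketch in your final parenthetical remark). The Hal\'asz approach has the advantage of fitting into the general ``pretentious'' framework and of making transparent why the real-valuedness of $f$ is essential; the elementary approach avoids appealing to the zero-free region for $\zeta$.

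One imprecision should be fixed. The formula $\mb{D}(1,n^{it};x)^2=\log(2+|t|\log x)+O(1)$ is \emph{not} valid uniformly for $|t|\le 2\log x$: when $|t|\asymp\log x$ the left side is $\log\log x+O(\log\log\log x)$ whereas the right side is $2\log\log x+O(1)$. The formula only holds for bounded $|t|$, say $|t|\le 10$. This does not break the argument, but you need an extra step: the inequality $\mb{D}(1,n^{2it^\ast};x)^2\le 4\e D(x)\le 8\e\log\log x+O(1)$, together with the lower bound $\mb{D}(1,n^{is};x)^2\ge\log\log x-O(\log\log\log x)$ valid for all $10\le|s|\le 2\log x$ (from the classical zero-free region), first forces $|2t^\ast|\le 10$. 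Only then are both $t^\ast$ and $2t^\ast$ in the range where your stated formula is correct, and the remainder of the contradiction goes through as written. You should insert this localisation step explicitly.
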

\begin{proof}[Proof of Proposition \ref{prop:LiouCorrel}]
If $g(p) = +1$ for all $p \leq N^c$ then the claim follows from Lemma \ref{lem:psiTriv}. Therefore, it remains to consider when $g(p) = -1$ for some $p \leq N^c$. \\ 
Let $\mc{Q}$ denote the set of (non-zero) quadratic residues modulo $N$. By Proposition \ref{prop:PbSmooth} (ii) we have
\begin{equation}\label{eq:trivQR}
\frac{1}{N} \sum_{b \in \mc{Q}} g(P_b) = \frac{|\mc{Q}|}{N} = \frac{1}{2} + O\left(\frac{1}{N}\right).
\end{equation}
On the other hand, we have
\begin{align} \label{eq:QRs}
\frac{1}{N} \sum_{b \in \mc{Q}} g(P_b) &= \frac{1}{N} \asum_{a \pmod{N}} \frac{1+\chi(a)}{2} g(P_a) \nonumber\\
&= \frac{1}{2N} \left(\asum_{a \pmod{N}} g(P_a) + \asum_{a \pmod{N}} g(P_a)\chi(a)\right) \\
&=: \frac{1}{2N}(S_1 + S_2).
\end{align}
We first estimate $S_1$. By Lemma \ref{lem:eta}, Remark \ref{rem:smallS0} and Cauchy-Schwarz we may observe that
\begin{align*}
\frac{1}{N}\sum_{a \pmod{N}} S_{f}(a) &= \frac{\tau}{\sqrt{N}} \asum_{a \pmod{N}} g(P_a) + O\left(\frac{|S_{f}(0)|}{N} + \left(\frac{1}{N} \asum_{a \pmod N} |S_{f}(a) -\tau g(P_a)\sqrt{N}|^2\right)^{1/2}\right) \\
&= \frac{\tau}{\sqrt{N}} \asum_{a \pmod{N}} g(P_a) + O\left(N^{1/2}M^{-1/2}\right),
\end{align*}
whereas also
$$
\sum_{a \pmod{N}} S_{f}(a) = \sum_{1 \leq n < N} f(n) \left(\sum_{a \pmod{N}} e(an/N)\right) = 0.
$$
As $\tau \in S^1$, we deduce that
\begin{equation}\label{eq:etaOnly}
S_1 = \asum_{a \pmod{N}} g(P_a) \ll NM^{-1/2}.
\end{equation}
Next, set $u_{N} := S_{\chi}(1)/\sqrt{N} \in \{1,i\}$. On combining \eqref{eq:GaussRigreg}, Plancherel's theorem, Lemma \ref{lem:eta} and the Cauchy-Schwarz inequality,
\begin{align}
\sum_{n < N} f(n) \chi(n) &= \frac{1}{N} \sum_{a \pmod{N}} S_{f}(a) \bar{S}_{\chi}(a) = \frac{\bar{u}_{N}}{\sqrt{N}} \asum_{a \pmod{N}} \chi(a) S_{f}(a) \nonumber\\
&= \tau \bar{u}_{N} \asum_{a \pmod{N}} \chi(a) g(P_a) + O(NM^{-1/2}). \label{eq:etachi}
\end{align}
Combining \eqref{eq:trivQR}, \eqref{eq:QRs}, \eqref{eq:etaOnly} and \eqref{eq:etachi}, we deduce that
\begin{align*}
\sum_{n < N} f(n)\chi(n) = \tau \bar{u}_{N} \asum_{a \pmod{N}}(1+\chi(a))g(P_a) + O(NM^{-1/2}) &= 2\tau \bar{u}_{N} \sum_{b \in \mc{Q}} g(P_b) + O(NM^{-1/2}) \\
&= \tau \bar{u}_{N} N + O(NM^{-1/2}).
\end{align*}
Now, since the LHS is real and $\tau \bar{u}_{N} \in S^1$, it follows that $\text{Im}(\tau \bar{u}_N) = O(M^{-1/2})$, and so
$$
\min_{\sg \in \{-1,+1\}} |\tau \bar{u}_{N}-\sg| \ll M^{-1/2}.
$$ 
But since $f \chi$ is a real-valued multiplicative function, by Lemma \ref{lem:spectrum} we find that if $M$ is sufficiently large then $|\tau \bar{u}_{N} +1| \geq 1/3$. Hence, we deduce that 
$$
\frac{1}{N}\sum_{n < N} f(n)\chi(n) = 1 + O(M^{-1/2}),
$$
as claimed.
\end{proof}
\subsection{Multiplicative functions that are close at a fixed scale}
The following lemma, which will be needed to prove the second claim of Theorem \ref{thm:dil}, may be of independent interest.
\begin{lem} \label{lem:verySparse}
Fix $\delta > 0$ and let $2+\delta \leq T \leq N$. Suppose $f,h: \mb{N} \ra \{-1,+1\}$ are multiplicative functions such that
$$
|\{n \leq N : f(n) \neq h(n)\}| \leq N/T.
$$
Then we have
$$
\sum_{\ss{p \leq N \\ f(p) \neq h(p)}} \frac{1}{p} \ll_{\delta} \frac{1}{\sqrt{\min\{T, \log N\}}}.
$$
\end{lem}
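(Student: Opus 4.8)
The plan is to deduce a pointwise-sparseness statement about the primes $p$ with $f(p) \neq h(p)$ from the global count $|\{n \le N : f(n) \neq h(n)\}| \le N/T$, and then sum $1/p$ over such primes. First I would observe that if $p$ is a prime with $f(p) \neq h(p)$ and $p \le N/2$, then for \emph{every} integer $m \le N/p$ that is coprime to $p$ and squarefree (or at least not divisible by $p^2$), we have $f(pm) = f(p)f(m)$ and $h(pm) = h(p)h(m)$, so whenever $f(m) = h(m)$ we get $f(pm) \neq h(pm)$. Thus each such $p$ contributes at least $|\{m \le N/p : (m,p)=1, f(m)=h(m)\}|$ distinct integers $n = pm \le N$ to the discrepancy set. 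The quantity $|\{m \le N/p : f(m) = h(m)\}|$ is $\ge N/p - |\{m \le N/p : f(m) \neq h(m)\}| \ge N/p - N/T$, which is $\gg N/p$ provided $p \le T/2$, and after removing multiples of $p$ it is still $\gg N/p$. Hence for each prime $p \le T/2$ with $f(p) \neq h(p)$, the discrepancy set contains $\gg N/p$ integers of the form $pm$. These sets are not disjoint across different $p$, but an integer $n \le N$ can be written as $pm$ with $p$ prime in at most $\Omega(n) \le \log N / \log 2$ ways; so summing over $p \le T/2$ with $f(p) \neq h(p)$,
\[
\frac{N}{T} \;\ge\; |\{n \le N : f(n) \neq h(n)\}| \;\gg\; \frac{1}{\log N} \sum_{\substack{p \le T/2 \\ f(p) \neq h(p)}} \frac{N}{p},
\]
which already gives $\sum_{p \le T/2,\, f(p)\neq h(p)} 1/p \ll (\log N)/T$. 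This is stronger than needed in the range $T \le (\log N)^2$ and in particular handles $T \le \log N$.

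For larger $T$ (i.e.\ $T > \log N$, where we only need the bound $1/\sqrt{\log N}$), and for the tail of primes $T/2 < p \le N$, a cruder argument suffices. Split the primes $p$ with $f(p) \neq h(p)$ into dyadic ranges $(Y, 2Y]$. In the range $p \le \sqrt{N}$, the multiples-of-$p$ argument above still shows each such $p$ produces $\gg N/p$ elements of the discrepancy set (now using that for $m \le N/p \le \sqrt N$, removing the few $m$ with $f(m) \neq h(m)$ — at most $N/(pT)$ of them — leaves $\gg N/p$ when $p \le N/(2T)$, which holds since $p \le \sqrt N$ and $T \le \sqrt N$ in the relevant regime), so by the same $\Omega(n)$-overcounting bound $\sum_{p \le \sqrt N,\, f(p) \neq h(p)} 1/p \ll (\log N)/T \ll 1/\log N$ once $T \ge (\log N)^2$. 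Finally, primes $p \in (\sqrt N, N]$ contribute $\sum_{\sqrt N < p \le N} 1/p \ll 1$ trivially by Mertens, but this is too weak; instead note such a $p$ divides at most $N/p < \sqrt N$ integers up to $N$, and one should again use the discrepancy count — but here it is cleanest to absorb these into the observation that there are at most $N/T$ bad integers total, each divisible by at most one prime exceeding $\sqrt N$, and $p \cdot 1 \le N$ forces... actually the simplest finish is: every prime $p$ with $f(p) \neq h(p)$ satisfies $p \in \{n \le N : f(n) \neq h(n)\}$ itself (taking $m = 1$), so $|\{p \le N : f(p) \neq h(p)\}| \le N/T$, and combined with $\sum 1/p$ restricted to $p > \sqrt N$ being $\le \frac{1}{\sqrt N}|\{p : f(p) \neq h(p)\}| \le \sqrt N / T \le 1/\sqrt{\log N}$ when $T \ge \sqrt{N \log N}$ — hmm, this needs $T$ large.

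The cleanest uniform approach, which I would actually adopt, is to run the multiples argument in one stroke: for each prime $p \le \min\{T, N\}/2$ with $f(p) \neq h(p)$ we get $\gg N/p$ discrepancy integers $pm$ with $m \le N/p$ and $p \nmid m$; overcounting each $n \le N$ at most $\log N/\log 2$ times yields $\sum_{p \le T/2,\, f(p) \neq h(p)} 1/p \ll (\log N)/T \le 1/\sqrt{T} \le 1/\sqrt{\min\{T,\log N\}}$ whenever $T \le (\log N)^2$, hence in particular whenever $T \le \log N$; and for $T > \log N$ we only claim the bound $1/\sqrt{\log N}$, for which I would instead take the threshold $\log N$ in place of $T$: each prime $p \le (\log N)/2$ with $f(p) \neq h(p)$ still gives $\gg N/p$ discrepancy integers since $N/(pT) = o(N/p)$, the overcounting bound gives $\sum_{p \le (\log N)/2,\, f(p)\neq h(p)} 1/p \ll (\log N)/T \le 1$, which is not yet $1/\sqrt{\log N}$ — so for this regime I would instead bound the number of \emph{prime} discrepancies directly: $|\{p \le N : f(p) \neq h(p)\}| \le N/T$, and the primes contributing to $\sum 1/p$ that are $> \log N$ give $\sum_{p > \log N} 1/p \le$ (dyadic sum) where in each dyadic block $(Y,2Y]$ with $Y > \log N$ the number of bad primes is at most $\min(\pi(2Y), N/T)$, yielding a geometric-type sum bounded by $\ll 1/\sqrt{\log N}$ after optimizing. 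I expect the main obstacle to be the bookkeeping in this last range and getting the exponent $1/2$ exactly right; the honest resolution is to choose the cutoff at $\sqrt{\min\{T,\log N\}}$, use the multiples argument for primes below the cutoff (giving $\ll \sqrt{\min\{T,\log N\}}/T \cdot \log N$, wait) — in short, the delicate point is balancing the $\log N$ loss from the $\Omega(n)$ overcounting against the savings $N/T$, and I would handle it by applying the multiples argument only to primes $p \le P_0 := \min\{T, \log N\}^{1/2}$ (where $\Omega(pm) \le \Omega(m) + 1$ and the overcount is controlled), showing these contribute $\ll P_0 \log N / T$, and observing separately that $\sum_{P_0 < p \le N,\, f(p) \neq h(p)} 1/p \le \frac{1}{P_0}|\{p \le N : f(p) \neq h(p)\}| \le \frac{N}{P_0 T} \cdot$ — this still needs care. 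The upshot: the heart of the proof is the elementary multiplication trick turning one bad prime into $\gg N/p$ bad integers; the rest is an optimization over the cutoff, with the $\delta$ in the hypothesis $T \ge 2+\delta$ only needed to keep $N/p - N/T \gg N/p$ meaningful for the smallest primes.
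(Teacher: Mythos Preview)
Your core device --- for each bad prime $p$ with $f(p)\neq h(p)$, the map $m\mapsto pm$ (with $(m,p)=1$ and $f(m)=h(m)$) produces many bad integers --- is exactly the lower-bound mechanism the paper uses. The gap is entirely on the upper-bound side, and it is a real one.

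First, a concrete error: you assert that $(\log N)/T$ ``is stronger than needed in the range $T\le(\log N)^2$ and in particular handles $T\le\log N$''. The inequality goes the other way: $(\log N)/T\le 1/\sqrt{T}$ is equivalent to $T\ge(\log N)^2$, so your bound is \emph{weaker} than the target precisely when $T\le(\log N)^2$. For example at $T\asymp\sqrt{\log N}$ your bound is $\asymp\sqrt{\log N}$, while the lemma demands $\asymp(\log N)^{-1/4}$. The later attempts to patch this (cutoffs at $\sqrt{N}$, at $\log N$, at $\sqrt{\min\{T,\log N\}}$, counting bad primes themselves) all founder on the same arithmetic, as you yourself notice.

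The underlying reason the crude overcount $\Omega(n)\le(\log N)/\log 2$ cannot be rescued by a cutoff is that you are missing two genuine inputs. The paper first invokes the Hall--Tenenbaum inequality (Lemma~\ref{lem:HT}) to deduce from $\tfrac1N\sum_{n\le N}f(n)h(n)\ge 1-2/T\gg_\delta 1$ that $\sum_{p\in\mathcal B}1/p\le B=B(\delta)$ is \emph{a priori} bounded; this is where the hypothesis $T\ge 2+\delta$ is actually used, and it is not accessible by your elementary counting. With this in hand, one knows that $\sum_{n\le N}\Omega_{\mathcal B}(n)^2\ll_\delta N$ (not $N\log\log N$ or $N\log N$), and a threshold argument --- split at $\Omega_{\mathcal B}(n)\le K$, bound the small part by $K\cdot N/T$ and the large part by $K^{-1}\sum\Omega_{\mathcal B}(n)^2\ll N/K$, optimise $K=\sqrt{T}$ --- yields exactly the $1/\sqrt{T}$ saving. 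The boundedness of $\sum_{p\in\mathcal B}1/p$ is also what allows the paper (via the Matom\"aki--Shao sieve lower bound) to show $|\{m\le X:\ q\mid m\Rightarrow q\notin\mathcal B\}|\gg_\delta X$ uniformly for $N^\varepsilon\le X\le N$, replacing your condition $p\le T/2$ (which is vacuous for small $T$) by $p\le N^{1-\varepsilon}$ and leaving only an $O(\varepsilon)$ tail. Without the Hall--Tenenbaum step your approach cannot reach the stated bound in the regime $T\ll(\log N)^2$.
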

\begin{rem}
Let us make three remarks here. \\
Firstly, we stress that a condition like $T \geq 2+\delta$ is necessary in this statement: if for example we allow $T < 2$ then we may take e.g. $f(n) = \lambda(n)$ and $h \equiv 1$, for which $f(p) \neq h(p)$ for all $p < N$ but with $f(n) = h(n)$ for $\sim (1/2+o(1))N \leq N/T$ solutions for $N$ sufficiently large. \\
Secondly, Lemma \ref{lem:verySparse} is most interesting when $T = T(N) \ra \infty$ with $N$, in which case the set of primes $p$ where $f(p) \neq h(p)$ is rather sparse. If, conversely, $T$ is bounded with $N$ then we may recover the conclusion trivially from e.g. Lemma \ref{lem:HT} (see \eqref{eq:closeTo1} and \eqref{eq:closeonPrimes} below). \\
Thirdly, we note that if $f$ and $h$ are fixed functions and $N$ is growing then some form of this lemma follows quickly from Delange's theorem (see e.g. \cite[Sec. III.4.2]{TenBook}). 
However, in at least one case to which this result is applied, $h = \chi$ is a character of conductor $N$, so a scale-dependent result of the kind proved here is essential for our purposes. 
\end{rem}
\begin{proof}
Observe first that $0 \leq 1-f(n)h(n) \leq 2$, a fact we will use frequently below. Now, the hypothesis we have implies that 
\begin{align}\label{eq:closeTo1}
0 \leq \frac{1}{N} \sum_{n \leq N} (1-f(n)h(n)) \leq \frac{2}{T}, \quad \text{ so that } \quad \frac{1}{N} \sum_{n \leq N} f(n)h(n) \geq 1 - \frac{2}{T} \gg_{\delta} 1,
\end{align}
and in particular the sum is positive. Since $f(n)h(n)$ is multiplicative, by Lemma \ref{lem:HT} there is a constant $B = B(\delta) > 0$ such that
\begin{equation}\label{eq:closeonPrimes}
\sum_{p \leq N} \frac{1-f(p)h(p)}{p}  \leq B.
\end{equation}
Now let $\tfrac{1}{\sqrt{\log N}} \leq \eps < 1/2$ be a parameter to be chosen later. Define also $\mc{B} := \{p \leq N: f(p) \neq h(p)\}$ and $\mc{P} := \{p \leq N : p \notin \mc{B}\}$, as well as
$$
\Omega_{\mc{B}}(n) := \sum_{\ss{p^k || n \\ p \in \mc{B}}} k.
$$
On the one hand, by positivity of $1-f(n)h(n)$ we get
\begin{align*}
\sum_{n \leq N} (1-f(n)h(n))\Omega_{\mc{B}}(n) \geq \sum_{\ss{pm \leq N \\ p \in \mc{B} \cap [1,N^{1-\eps}] \\ q|m \Rightarrow q \in \mc{P}}} \left(1 - f(p)h(p) f(m)h(m)\right).
\end{align*}
Note that if $q|m \Rightarrow q \in \mc{P}$ then $f(m) = h(m)$. Since $1-f(p)h(p) \neq 0$ if and only if $p \in \mc{B}$, 
\begin{align} \label{eq:lowBd}
\sum_{n \leq N} (1-f(n)h(n))\Omega_{\mc{B}}(n) \geq \sum_{\ss{p \leq N^{1-\eps}}} (1-f(p)h(p)) |\{m \leq N/p : \, q|m \Rightarrow q \in \mc{P}\}|.
\end{align}
Next, define $v := e^{B/2 + 10}$. 
Combining \eqref{eq:closeonPrimes} with Mertens' theorem, we get that for every $N^{\eps} \leq X \leq N$,
$$
\sum_{\ss{p \in \mc{P} \\ X^{1/v} < p \leq X}} \frac{1}{p} \geq \sum_{X^{1/v} < p \leq X} \frac{1}{p} - \frac{1}{2} \sum_{p \leq N} \frac{1-f(p)h(p)}{p} \geq \log v - \frac{B}{2} + O\left(\frac{v}{\sqrt{\log N}}\right) \geq 2,
$$
say, provided $N$ is sufficiently large. By a theorem of Matom\"{a}ki and Shao \cite{WTSW2} (building on the earlier work \cite{WTSW}), we see that uniformly over $N^{\eps} \leq X \leq N$,
$$
|\{m \leq X : \, q | m \Rightarrow q \in \mc{P}\}| \gg v^{-v(1+o(1))}  \prod_{q \in \mc{B}} \left(1-\frac{1}{q}\right) X \gg v^{-v(1+o(1))} \exp\left(-\frac{1}{2}\sum_{q \leq N} \frac{1-f(q)h(q)}{q}\right)\geq c(B) X,
$$
for some constant $c(B) > 0$. Inserting this bound into \eqref{eq:lowBd}, we get
\begin{align}\label{eq:lowerOm}
\sum_{n \leq N} (1-f(n)h(n))\Omega_{\mc{B}}(n) 
\geq c(B) N \sum_{p \leq N^{1-\eps}} \frac{1-f(p)h(p)}{p}.
\end{align}
We next get an upper bound for the LHS in \eqref{eq:lowBd}. Let $K \geq 1$ be a parameter to be chosen later. 
Splitting $n \leq N$ according to whether or not $\Omega_{\mc{B}}(n) \leq K$, and using $0 \leq 1-f(n)h(n) \leq 2$ together with \eqref{eq:closeTo1}, we find 
\begin{align*}
\sum_{n \leq N} (1-f(n)h(n))\Omega_{\mc{B}}(n) &\leq K \sum_{n \leq N} (1-f(n)h(n)) + 2\sum_{\ss{n \leq N \\ \Omega_{\mc{B}} > K}} \Omega_{\mc{B}}(n) \leq \frac{2KN}{T} + \frac{2}{K}\sum_{\ss{n \leq N}} \Omega_{\mc{B}}(n)^2.
\end{align*}
To bound the sum on the RHS just above, we use the elementary estimate
\begin{align*}
\sum_{n \leq N} \Omega_{\mc{B}}(n)^2 = \sum_{\ss{p,q \leq N \\ p,q \in \mc{B}}} \sum_{\ss{n \leq N \\ [p,q] | n}} 1 = \sum_{\ss{[p,q] \leq N \\ p,q \in \mc{B}}} \left(\frac{N}{[p,q]} + O(1)\right) &\leq N\left(1+\sum_{p \in \mc{B}} \frac{1}{p}\right)^2 + O(|\{pq \leq N: \, p,q \in \mb{P}\}|) \\
&\leq (B+1)^2 N + O\left(\frac{N\log\log N}{\log N}\right),
\end{align*}
invoking \eqref{eq:closeonPrimes} in the last estimate. It follows that when $N$ is sufficiently large,
$$
\sum_{n \leq N} (1-f(n)h(n))\Omega_{\mc{B}}(n) \leq N\left(K/T + 3(B+1)^2/K\right).
$$
Combining this with \eqref{eq:lowerOm}, we get
$$
\sum_{p \leq N^{1-\eps}} \frac{1-f(p)h(p)}{p} \leq \frac{1}{c(B)} \left(\frac{K}{T} + \frac{3(B+1)^2}{K}\right).
$$
By Mertens' theorem, we may recombine the primes $N^{1-\eps} < p \leq N$, getting
$$
\sum_{p \leq N} \frac{1-f(p)h(p)}{p} \leq \frac{1}{c(B)} \left(\frac{K}{T} + \frac{3(B+1)^2}{K}\right) + 2\eps+ O\left(\frac{1}{\log N}\right).
$$
We choose $K = T^{1/2}$ and $\eps = (\log N)^{-1/2}$ to get
$$
\sum_{p \leq N} \frac{1-f(p)h(p)}{p} \ll_B \frac{1}{\sqrt{\min\{T, \log N\}}},
$$
and since $B$ depended only on $\delta$, the claim follows.
\end{proof}
\begin{proof}[Proof of Theorem \ref{thm:dil}]
Assume that $f$ satisfies \eqref{eq:unifSf}, with $1 \leq M \leq N$. We begin by proving the first claim. Since this claim is trivial otherwise, we may assume that $M$ is larger than any fixed, absolute constant. 
Combining the first statement of Proposition \ref{prop:PbSmooth} with Proposition \ref{prop:LiouCorrel}, we obtain that there is a real character $\psi$ modulo $N$ such that
$$
\frac{1}{N}\sum_{n < N} f(n)\psi(n) = 1+O(M^{-1/2}),
$$
and the first claim follows from the identity $1_{f(n)\neq \psi(n)} = (1-f(n)\psi(n))/2$, for $n < N$. The second claim follows from the first by Lemma \ref{lem:verySparse}, taking $T \asymp \sqrt{M}$. \\
Next, suppose $p \leq N^c$ is a prime for which $f(p) = \psi(p)$. We wish to show that $g(p) = f(p)$ in this case. For every $a \pmod{N}$ we of course have
$$
S_{\psi}(ap) = \psi(p) S_{\psi}(a) = f(p)S_{\psi}(a).
$$
Moreover, by Plancherel's theorem modulo $N$, we have
$$
1 + O(M^{-1/2}) = \frac{1}{N} \sum_{n < N} f(n) \psi(n) = \frac{1}{N^2} \sum_{a \pmod{N}} S_f(a)\bar{S_{\psi}}(a).
$$
We therefore deduce that
$$
\frac{1}{N^2} \sum_{a \pmod{N}} |S_f(a)-S_{\psi}(a)|^2 = \frac{1}{N^2} \sum_{a \pmod{N}} \left(|S_f(a)|^2 + |S_{\psi}(a)|^2 - 2\text{Re}(S_f(a) \bar{S_{\psi}}(a))\right) = O(M^{-1/2}).
$$
Making the invertible change of variables $a \mapsto ap \pmod{N}$, we thus also have
$$
\frac{1}{N^2} \sum_{a \pmod{N}} |S_f(ap)-S_{\psi}(ap)|^2 \ll M^{-1/2},
$$
so that by the Cauchy-Schwarz inequality we get
\begin{align}
&\frac{1}{N} \sum_{a \pmod{N}} |S_f(ap) - f(p)S_f(a)|^2 \nonumber\\
&\ll \frac{1}{N} \sum_{a \pmod{N}}\left(|S_f(ap) - S_{\psi}(ap)|^2 + |S_{\psi}(ap) - f(p)S_{\psi}(a)|^2 + |S_{\psi}(a)-S_f(a)|^2\right) \ll N/\sqrt{M}. \label{eq:Sfapconv}
\end{align}
A final application of Plancherel's theorem, together with \eqref{eq:unifSf} and the previous estimate, then gives
\begin{align*}
|f(p)-g(p)|^2N &= \frac{1}{N}\sum_{a \pmod{N}} |f(p)S_f(a) - g(p)S_f(a)|^2 \\
&\ll \frac{1}{N} \sum_{a \pmod{N}} \left(|S_f(ap)-f(p)S_f(a)|^2 + |S_f(ap)-g(p)S_f(a)|^2\right) \\
&\ll N/\sqrt{M}.
\end{align*}
If $M$ is large enough then as $f(p),g(p) \in \{-1,+1\}$ we find $f(p) = g(p)$, as claimed.
\end{proof}
\subsection{Proofs of Theorem \ref{thm:iff} and Corollary \ref{cor:Liou}}
\begin{proof}[Proof of Theorem \ref{thm:iff}]
The proof of part (b) is identical to the proof of Theorem \ref{thm:dil}, save that we combine Proposition \ref{prop:LiouCorrel} with the second statement, rather than the first, of Proposition \ref{prop:PbSmooth}. \\
The proof of part (a) is similar to proof of the third claim of Theorem \ref{thm:dil}. Assume that $f: \mb{N} \ra \{-1,+1\}$ is a multiplicative function for which there is a large prime $N$ such that \eqref{eq:almostall} holds for some real character $\psi$ modulo $N$. Set $\mc{B}_f := \{p < N : f(p)\neq \psi(p)\}$.  By Lemma \ref{lem:verySparse}, we have that
$$
\sum_{p \in \mc{B}_f} \frac{1}{p} \ll \frac{1}{\sqrt{\min\{M,\log N\}}},
$$
and (ii) is proved. The proof of (i) is the same as the proof of \eqref{eq:Sfapconv}, but with $M$ in place of $M^{1/2}$.
\end{proof}
We now give the proof of Corollary \ref{cor:Liou}.
\begin{proof}[Proof of Corollary \ref{cor:Liou}]
If $N$ is an exceptional modulus (in the sense e.g. of \cite[Thm. 5.26]{IK}) then the claim that a \emph{real} zero $\beta \in (0,1)$ exists such that $(1-\beta)\log N \leq c_0$ for some fixed $c_0$, is immediate. Thus, we may assume in the remainder of the argument that $N$ is non-exceptional.\\
Assume that $\lambda$ satisfies \eqref{eq:unifSf}. As before, write $\chi = (\tfrac{\cdot}{N})$. By Theorem \ref{thm:dil} and the prime number theorem (to rule out $\psi$ being principal) we get that
$$
\frac{1}{N}\sum_{n < N} \lambda(n)\chi(n) = 1+o(1).
$$
Applying Lemma \ref{lem:HT} with $f = \lambda$, we get
$$
\sum_{p < N} \frac{\chi(p)}{p} = -\log\log N + O(1).
$$ 
As $N$ is non-exceptional, by e.g., \cite[(4.3)]{GraMan} we obtain
$$
L(1,\chi) \asymp \prod_{p < N} \left(1-\frac{\chi(p)}{p}\right)^{-1} \asymp \exp\left(\sum_{p < N} \frac{\chi(p)}{p}\right) \asymp \frac{1}{\log N}.
$$
Finally, by a classical result of Hecke (as discussed e.g. in \cite[Rem. 1.7]{MaMe}), we deduce that there is a real zero $\beta$ of $L(s,\chi)$ such that $(1-\beta) \ll (\log N)^{-1}$ as well, which finishes the proof.
\end{proof}

\section*{Appendix: Integers free of large and small prime factors in arithmetic progressions}
In this appendix we will prove Proposition \ref{prop:soundGen}. To proceed further we will recall and introduce further notation. Given $1 < z \leq y \leq x$, define 
$$
u := \frac{\log x}{\log y}, \quad v := \frac{\log x}{\log z}.
$$
For $\text{Re}(s) >0$, $q \geq 1$ and a character $\chi \pmod{q}$ we set
$$
L(s,\chi;y,z) := \prod_{z < p \leq y} \left(1-\frac{\chi(p)}{p^{s}}\right)^{-1}, \quad \zeta(s,y,z) := \prod_{z < p \leq y} \left(1-\frac{1}{p^{s}}\right)^{-1}.
$$
Important to the study of 
$$
\Theta(x,y,z) := |\{n \leq x : p|n \Rightarrow z < p \leq y\}|
$$ 
is the parameter $\alpha = \alpha(x,y,z)$, mentioned in relation to Proposition \ref{prop:soundGen}. We recall that $\alpha$ is the unique real solution to the equation
$$
\sum_{z < p \leq y} \frac{\log p}{p^{\alpha}-1} - \log x = 0.
$$
We will assume throughout that 
\begin{equation}\label{eq:assumps}
\min\{\sqrt{x}, z^{\log\log x}\} \geq y \geq z^2, \quad z \geq \exp(\sqrt{\log x}), \quad y \leq q \leq y^C,
\end{equation}
that $u$ is sufficiently large in terms of $C$, and that $v/u \geq (\log u)^2$. By \cite[Thm. 2]{Saias3}, under these conditions we have
$$
\alpha = 1-\frac{\log(u\log u) +O(1)}{\log y}.
$$
We will need the following properties of $\Theta(x,y,z)$, and the related $\Psi(x,y) = \Theta(x,y,1)$.
\begin{lem}\label{lem:SaiasAsymp}
Let $2 \leq z \leq y \leq x$ with $y \geq z^2$ and $z \geq \exp(\sqrt{\log x})$, and let $v$ and $u$ be defined as above. Then provided that $u \ra \infty$ and $v/u \geq (\log (2u))^2$,
$$
\Theta(x,y,z) = (1+o(1)) \Psi(x,y) \prod_{p \leq z} \left(1-\frac{1}{p}\right).
$$
More generally, provided $u \ra \infty$, $y \ra \infty$ and $y \geq 2z$,
\begin{equation}\label{eq:ThetaBd}
\Theta(x,y,z) \asymp \frac{x^{\alpha} \zeta(\alpha,y,z)}{\sqrt{\log x \log y}}
\end{equation}
\end{lem}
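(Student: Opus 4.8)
The statement to prove is Lemma~\ref{lem:SaiasAsymp}, which records two facts about $\Theta(x,y,z)$: an asymptotic relating it to $\Psi(x,y)$ via the singular product $\prod_{p\le z}(1-1/p)$, and an order-of-magnitude estimate in terms of $x^\alpha\zeta(\alpha,y,z)/\sqrt{\log x\log y}$.

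\medskip

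\textbf{Plan.} The whole lemma is, in essence, a repackaging of Saias's work on $\Theta(x,y,z)$, so the proof I would give is mostly a matter of citing the right results from \cite{Saias1,Saias2,Saias3} (and, for the friable comparison, the classical Hildebrand--Tenenbaum / de~Bruijn machinery for $\Psi(x,y)$) and checking that our hypotheses \eqref{eq:assumps} place us inside their ranges of validity. Concretely:

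\begin{enumerate}[(1)]
\item \emph{The order-of-magnitude bound \eqref{eq:ThetaBd}.} By \cite[Thm.~1]{Saias3} (or the analogous statement in \cite{Saias1}), one has $\Theta(x,y,z)\asymp x^\alpha \zeta(\alpha,y,z)/\sqrt{\log x\,\log y}$ uniformly in the range $y\ge 2z$, $y\to\infty$, $u\to\infty$. I would simply quote this, after noting that $\zeta(s,y,z)$ as defined in the appendix agrees with Saias's normalisation and that $\alpha=\alpha(x,y,z)$ is the saddle point he uses. No real computation is needed here beyond confirming the parameter ranges.
\item \emph{The comparison with $\Psi(x,y)$.} Here the point is that when $z$ is not too large relative to $x$ (we have $z\ge\exp(\sqrt{\log x})$ and $v/u\ge(\log 2u)^2$, so $\log z$ is genuinely smaller than $\log y$), removing the small prime factors $p\le z$ costs exactly the Mertens factor $\prod_{p\le z}(1-1/p)$ to first order. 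I would derive this by writing $\Psi(x,y)=\sum_{d}\mathbf 1_{P^-(d)>z\ \mathrm{or}\ d=1}\cdots$ — more precisely, sieving out the $z$-smooth part: every $y$-friable $n\le x$ factors uniquely as $n=ab$ with $a$ being $z$-friable and $b$ free of primes $\le z$, giving
\[
\Psi(x,y)=\sum_{\substack{a\le x\\ P^+(a)\le z}}\Theta(x/a,y,z).
\]
Using \eqref{eq:ThetaBd} to control $\Theta(x/a,y,z)\asymp a^{-\alpha}\,\Theta(x,y,z)$ for $a$ in the relevant (short) range, and the fact that $\sum_{P^+(a)\le z}a^{-\alpha}=\zeta(\alpha,z,1)=\prod_{p\le z}(1-p^{-\alpha})^{-1}$, one gets $\Psi(x,y)\sim \Theta(x,y,z)\,\zeta(\alpha,z,1)$. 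Finally $\alpha$ is close enough to $1$ — by \eqref{eq:alphaEst}, $1-\alpha\asymp \log(u\log u)/\log y$, which is $o(1/\log z)$ under $v/u\ge(\log 2u)^2$ — that $\zeta(\alpha,z,1)=(1+o(1))\prod_{p\le z}(1-1/p)^{-1}$, whence the claimed relation $\Theta(x,y,z)=(1+o(1))\Psi(x,y)\prod_{p\le z}(1-1/p)$. Alternatively, this asymptotic is stated directly as \cite[Thm.~2]{Saias3}, which I would cite as the clean reference.
\end{enumerate}

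\medskip

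\textbf{Main obstacle.} The only genuinely delicate point is step~(2): justifying that one may replace $\Theta(x/a,y,z)$ by $a^{-\alpha}\Theta(x,y,z)$ inside the sum over $z$-friable $a$, uniformly enough that the error is truly $o(1)$. This requires the Lipschitz-type regularity of $\Theta$ in the $x$-aspect (of the form $\Theta(x/a,y,z)=(1+O(\tfrac{\log a}{\log x}+\tfrac1{u}))\,a^{-\alpha}\Theta(x,y,z)$), together with the fact that the tail of $\sum_a a^{-\alpha}$ over $a>x^{\varepsilon}$ is negligible because $\alpha$ is bounded away from $0$ — both of which are available from Saias's papers but need the hypotheses $z\ge\exp(\sqrt{\log x})$ and $v/u\ge(\log 2u)^2$ to be in force. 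The condition $v/u\ge(\log 2u)^2$ is precisely what guarantees $(1-\alpha)\log z=o(1)$, which is the hinge of the argument. Everything else — Mertens, the definition of $\alpha$, the range check against \eqref{eq:assumps} — is routine. Since the lemma is explicitly attributed to \cite{Saias3}, the cleanest writeup simply invokes \cite[Thms.~1 and 2]{Saias3} after verifying the parameter constraints, and I would present it that way.
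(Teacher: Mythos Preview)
Your proposal is correct and takes essentially the same approach as the paper: both treat the lemma as a direct citation of Saias's results, with the paper invoking \cite[Thm.~B]{Saias1} (corrected per \cite[p.~357]{Saias3}) for the asymptotic and \cite[Thm.~1--2]{Saias3} for \eqref{eq:ThetaBd}, while you cite \cite[Thms.~1 and 2]{Saias3}. Your additional sieving sketch in step~(2) is more than the paper provides, but since you yourself conclude that the clean writeup is simply to quote Saias, the two presentations coincide.
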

\begin{proof}
The first statement follows immediately from \cite[Thm. B]{Saias1} (note that there is a misprint in the range of the Euler product, which is corrected in \cite[p. 357]{Saias3}). The second follows from \cite[Thm. 1-2]{Saias3}.
\end{proof}
\begin{lem} \label{lem:dlBTen}
Let $(\log x)^2 \leq y \leq x$. Then, uniformly over real numbers $1 \leq d \leq y^2$,
$$
\Psi(x/d,y) = (1+O(u^{-1}))d^{-\alpha} \Psi(x,y).
$$
Moreover, uniformly in the range $1 \leq d \leq x$ we have $\Psi(x/d,y) \ll d^{-\alpha}\Psi(x,y)$.
\end{lem}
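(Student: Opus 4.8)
The plan is to deduce both estimates from the Hildebrand--Tenenbaum saddle-point asymptotic for $\Psi$, which in the range $y \geq (\log x)^{1+\e}$ in force here reads
$$
\Psi(t,y) = \frac{t^{\alpha_t}\zeta(\alpha_t,y)}{\alpha_t\sqrt{2\pi\,\phi_2(\alpha_t,y)}}\left(1 + O\!\left(\frac{1}{u_t} + \frac{\log y}{y}\right)\right),
$$
where $\zeta(\sigma,y) := \prod_{p \leq y}(1-p^{-\sigma})^{-1}$, $\alpha_t := \alpha(t,y)$, $u_t := \log t/\log y$, and $\phi_2(\sigma,y) := \sum_{p \leq y}\frac{(\log p)^2 p^{\sigma}}{(p^{\sigma}-1)^2} = \frac{d^2}{d\sigma^2}\log\zeta(\sigma,y)$. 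This is a sharpening of the two-sided bound of Lemma~\ref{lem:SaiasAsymp}, available from the same body of work of Hildebrand--Tenenbaum (and Saias); indeed the whole lemma is classical, being essentially a lemma of de la Bret\`eche and Tenenbaum, so one clean option is simply to cite it. I would first record two preliminary facts: \textbf{(i)} $\phi_2(\alpha,y) \asymp \log x\log y$ uniformly in our range, which follows from \eqref{eq:alphaEst} together with Mertens-type estimates; and \textbf{(ii)} the saddle comparison $0 \leq \alpha(x/d,y) - \alpha(x,y) \ll \frac{\log d}{\log x\log y}$, obtained by applying the mean value theorem to the defining equation $\sum_{p\leq y}\frac{\log p}{p^{\sigma}-1} = \log(\cdot)$, whose $\sigma$-derivative is $-\phi_2(\sigma,y)$.

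For the first (precise) statement, set $\alpha := \alpha(x,y)$ and $\alpha_d := \alpha(x/d,y)$. When $1 \leq d \leq y^2$, \textbf{(ii)} gives $\alpha_d - \alpha \ll (\log x)^{-1}$, hence $u_{x/d} = u + O(1)$ and $\log(x/d) = (1+O(1/u))\log x$. The function $F(\sigma) := \sigma\log(x/d) + \log\zeta(\sigma,y)$ satisfies $F'(\sigma) = \log(x/d) - \sum_{p\leq y}\frac{\log p}{p^\sigma-1}$, so it is convex with minimum exactly at $\sigma = \alpha_d$. Expanding to second order at $\alpha_d$ --- where the linear term vanishes, which is the crux --- gives $0 \leq F(\alpha) - F(\alpha_d) = \tfrac12(\alpha - \alpha_d)^2\phi_2(\tilde\sigma,y) \ll \frac{(\log d)^2}{\log x\log y} \leq \frac{4}{u}$, using \textbf{(i)} and \textbf{(ii)}. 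Exponentiating, $(x/d)^{\alpha_d}\zeta(\alpha_d,y) = d^{-\alpha}x^{\alpha}\zeta(\alpha,y)\,e^{-(F(\alpha)-F(\alpha_d))} = d^{-\alpha}x^{\alpha}\zeta(\alpha,y)(1+O(1/u))$. Feeding this, together with $\phi_2(\alpha_d,y)/\phi_2(\alpha,y) = 1+O(1/u)$ and $\alpha_d/\alpha = 1+O(1/\log x)$, into the saddle-point asymptotic applied at both $t=x$ and $t=x/d$ yields $\Psi(x/d,y) = (1+O(1/u))\,d^{-\alpha}\Psi(x,y)$; the $O(\log y/y)$ error terms are $O(1/u)$ since $y \geq (\log x)^2$.

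For the second (crude) bound one only needs a uniform upper bound for $1 \leq d \leq x$. The range $d \leq y^2$ is the first statement. For $x/y \leq d \leq x$ the trivial estimate $\Psi(x/d,y) \leq x/d \leq d^{-\alpha}x$ already suffices once one checks the elementary inequality $x \ll d^{-\alpha+1}\Psi(x,y)$, which follows from $(x/d)^{1-\alpha} \leq y^{1-\alpha} \asymp u\log u$, the lower bound $\Psi(x,y) \gg x^{\alpha}\zeta(\alpha,y)(\log x\log y)^{-1/2}$ of Lemma~\ref{lem:SaiasAsymp}, and a routine Mertens computation showing $\zeta(\alpha,y) \gg \log y$ is large enough to absorb the polynomial factors. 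The remaining intermediate range $y^2 < d < x/y$ is most cleanly handled by quoting the corresponding estimate of Hildebrand--Tenenbaum / de la Bret\`eche--Tenenbaum; alternatively one runs the convexity argument of the previous paragraph, keeping only $(x/d)^{\alpha_d}\zeta(\alpha_d,y) \leq (x/d)^{\alpha}\zeta(\alpha,y)$ in place of the Taylor expansion. The main obstacle throughout is genuinely technical uniformity: making the $\phi_2$ (and third-derivative) estimates and the Taylor remainders explicit and uniform across the whole range $(\log x)^2 \leq y \leq x$, and pinning down the crude upper bound for $d$ close to $x$ --- all standard in the friable-numbers literature, which is why the cleanest route is to cite rather than reprove.
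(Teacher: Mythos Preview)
Your proposal is correct, and you explicitly flag that the cleanest route is to cite de~la~Bret\`eche--Tenenbaum; that is exactly what the paper does. Its entire proof is a two-line citation: the first claim is read off from \cite[Thm.~2.4(ii)]{dlBTen} (with $m=1$, $t\leq 2$), and the second from \cite[Thm.~2.4(i)]{dlBTen}.

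What you have done instead is sketch the \emph{mechanism} behind that cited result: the saddle-point asymptotic for $\Psi$, the mean-value comparison of saddles $\alpha_d-\alpha \ll (\log d)/(\log x\log y)$, and the convexity/Taylor argument on $F(\sigma)=\sigma\log(x/d)+\log\zeta(\sigma,y)$ to show $(x/d)^{\alpha_d}\zeta(\alpha_d,y) = (1+O(1/u))d^{-\alpha}x^{\alpha}\zeta(\alpha,y)$. This is essentially the proof strategy underlying the de~la~Bret\`eche--Tenenbaum theorem itself, so you are not taking a genuinely different route so much as unpacking the black box. The trade-off is clear: the paper's citation is efficient and avoids the uniformity bookkeeping you yourself identify as the main obstacle (controlling $\phi_2$ and the Taylor remainders across the full range $(\log x)^2 \leq y \leq x$, and the slightly fiddly case analysis for the crude bound when $d$ is large); your sketch, on the other hand, makes the result self-contained and explains \emph{why} the exponent $d^{-\alpha}$ appears. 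Either is acceptable here, but for a lemma that is purely a tool in an appendix, the paper's choice to cite is the natural one.
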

\begin{proof}
The first claim is immediate from \cite[Thm. 2.4(ii)]{dlBTen}, taking $m = 1$ and noting that $\bar{u} = u \leq u_y$ and $t \leq 2$ in the notation there; the second claim is immediate from \cite[Thm. 2.4(i)]{dlBTen}.
\end{proof}
To obtain Proposition \ref{prop:soundGen} we will adapt a method that was invented by Soundararajan \cite{Sound}, and refined by Harper \cite{Harper}, to study $y$-friable integers in arithmetic progressions modulo $q$, when $y \leq q \leq y^C$ for some $C > 0$. In the sequel, we write
$$
\mc{T}_{y,z} := \{n\in \mb{N} : p|n \Rightarrow z < p \leq y\}.
$$
Let $0 \leq \Phi \leq 1_{[0,1]}$ be a fixed smooth function. We introduce the notation
$$
\Theta(x,y,z;q,a;\Phi) := \sum_{\ss{n \in \mc{T}_{y,z} \\ n \equiv a \pmod{q}}} \Phi(n/x).
$$
By orthogonality of Dirichlet characters, we have
\begin{equation}\label{eq:orthoChar}
\Theta(x,y,z;q,a;\Phi) = \frac{1}{\phi(q)} \sum_{\ss{n \in \mc{T}_{y,z}}} \Phi(n/x) 1_{(n,q) = 1} + \frac{1}{\phi(q)} \sum_{\chi \neq \chi_0} \bar{\chi}(a) \sum_{n \in \mc{T}_{y,z}} \chi(n) \Phi(n/x).
\end{equation}
We arrange the non-principal characters into sets as follows. For each $0 \leq k \leq \log q/2$ we let
$$
\mc{B}(k) := \{s = \sg + it : \, \sg \geq 1-k/\log q, \, |t| \leq q\},
$$
and define
$$
\Xi(k) := \{\chi \neq \chi_0 : \, L(s,\chi) \neq 0 \text{ for all } s \in \mc{B}(k), \, L(s,\chi) = 0 \text{ for some } s \in \mc{B}(k+1)\}.
$$
By the log-free zero density estimate \cite[p. 428]{IK}, there are constants $C_1,C_2 > 0$ such that $|\Xi(k)| \leq C_1 e^{C_2k}$. We assume here that $q$ is of $A$-Littlewood type, so that if we set $K := A\log\log q$ then\footnote{This assumption allows us to eschew the analysis of the ``Rodosskii'' and ``problem'' ranges that arise in \cite{Sound} and \cite{Harper}. Working a bit harder, one could attempt to extend those ranges as well, however when $u$ or $v/u$ does not grow sufficiently quickly the method of Harper that ought to give the better range $c > 1/(4\sqrt{e})$ does not seem to extend. See Remark \ref{rem:Harperext} below for a discussion of this issue.}
$$
\Xi(k) \neq \emptyset \Rightarrow k \geq K.
$$
Therefore, let $K \leq k \leq \log q/2$. For $\chi \in \Xi(k)$, Mellin inversion yields
$$
\sum_{n \in \mc{T}_{y,z}} \chi(n) \Phi(n/x) = \frac{1}{2\pi i} \int_{\alpha - i \infty}^{\alpha + i\infty} x^s \check{\Phi}(s) L(s,\chi;y,z) ds,
$$ 
where we have written $\check{\Phi}(s) := \int_0^{\infty} \Phi(x) x^{s-1} dx$ to denote the Mellin transform of $\Phi$. 
By the usual integration by parts argument, we have $|\check{\Phi}(s)| \ll_{m,\Phi} (1+|s|)^{-m}$ for all $m > 0$.\\
By a slight modification of \cite[Prop. 1]{Harper}, we can prove the following.
\begin{lem}[cf. Prop. 1 of \cite{Harper}] \label{lem:prop1}
Fix $B > 0$, let $K = A\log\log q \leq k \leq (\log q)/2$ and let $\chi \in \Xi(k)$. Assume that $z \leq y \leq q \leq x$ satisfy \eqref{eq:assumps} with $u \ra \infty$ as $x \ra \infty$, and that $A$ is sufficiently large relative to $C$. Let $x/y \leq X \leq x$ and let $\alpha' := \alpha - Bk/\log X$. Then, provided $x$ is sufficiently large,
$$
\sup_{\ss{\alpha' \leq \sg \leq \alpha \\ |t| \leq q/2}} \left|\log L(\sg + it,\chi;y,z) - \log L(\alpha + it,\chi;y,z)\right| \leq k/50.
$$
\end{lem}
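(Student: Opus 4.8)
\textbf{Proof plan for Lemma \ref{lem:prop1}.}
The plan is to control the logarithmic derivative $\frac{L'}{L}(s,\chi;y,z)$ on the vertical strip $\alpha' \leq \sg \leq \alpha$, $|t| \leq q/2$, and then integrate in $\sg$. Writing $-\frac{L'}{L}(s,\chi;y,z) = \sum_{z < p \leq y} \frac{\chi(p)\log p}{p^s - \chi(p)}$, the difference we must bound is $\int_{\sg}^{\alpha} \left(-\frac{L'}{L}(\rho + it,\chi;y,z)\right) d\rho$ for $\alpha' \leq \sg \leq \alpha$; since $\alpha - \alpha' = Bk/\log X \ll k/\log x$ (as $X \geq x/y$ and $u \to \infty$), it suffices to show $\left|\frac{L'}{L}(\rho + it,\chi;y,z)\right| \ll \log x$ throughout the strip, with an absolute implied constant that we can beat $50$ against by choosing $B$ small. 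The truncated sum $\sum_{z < p \leq y} \frac{\log p}{p^\rho}$ is $O(\log x)$ by the definition of $\alpha$ (when $\rho$ is close to $\alpha$, the defining relation $\sum_{z<p\leq y} \frac{\log p}{p^\alpha - 1} = \log x$ pins this down, using $p > z \geq \exp(\sqrt{\log x})$ so $p^{-\alpha}$ and $(p^\alpha-1)^{-1}$ differ negligibly), so the issue is purely that this is a \emph{crude} bound giving a constant we cannot control.

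The better route, following Harper's Proposition 1, is to relate $\log L(s,\chi;y,z)$ to $\log L(s,\chi)$ — the full Dirichlet $L$-function — plus the Euler factors at primes $p \leq z$ and $p > y$, and then use the zero-free information encoded in $\chi \in \Xi(k)$. Concretely, $\log L(s,\chi;y,z) = \log L(s,\chi) - \sum_{p \leq z}\log(1-\chi(p)p^{-s})^{-1} - \sum_{p > y}\log(1-\chi(p)p^{-s})^{-1}$. For the tail $p > y$ part: on $\text{Re}(s) \geq \alpha$, since $\alpha = 1 - \frac{\log(u\log u)+O(1)}{\log y}$ we have $p^{-\alpha} \leq y^{-\alpha} \ll (u\log u)/y$ for $p > y$, and $\sum_{p>y} p^{-\alpha}\log p$ telescopes against the same quantity at $\alpha + it$, contributing $\ll u\log u \ll$ (something much smaller than $k$, using $u\log u$ controlled — actually here one wants the Saias estimate to guarantee this is $o(k)$). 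For the head $p \leq z$ part: here $z \geq \exp(\sqrt{\log x})$ and $|t| \leq q/2 \leq x$, so one invokes the classical zero-free region / Vinogradov-type bounds for $\log L(s,\chi)$ restricted to the short prime range, or rather directly estimates $\sum_{p \leq z} \frac{\chi(p)\log p}{p^\sg}$ which is $O(\log z) = O((\log x)^2/\log y \cdot \text{stuff})$ — one must check this is $\leq k/200$, which forces using that $K = A\log\log q$ is large, i.e., the $A$-Littlewood hypothesis. The key analytic input is that for $\chi \in \Xi(k)$, $\log L(s,\chi)$ has no zeros in $\mc{B}(k)$, so by Borel--Carathéodory / the standard Hadamard three-circles argument one gets $\left|\log L(\sg+it,\chi) - \log L(\alpha + it,\chi)\right| \ll k$ on the relevant strip, and a careful tracking of the constant (exactly as in Harper) yields $\leq k/100$, say.

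The main obstacle I anticipate is bookkeeping the absolute constants so that the final bound is genuinely $\leq k/50$ rather than merely $O(k)$: this requires (a) choosing $B$ small enough at the cost of widening nothing essential (since $\alpha - \alpha'$ scales with $B$), (b) exploiting $k \geq K = A\log\log q$ with $A$ large to absorb all the "boundary" contributions (the $p \leq z$ Euler factors, the $p > y$ Euler factors, and the error in $\alpha$'s formula) into, say, $k/200$ each, and (c) importing Harper's Borel--Carathéodory estimate for $\log L(s,\chi)$ verbatim, checking its constant is compatible. The assumption that $A$ is sufficiently large relative to $C$ (recall $q \leq y^C$) is exactly what makes $\log z \ll (\log x)^2/\log y$ and the tail sums small relative to $K$; concretely $\log z = (\log x)^2/(\log y \cdot \text{factor})$ via $v/u \geq (\log u)^2$, and one needs $A\log\log q$ to dominate the resulting $O(\log z / \log x)$-type errors after the $\sg$-integration over a strip of width $\ll k/\log x$. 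I would set this up so that the only "new" work beyond citing \cite[Prop. 1]{Harper} is verifying that the $A$-Littlewood hypothesis cleanly replaces the Rodosskii/problem-range analysis, which is precisely the simplification flagged in the footnote.
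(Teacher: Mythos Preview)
Your proposed decomposition $\log L(s,\chi;y,z) = \log L(s,\chi) - \sum_{p \leq z}(\cdots) - \sum_{p > y}(\cdots)$ is not well-defined on the relevant strip: since $\alpha < 1$, the tail $\sum_{p > y}\log(1-\chi(p)p^{-s})^{-1}$ diverges, and even the termwise difference between its values at $\sg+it$ and $\alpha+it$ diverges (each summand has size $\asymp (\alpha-\sg)p^{-\alpha'}\log p$, and $\sum_{p>y}p^{-\alpha'}\log p = \infty$). There is therefore no way to isolate the full $\log L(s,\chi)$ and invoke Borel--Carath\'eodory on it; nor is this Harper's actual route. This is a genuine gap, not a bookkeeping issue.

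The paper's argument is in fact much closer to the approach you dismissed in your first paragraph. One writes $\log L(s,\chi;y,z) = \log L(s,\chi;y) - \log L(s,\chi;z)$ with $L(s,\chi;r) := \prod_{p\leq r}(1-\chi(p)p^{-s})^{-1}$, and for each $r \in [z,y]$ bounds
\[
|\log L(\sg+it,\chi;r) - \log L(\alpha+it,\chi;r)| \leq \frac{Bk}{\log X}\Bigl(\sup_{\alpha'\leq\sg\leq\alpha}\Bigl|\sum_{n\leq r}\frac{\Lambda(n)\chi(n)}{n^{\sg+it}}\Bigr| + O(1)\Bigr)
\]
by integrating the log-derivative across the strip. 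The point you missed is that this partial sum is \emph{not} merely $O(\log x)$ with an uncontrolled constant: one evaluates it via a smoothed explicit formula (a trapezoidal weight $w$ supported on $[1,Rr]$, then a sum over the zeros of $L(s,\chi)$). The zero-free information $\chi \in \Xi(k)$, together with the verification that $\alpha' \geq 1 - k/(4\log q)$ once $A$ is large relative to $C$ and $u$ is large, controls the zero sum and yields
\[
\sum_{n\leq r}\frac{\Lambda(n)\chi(n)}{n^{\sg+it}} \ll \log q + \frac{(\log q)^2}{k\log r}.
\]
Since $\log q \leq C\log y$ and $\log X \geq (u-1)\log y$, the product $\tfrac{Bk}{\log X}\cdot O(\log q)$ is $O(k/u)$, and the second term is handled similarly using $k\geq K = A\log\log q$ and $y \leq z^{\log\log x}$. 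The whole expression is therefore $o(k)$ as $u\to\infty$, so the bound $\leq k/50$ holds for large $x$ with no delicate constant-tracking whatsoever.
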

\begin{proof}
Let $\alpha' \leq \sg \leq \alpha$ and $|t| \leq q/2$. We of course have
\begin{align*}
\left|\log L(\sg + it,\chi;y,z) - \log L(\alpha + it,\chi;y,z)\right| &\leq 2 \max_{z \leq r \leq y} |\log L(\sg+it,\chi;r) - \log L(\alpha + it,\chi;r) |,
\end{align*}
writing $L(s,\chi;r) := L(s,\chi;r,1)$. Now, fix $z \leq r \leq y$ for the time being. We have (as in \cite[p. 196]{Harper})
$$
\sup_{\alpha' \leq \sg \leq \alpha}|\log L(\sg+it,\chi;r) - \log L(\alpha + it,\chi;r)| \leq \frac{Bk}{\log X} \left(\sup_{\alpha' \leq \sg \leq \alpha} \left|\sum_{n \leq r} \frac{\Lambda(n)\chi(n)}{n^{\sg+it}} \right| + O(1)\right).
$$
We define $R = R(r) := \max\{2,r^{r^{-k/(2\log q)}}\}$ so that $2 \leq R \leq r$, and define the weight function $w = w_R$ by
$$
w(n) := \begin{cases} 1 &\text{ if } 1 \leq n \leq r \\ 1-\tfrac{\log(n/r)}{\log R} & \text{ if } r < n \leq Rr  \\ 0 &\text{ otherwise.} \end{cases} 
$$
Following the argument of \cite{Harper}, 
$$
\sum_{n\leq Rr} w(n)\frac{\Lambda(n)\chi(n)}{n^{\sg+it}} \ll \log q + \frac{1}{\log R} \left|\sum_{\ss{L(\rho,\chi) = 0 \\ 0 < \text{Re}(\rho) < 1}}\frac{(Rr)^{\rho-\sg-it} - r^{\rho - \sg-it}}{(\rho-\sg-it)^2}\right|.
$$
As for all non-trivial zeros $\rho$ of $L(s,\chi)$ we have $\text{Re}(\rho) - \sg < 1-\alpha'$, and as $C\log y \geq \log q \geq \sqrt{\log x}$ and $k \geq A \log\log q$, we have
$$
\alpha' = 1-\frac{\log(u\log u)+O(1)}{\log y} - \frac{Bk}{\log X} \geq 1-\frac{k}{\log q}\left(\frac{C}{A} + \frac{B}{u-1} + o(1)\right) \geq 1-\frac{k}{4\log q}
$$ 
provided $u$ is sufficiently large, and $A$ is large enough relative to $C$. Using standard bounds on the number of zeros of $L(s,\chi)$ with $|\text{Im}(\rho)| \leq q$, the above is bounded by
$$
\ll r^{-3k/(4\log q)} \frac{(\log q)^2}{k \log R} + \frac{1}{\log R} \sum_{|\text{Im}(\rho)| > q} \frac{(Rr)^{k/(4\log q)}}{|\text{Im}(\rho)|^2}  \ll \frac{(\log q)^2}{k\log r} + \frac{y^{1/4} \log q}{q} \ll \frac{(\log q)^2}{k\log r}.
$$
Following Harper's argument verbatim (noting that $r \leq y \leq q$ for us), the difference between the weighted and unweighted sums is
$$
\left|\sum_{r < n \leq Rr} \frac{\Lambda(n)w(n)\chi(n)}{n^{\sg+it}}\right| \ll \log q.
$$
We deduce therefore that
\begin{align*}
\sup_{\alpha' \leq \sg \leq \alpha}|\log L(\sg+it,\chi;r) - \log L(\alpha + it,\chi;r)| \ll \frac{Bk}{\log X} \left( \log q + \frac{(\log q)^2}{k \log r}\right).
\end{align*}
By \eqref{eq:assumps}, we have
$$
\frac{B\log q}{\log X} + \frac{B(\log q)^2}{k (\log r)\log X} \leq \frac{BC}{u-1} + \frac{BC^2 \log y}{K(u-1) \log z} \leq\frac{2BC}{u}\left(1+ \frac{C \log\log x}{A \log\log q}\right) \ra 0
$$
as $x\ra \infty$, the claim follows with $x$ sufficiently large.
\end{proof}
We also need the following results. 
\begin{lem}\label{lem:coll}
(a) Let $T \leq x^{1/4}$ and let $\sg > 0$. Then
$$
\sup_{0 \leq t \leq T} \left|\int_t^T x^{ir} \check{\Phi}(\sg + ir) dr \right| \ll \frac{1}{\log x}.
$$
(b) (Majorant principle) Let $(a_n)_n,(b_n)_n$ be complex-valued sequences with $|a_n| \leq b_n$ for all $n$. Let $T \geq 0$ and $N \geq 1$. Then
$$
\int_{-T}^T \left|\sum_{1 \leq n \leq N} a_n n^{-it}\right|^2 dt \leq 3\int_{-T}^T \left|\sum_{1 \leq n \leq N} b_n n^{-it}\right|^2 dt. 
$$
(c) Assume that $z \geq (\log q)^2$ and $T \leq (\log z)^{100}$. For every $1/2 < \sg \leq \alpha$ we have
$$
\int_{-T}^T \left|\frac{L'(\sg + it,\chi;y,z)}{L(\sg+it,\chi;y,z)}\right|^2 dt \ll y^{2(\alpha - \sg)} u^2 (\log u) \log y.
$$
(d) There is a constant $c > 0$ such that the following holds: for $|t| \leq y$ we have
\begin{align*}
\left|\frac{L(\alpha + it,\chi_0;y,z)}{L(\alpha,\chi_0;y,z)}\right| \ll \begin{cases} e^{-cu(t\log y)^2} & \text{ if } |t|\log y \leq 1, \\ \exp\left(-cu \frac{(t\log y)^2}{(t\log y)^2 + \log(u\log u)^2}\right) &\text{ if } |t| \log y > 1. \end{cases}
\end{align*}
(e) Let $1 \leq T \leq q/2$. Then, under the assumptions of Lemma \ref{lem:prop1}, we have
$$
\int_{-T}^T |L(\sg+it,\chi;y,z)|^2 dt \ll e^{k/25} L(\alpha,\chi_0;y,z)^2 \left(\frac{1}{\log y} + Te^{-2cu}\right),
$$
for some absolute constant $c > 0$.
\end{lem}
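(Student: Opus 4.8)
These five estimates are the harmonic-analytic input for the contour-shift argument behind Proposition~\ref{prop:soundGen}: (a) and (b) are classical, (c) is a short mean-value estimate for a Dirichlet polynomial supported on the primes in $(z,y]$, (d) is a saddle-point estimate for the friable zeta function $\zeta(s,y,z)=\zeta(s,y)/\zeta(s,z)$, and (e) is assembled out of (b), (d) and Lemma~\ref{lem:prop1}. For (a) the plan is to write $x^{ir}=(i\log x)^{-1}\tfrac{d}{dr}x^{ir}$ and integrate by parts in $r$. Since $\Phi$ is smooth and supported in $[0,1]$, repeated integration by parts in the $x$-variable of $\check{\Phi}(s)=\int_0^1\Phi(x)x^{s-1}\,dx$ shows that $\check{\Phi}$ and $\check{\Phi}'$ are holomorphic for $\text{Re}(s)>0$ and decay faster than any power of $|\text{Im}(s)|$; hence the boundary terms produced by the integration by parts are $\ll(\log x)^{-1}$, and the remaining integral is $\ll(\log x)^{-1}\int_{\R}|\check{\Phi}'(\sigma+ir)|\,dr\ll(\log x)^{-1}$, uniformly in $t$. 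Part (b) is the classical majorant principle for Dirichlet polynomials: expanding $\int_{-T}^{T}\left|\sum_n a_n n^{-it}\right|^{2}dt=\sum_{m,n}a_m\overline{a_n}\cdot 2\sin(T\log(m/n))/\log(m/n)$ and comparing with a nonnegative even majorant of $1_{[-T,T]}$ whose Fourier transform is also nonnegative shows that replacing each $a_n$ by $b_n\ (\geq|a_n|)$ costs at most a factor $3$.

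For (c) I would expand $\tfrac{L'}{L}(s,\chi;y,z)=-\sum_{z<p\le y}\sum_{k\ge1}\chi(p)^{k}(\log p)p^{-ks}$, discard the terms with $k\geq2$ (which in the range of $\sigma$ and $z$ at issue contribute negligibly against the target), apply the majorant principle (b) to the $k=1$ part to replace $\chi(p)$ by $1$, and then majorise $1_{[-T,T]}(t/T)$ by a smooth even function $\Psi\geq1_{[-1,1]}$ with compactly supported Fourier transform, which restricts the resulting double sum to the pairs with $|\log(p/p')|\ll 1/T$:
\[
\int_{-T}^{T}\left|\sum_{z<p\le y}\frac{\log p}{p^{\sigma+it}}\right|^{2}dt\ \ll\ T\sum_{\substack{z<p,\,p'\le y\\ |\log(p/p')|\ll 1/T}}\frac{\log p\,\log p'}{(pp')^{\sigma}}.
\]
Counting the admissible $p'$ in each window by the Brun--Titchmarsh inequality (an upper bound is enough, and $p/T\geq p^{1-o(1)}$ since $T\leq(\log z)^{100}$ and $p>z\geq\exp(\sqrt{\log x})$) bounds the right-hand side by $\asymp\sum_{z<p\le y}(\log p)\,p^{1-2\sigma}$; partial summation, the prime number theorem, the defining relation $\sum_{z<p\le y}(\log p)(p^{\alpha}-1)^{-1}=\log x$ and \eqref{eq:alphaEst} then identify this quantity with $\asymp y^{2(\alpha-\sigma)}u^{2}(\log u)\log y$, which is (c).

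For (d) I would start from the identity $\log\big|\zeta(\alpha+it,y,z)/\zeta(\alpha,y,z)\big|=-2\sum_{z<p\le y}p^{-\alpha}\sin^{2}(t\log p/2)+O(1)$, the $O(1)$ absorbing the $k\geq2$ prime powers. For $|t|\log y\leq1$ one has $\sin^{2}(t\log p/2)=(t\log p/2)^{2}\big(1+O((t\log y)^{2})\big)$ for every $p\leq y$, so the right-hand side equals $-(\tfrac12+o(1))t^{2}\sum_{z<p\le y}(\log p)^{2}p^{-\alpha}$; and since $\sum_{z<p\le y}(\log p)^{2}p^{-\alpha}\asymp y^{1-\alpha}(\log y)/(1-\alpha)\asymp u(\log y)^{2}$ by partial summation and \eqref{eq:alphaEst}, this yields the Gaussian bound $\ll e^{-cu(t\log y)^{2}}$. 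For $|t|\log y>1$ the Taylor expansion fails for the larger $p$, and one must instead exhibit a subfamily of primes carrying a positive proportion of the $p^{-\alpha}$-mass on which $\sin^{2}(t\log p/2)$ is bounded below; here the fact that all primes $\leq z\geq\exp(\sqrt{\log x})$ have been removed makes $\zeta(s,y,z)$ smooth enough for the classical friable saddle-point estimates (as in \cite{Saias3}, \cite{dlBTen}) to give the stated decay throughout $|t|\leq y$. This step is, I expect, the main obstacle of the whole lemma.

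Finally, for (e): fix $\chi\in\Xi(k)$, $\alpha'\leq\sigma\leq\alpha$ and $|t|\leq T\leq q/2$. Lemma~\ref{lem:prop1} gives $|\log L(\sigma+it,\chi;y,z)-\log L(\alpha+it,\chi;y,z)|\leq k/50$, hence $|L(\sigma+it,\chi;y,z)|^{2}\leq e^{k/25}|L(\alpha+it,\chi;y,z)|^{2}$. Since $q$ is a prime $>y$, every $y$-friable integer is coprime to $q$, so $L(\alpha+it,\chi;y,z)=\sum_{n\in\mc{T}_{y,z}}\chi(n)n^{-\alpha-it}$ with $|\chi(n)|\leq1$, and the majorant principle (applied to finite truncations, then letting the truncation tend to infinity) gives
\begin{align*}
\int_{-T}^{T}|L(\alpha+it,\chi;y,z)|^{2}dt&\leq 3\int_{-T}^{T}|L(\alpha+it,\chi_{0};y,z)|^{2}dt\\
&=3\,L(\alpha,\chi_{0};y,z)^{2}\int_{-T}^{T}\left|\frac{L(\alpha+it,\chi_{0};y,z)}{L(\alpha,\chi_{0};y,z)}\right|^{2}dt.
\end{align*}
Splitting this integral at $|t|=1/\log y$ and inserting (d) (noting $\zeta(\cdot,y,z)=L(\cdot,\chi_{0};y,z)$), the range $|t|\log y\leq1$ contributes $\ll L(\alpha,\chi_{0};y,z)^{2}\int_{\R}e^{-2cu(t\log y)^{2}}dt\ll L(\alpha,\chi_{0};y,z)^{2}(\log y)^{-1}$, while on $1/\log y<|t|\leq T$ the bound from (d) contributes $\ll L(\alpha,\chi_{0};y,z)^{2}\big((\log y)^{-1}+Te^{-2cu}\big)$, the $Te^{-2cu}$ term coming from the subrange $|t|\log y\gtrsim\log(u\log u)$ and the remainder from a Gaussian tail. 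Multiplying through by $e^{k/25}$ then gives (e). Thus the only substantive analytic input is the friable estimate in (d) flagged above, which I would carry out by following the saddle-point arguments of Harper \cite{Harper} and reducing, via $\zeta(s,y,z)=\zeta(s,y)/\zeta(s,z)$ and the asymptotics recorded in Lemma~\ref{lem:SaiasAsymp}, to the singly-restricted case.
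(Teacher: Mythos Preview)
Your treatment of (a), (b), (d) and (e) agrees with the paper's: it simply cites Harper for (a), Tenenbaum for (b) and Saias for (d), and carries out exactly the chain Lemma~\ref{lem:prop1} $\Rightarrow$ majorant principle $\Rightarrow$ (d) for (e), splitting the $t$-integral precisely as you do. Your sketch of (d) is in fact more explicit than the paper, which is content to quote \cite[Lem.~10]{Saias3}.

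The one genuine methodological difference is in (c). You proceed via a mean-value argument: majorant principle, then a smooth majorant for $1_{[-T,T]}$ to localise to $|\log(p/p')|\ll 1/T$, then Brun--Titchmarsh to count primes in these short windows. This works and lands on the right bound, but the paper takes a shorter route. It first observes (for $\chi_0$) that by partial summation and the prime number theorem one has the \emph{pointwise} estimate
\[
\sum_{z<n\leq y}\frac{\Lambda(n)}{n^{\sigma+it}}=\frac{y^{1-\sigma-it}-z^{1-\sigma-it}}{1-\sigma-it}+O\!\left((1+|t|)\,y^{1-\sigma}e^{-\sqrt{\log z}}\right),
\]
which, since $y^{1-\sigma}\asymp y^{\alpha-\sigma}u\log u$, is $\ll y^{\alpha-\sigma}u(\log u)\min\{1/|t|,\,(\log y)/\log u\}$ up to a negligible error. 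One then simply squares and integrates over $|t|\leq T$ to obtain $\ll y^{2(\alpha-\sigma)}u^{2}(\log u)\log y$, and finally invokes the majorant principle (b) with $a_n=\chi(n)\Lambda(n)n^{-\sigma}$, $b_n=\chi_0(n)\Lambda(n)n^{-\sigma}$ to pass from $\chi_0$ to $\chi$. Your Brun--Titchmarsh route is more robust (it would survive without the prime number theorem), but the paper's pointwise PNT bound is quicker here because the Dirichlet polynomial is supported on a single dyadic-scale block of primes, so the main term is an explicit oscillatory expression rather than something requiring averaging to detect cancellation.
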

\begin{rem} \label{rem:Harperext}
Note that in (c), the factor $\log y$ arises from the larger primes in the support of $(L'/L)(s,\chi;y,z)$. In its application in Lemma \ref{lem:intSg}, this factor causes a loss in precision unless it can be compensated by the bound in (e). However, this is only possible provided $u$ is sufficiently large (specifically, of size $\gg \log\log y$). The reason for this is that there is not enough oscillation in $L(s,\chi;y,z)$ to give cancellation of size $\log y$ (since $\alpha$ is close enough to $1$, the maximal amount of possible cancellation in prime sums is approximately $\log(\tfrac{\log y}{\log z})$, which in our applications in this paper are an arbitrarily slowly growing function of $x$).  For this reason, we are unable to extend the unconditional argument of Harper to obtain $q^{1/(4\sqrt{e})+\e}$-smooth numbers with few prime factors in arithmetic progressions modulo $q$, and therefore also to extend our range $c>1/4$ to $c > 1/(4\sqrt{e})$ in Theorem \ref{thm:dil}.
\end{rem}
\begin{proof}
(a) This is immediate from \cite[App. B]{Harper}. \\
(b) This can be found e.g. in \cite[Ch. III.4]{TenBook}. \\
(c) Since $\sigma > 1/2$ and $z \geq \exp(\sqrt{\log q})$,
$$
\frac{L'(s,\chi_0;y,z)}{L(s,\chi_0;y,z)} = \sum_{z < n \leq y} \frac{\Lambda(n)}{n^s} + O\left(\frac{1}{z^{1/4}}\right).
$$
By partial summation and the prime number theorem, we have
\begin{align*}
\sum_{z < n \leq y} \frac{\Lambda(n)}{n^{\sg+it}}  = \frac{y^{1-\sg - it} - z^{1-\sg-it}}{1-\sg-it} + O\left((1+|t|) y^{1-\sg} e^{-\sqrt{\log z}}\right).
\end{align*}
As $\sg \leq \alpha$ and $y^{1-\sg} \asymp y^{\alpha-\sg} u\log u$, for all $|t| \leq T$, we get
$$
\left|\sum_{z < n \leq y} \frac{\Lambda(n)}{n^{\sg+it}}\right| \ll y^{\alpha-\sg} u(\log u)\left( \min\left\{\frac{1}{|t|}, \frac{\log y}{\log u}\right\} + (1+|t|) e^{-\sqrt{\log z}}\right).
$$
Squaring both sides and integrating over $|t| \leq T$, we get the bound
\begin{align*}
\int_{-T}^T \left|\frac{L'(\sg + it,\chi_0;y,z)}{L(\sg+it,\chi_0;y,z)}\right|^2 dt \ll y^{2(\alpha - \sg)} (u \log u)^2 \left(\frac{\log y}{\log u} + T^3 e^{-2\sqrt{\log z}}\right).
\end{align*}
The claimed bound (for $\chi$, rather than $\chi_0$) now follows upon applying (b) (taking $a_n := \chi(n)\Lambda(n)n^{-\sg}$ and $b_n := \chi_0(n) \Lambda(n)n^{-\sg}$). \\
(d) This is extracted from \cite[Lem. 10]{Saias3}. \\
(e) By Lemma \ref{lem:prop1}, as long as $\alpha' \leq \sg \leq \alpha$ and $T\leq q/2$ then we get
$$
\int_{-T}^T |L(\sg+it,\chi;y,z)|^2 dt \ll e^{k/25} \int_{-T}^T |L(\alpha + it,\chi;y,z)|^2 dt.
$$
We now apply (b) to get
$$
\int_{-T}^T |L(\alpha+it,\chi;y,z)|^2 dt \ll \int_{-T}^T |L(\alpha+it,\chi_0;y,z)|^2 dt = L(\alpha,\chi_0;y,z)^2 \int_{-T}^T \left|\frac{L(\alpha+it,\chi_0;y,z)}{L(\alpha,\chi_0;y,z)}\right|^2 dt.
$$
Splitting the range of $|t| \leq T$ into the ranges $|t| \leq 1/\log y$, $1 < |t|\log y \leq \log(u\log u)$ and $\log(u\log u)< |t|\log y \leq T\log y$, we get by (c) that
$$
\int_{-T}^T \left|\frac{L(\alpha+it,\chi_0;y,z)}{L(\alpha,\chi_0;y,z)}\right|^2 dt \ll \frac{1}{\log y} + T e^{-2cu},
$$
and the claim follows.
\end{proof}
The parts of the previous lemma combine to yield the following.
\begin{lem} \label{lem:intSg}
Fix $B > 0$. Let $x,y,z$ and $q$ satisfy \eqref{eq:assumps} with $u$ sufficiently large, and let $x/y \leq X \leq x$.  Let $K \leq k \leq (\log q)/2$ and suppose $\chi \in \Xi(k)$. Finally, put $\alpha' := \alpha - Bk/\log X$, let $\alpha' \leq \sg \leq \alpha$ and $1 \leq T \leq (\log z)^{100}$.  Then
$$
\left|\int_{\sg - iT}^{\sg+iT} X^s \check{\Phi}(s) L(s,\chi;y,z) ds \right| \ll e^{k/50} X^{\sg-\alpha}\Theta(x,y,z) \sqrt{u\log u}\left(1 + \sqrt{T\log y} e^{-cu}\right).
$$
\end{lem}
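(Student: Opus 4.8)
The plan is to parametrise the vertical segment as $s=\sigma+it$, pull the factor $X^{\sigma}$ out front, and then gain a factor $1/\log X$ by a single integration by parts in $t$, using $\tfrac{d}{dt}X^{it}=i(\log X)X^{it}$. Since $\log X\geq\log(x/y)=(u-1)\log y\gg u\log y$, this gain is the crucial mechanism: the $L^2$-estimates of Lemma~\ref{lem:coll} on their own only deliver a bound of size $\zeta(\alpha,y,z)/\sqrt{\log y}$ (with $\zeta(\alpha,y,z)=L(\alpha,\chi_0;y,z)$), but the extra $1/\log X\asymp 1/(u\log y)$ upgrades this to the required $\zeta(\alpha,y,z)\sqrt{u\log u}/(\sqrt{\log x\log y})$, at which point \eqref{eq:ThetaBd} of Lemma~\ref{lem:SaiasAsymp}, in the form $\zeta(\alpha,y,z)\asymp x^{-\alpha}\Theta(x,y,z)\sqrt{\log x\log y}$, converts the answer into precisely $e^{k/50}X^{\sigma-\alpha}\Theta(x,y,z)\sqrt{u\log u}(1+\sqrt{T\log y}\,e^{-cu})$.

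Concretely, writing $F(s):=\check\Phi(s)L(s,\chi;y,z)$ and integrating by parts gives a boundary term $\tfrac{X^\sigma}{\log X}[e^{it\log X}F(\sigma+it)]_{-T}^{T}$ plus $-\tfrac{iX^\sigma}{\log X}\int_{-T}^{T}e^{it\log X}F'(\sigma+it)\,dt$, where $F'=\check\Phi'\cdot L+\check\Phi\cdot L'$. The boundary term is handled trivially: by $|\check\Phi(\sigma\pm iT)|\ll_m(1+T)^{-m}$ and the pointwise bound $|L(\sigma\pm iT,\chi;y,z)|\leq e^{k/50}\zeta(\alpha,y,z)$ coming from Lemma~\ref{lem:prop1} (applicable since $1\leq T\leq(\log z)^{100}\leq q/2$), it contributes $\ll X^{\sigma}e^{k/50}\zeta(\alpha,y,z)/\log X$, which is well within target after inserting $\log X\gg u\log y$ and \eqref{eq:ThetaBd}. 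For the $\check\Phi'\cdot L$ part of the main term I pair $\int_{-T}^{T}|\check\Phi'(\sigma+it)|^2\,dt\ll 1$ with Lemma~\ref{lem:coll}(e) via Cauchy--Schwarz; because $\check\Phi'$ behaves like $\check\Phi$ (size $\ll 1$), the only thing making this acceptable is again the $1/\log X$ from the integration by parts, which turns the resulting $\zeta(\alpha,y,z)/\sqrt{\log y}$ into $\zeta(\alpha,y,z)/(u(\log y)^{3/2})$, far below what is needed.

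The delicate piece is the $\check\Phi\cdot L'$ part, i.e.\ $\tfrac{X^{\sigma}}{\log X}\int_{-T}^{T}|\check\Phi(\sigma+it)|\,|L'(\sigma+it,\chi;y,z)|\,dt$. I would write $L'=L\cdot(L'/L)$, Cauchy--Schwarz to separate the two factors, use Lemma~\ref{lem:coll}(c) for the mean square of $L'/L$ (this is exactly where the hypotheses $z\geq(\log q)^2$ and $T\leq(\log z)^{100}$ are used) and Lemma~\ref{lem:coll}(e) for that of $L$ (together with Lemma~\ref{lem:coll}(b) and (d) to reduce $L$ to $\chi_0$). Lemma~\ref{lem:coll}(c) carries a factor $y^{2(\alpha-\sigma)}$ and a factor $\log y$, both coming from the primes near $y$ in the support of $L'/L$; since $\alpha-\sigma\leq\alpha-\alpha'=Bk/\log X$ and $\log X\geq(u-1)\log y$, one has $y^{\alpha-\sigma}\leq\exp(Bk\log y/\log X)\leq e^{k/50}$ once $u$ is large enough in terms of $B$, so this loss is absorbed into the $e^{k/50}$ on the right-hand side, while the $\log y$ is cancelled against the $(\log y)^{-1/2}$ produced by Lemma~\ref{lem:coll}(e) and the residual $1/\log X$. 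The $e^{-cu}$-dependent term in Lemma~\ref{lem:coll}(e) propagates exactly to the $\sqrt{T\log y}\,e^{-cu}$ in the conclusion. The main obstacle is precisely this bookkeeping: one has to verify that every surplus power of $y$ or of $e^{ck}$ — arising from $L'/L$ being dominated by primes of size nearly $y$, from Lemma~\ref{lem:prop1}, and from the log-free zero-density input hidden in Lemma~\ref{lem:coll}(e) — is swallowed either by the integration-by-parts gain $1/\log X\asymp 1/(u\log y)$ (legitimate because $X\geq x/y$) or by the exponent $e^{k/50}$ and the factor $X^{\sigma-\alpha}$ on the right, and that the ranges in \eqref{eq:assumps} together with "$u$ sufficiently large" are just strong enough for this; the remaining manipulations are routine.
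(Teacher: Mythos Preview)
Your approach is correct and essentially matches the paper's: both gain a factor $1/\log X$ by one integration by parts on the vertical segment and then apply Cauchy--Schwarz with parts (c) and (e) of Lemma~\ref{lem:coll}, absorbing $y^{\alpha-\sigma}\le e^{Bk/(u-1)}$ into the $e^{k/50}$ and converting $\zeta(\alpha,y,z)$ to $\Theta(x,y,z)$ via \eqref{eq:ThetaBd}. The only cosmetic difference is that the paper invokes \cite[Lem.~1]{Harper} together with Lemma~\ref{lem:coll}(a) (so that $X^{it}\check\Phi(\sigma+it)$ is anti-differentiated as a whole), whereas you anti-differentiate $X^{it}$ alone and pick up an extra, harmless $\check\Phi'\cdot L$ term.
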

\begin{proof}
On combining \cite[Lem. 1]{Harper} (taking $G(s)$ to be the Euler product of the function $g(p) = 1_{(z,y]}(p)$ on primes, and $F(s) = X^s\check{\Phi}(s)$) with Lemma \ref{lem:coll}(a), we get
\begin{align*}
&\left|\int_{\sg - iT}^{\sg+iT} X^s \check{\Phi}(s) L(s,\chi;y,z) ds \right| \\
&\ll \frac{X^{\sg}}{\log x} \left(\sup_{0 \leq t \leq T} |L(\sg+it,\chi;y,z)| + \left(\int_{-T}^T \left|\frac{L'(\sg+it,\chi;y,z)}{L(\sg+it,\chi;y,z)}\right|^2 dt\right)^{1/2} \left(\int_{-T}^T |L(\sg+it,\chi;y,z)|^2 dt\right)^{1/2}\right).
\end{align*}
By Lemma \ref{lem:prop1} and the triangle inequality, we have $|L(\sg+it,\chi;y,z)|\leq e^{k/50} L(\alpha,\chi_0;y,z)$. 
Combining (c) and (e) of Lemma \ref{lem:coll}, the previous estimate is
$$
\ll e^{k/50}\frac{X^{\sigma} L(\alpha,\chi_0;y,z)}{\log X} \left(1 + u \sqrt{\log u \log y}\left(\frac{1}{\sqrt{\log y}} + \sqrt{T}e^{-cu}\right)\right).
$$
As $u \geq 10$ and $L(\alpha,\chi_0;y,z) \leq \zeta(\alpha,y,z)$, inserting \eqref{eq:ThetaBd} into this last estimate implies the claim.
\end{proof}
\begin{proof}[Proof of Proposition \ref{prop:soundGen}]
Let $1 \leq d \leq y$, and as before we write $u := (\log x)/(\log y)$. Recalling the definition of $\Xi(k)$ and the bound $|\Xi(k)| \leq C_1 e^{C_2k}$ for $K \leq k \leq (\log q)/2$, we may rewrite \eqref{eq:orthoChar} as
\begin{align}\label{eq:expink}
\Theta(x/d,y,z;q,a; \Phi) = \frac{1}{\phi(q)} \sum_{n \in \mc{T}_{y,z}} \Phi(dn/x) 1_{(n,q) = 1} + O\left(\frac{1}{\phi(q)}\sum_{K \leq k \leq (\log q)/2} e^{C_2k} \max_{\chi \in \Xi(k)} \left|\sum_{n \in \mc{T}_{y,z}} \chi(n)\Phi(dn/x)\right|\right). 
\end{align}
Fix $K \leq k \leq (\log q)/2$ and $\chi \in \Xi(k)$ for the moment. Applying Mellin inversion along the line $\text{Re}(s) = \alpha$ and using the rapid decay of $\check{\Phi}$, we may truncate at height $|\text{Im}(s)| = q^{1/2}$ with error 
$$
\sum_{n \in \mc{T}_{y,z}} \chi(n)\Phi(nd/x) = \frac{1}{2\pi i} \int_{\alpha - i\sqrt{q}}^{\alpha+i\sqrt{q}} (x/d)^s \check{\Phi}(s) L(\alpha + it, \chi;y,z) dt + O\left(\frac{(x/d)^{\alpha} L(\alpha,\chi_0;y,z)}{q^{B+10}}\right).
$$
We apply the residue theorem to shift the line of integration to $\text{Re}(s) = \alpha' = \alpha - Bk/\log(x/d)$. By Lemma \ref{lem:prop1},
the horizontal lines contribute
$$
\ll_B q^{-(B+10)} (x/d)^{\alpha} \max_{\alpha' \leq \sg \leq \alpha} |L(\sg,\chi;y,z)| \ll e^{k/50} q^{-(B+10)} (x/d)^{\alpha} \zeta(\alpha, y,z) \ll e^{-k(B-1/50)} d^{-\alpha} \Theta(x,y,z).
$$
Let $T = (\log z)^{10}$; since $z \geq \exp(\sqrt{\log x})$, we have $T \geq (\log x)^5$. We further truncate the integral along $\text{Re}(s) = \alpha'$ to $|\text{Im}(s)| = T$. Using the rapid decay of $\check{\Phi}$ once again, together with Lemmas \ref{lem:SaiasAsymp} and \ref{lem:prop1}, the contribution from $|t| \geq T$ is
$$
\ll_C e^{-k(B-1/50)}(\log x)^{-10}(x/d)^{\alpha}\zeta(\alpha,y,z) \ll e^{-k(B-1/50)} d^{-\alpha}\Theta(x,y,z).
$$
Using Lemma \ref{lem:intSg} to estimate the integral along $\text{Re}(s) = \alpha'$, we get 
$$
\sum_{n \in \mc{T}_{y,z}} \chi(n)\Phi(nd/x) \ll e^{-k(B-1/50)} d^{-\alpha}\Theta(x,y,z) \sqrt{u\log u} \left(1+e^{-cu} \sqrt{T\log y}\right). 
$$
Since this holds for every $\chi \in \Xi(k)$ and each $K \leq k \leq (\log q)/2$, inserting this into \eqref{eq:expink} yields the error term contribution
\begin{align*}
&\ll \frac{\Theta(x,y,z)}{d^{\alpha}\phi(q)} \left(\sqrt{u\log u} + T\sqrt{\log y} e^{-cu/2}\right)\sum_{K \leq k \leq (\log q)/2} e^{C_2k-k(B-1/50)}\\
&\ll \frac{\Theta(x,y,z)}{d^{\alpha}\phi(q)} e^{-2K}\left(\sqrt{u\log u} + T\sqrt{\log y} e^{-cu/2}\right), 
\end{align*}
provided that $B \geq C_2 + 3$. Since $T \leq (\log y)^{10}$, $u \leq \log x$ and $K \geq A \log\log q \geq \tfrac{1}{2}A \log\log x$, if $A$ is sufficiently large this yields
\begin{align}\label{eq:MTTodo}
\Theta(x,y,z;q,a; \Phi) = \frac{1}{\phi(q)} \sum_{n \in \mc{T}_{y,z}} \Phi(dn/x) 1_{(n,q) = 1} + O\left(\frac{\Theta(x,y,z)}{d^{\alpha}\phi(q) (\log x)^{10}}\right). 
\end{align}
We now remove the smoothing $\Phi$. Let $\e = \e(x) \ra 0$ sufficiently slowly. Taking $0 \leq \Phi \leq 1_{[0,1]}$ to be a smooth deformation of $1_{[0,1]}$ that is $1$ on $[0,1-\e]$, and recalling that $q$ is prime, we get
\begin{align*}
\sum_{n \in \mc{T}_{y,z}} \Phi(dn/x) 1_{(n,q) = 1} &= \sum_{\ss{n \leq x/d \\ n \in \mc{T}_{y,z}}} 1_{(n,q) = 1} + O\left(\sum_{\ss{(1-\e)x/d <n \leq x/d \\ n \in \mc{T}_{y,z}}} 1\right) \\
&=  \Theta(x/d,y,z) + O(\Theta(x/(dq),y,z) + |\Theta(x/d,y,z) - \Theta((1-\e)x/d,y,z)|).
\end{align*}
As $1 \leq d \leq y$, upon employing Lemma \ref{lem:SaiasAsymp} and Lemma \ref{lem:dlBTen} we get that 
\begin{align*}
\Theta(x/d,y,z) = (1+o_{u \ra \infty}(1))\prod_{p \leq z}\left(1-\frac{1}{p}\right) \Psi(x/d,y)
&= (1+o_{u \ra \infty}(1)) \frac{1}{d^{\alpha}}\Psi(x,y) \prod_{p \leq z} \left(1-\frac{1}{p}\right) \\
&= (1+o_{u \ra \infty}(1)) d^{-\alpha}\Theta(x,y,z).
\end{align*}
The error term may be treated similarly.
First,
$$
\Theta(x/(dq),y,z) \leq \Psi(x/(dq),y) \ll (dq)^{-\alpha} \Psi(x,y) \ll \frac{\log z}{\sqrt{q}} d^{-\alpha}\Psi(x,y)\prod_{p \leq z} \left(1-\frac{1}{p}\right) \ll \frac{\log y}{\sqrt{y}} d^{-\alpha}\Theta(x,y,z),
$$
which is acceptable. Next, as $(1-\e)d^{-1} \geq (2d)^{-1}$ for $x$ large enough,
\begin{align*}
\Theta(x/d,y,z) - \Theta((1-\e)x/d,y,z) &\ll \prod_{p \leq z} \left(1-\frac{1}{p}\right) \left|\Psi(x/d,y) - \Psi((1-\e)x/d,y)\right| \\
&\ll  d^{-\alpha} \Psi(x,y) \prod_{p \leq z} \left(1-\frac{1}{p}\right) \left(1-(1-\e)^{\alpha}\right) \\
&\ll \e d^{-\alpha}\Theta(x,y,z).
\end{align*}
Combining these estimates, 
we find that
$$
\sum_{n \in \mc{T}_{y,z}} \Phi(dn/x) 1_{(n,q) = 1} = (1+o_{u \ra \infty}(1))d^{-\alpha}\Theta(x,y,z) + O\left(\left(\e + \frac{\log y}{\sqrt{y}}\right) d^{-\alpha}\Theta(x,y,z)\right) = (1+o_{u \ra \infty}(1)) d^{-\alpha} \Theta(x,y,z). 
$$
Combined with \eqref{eq:MTTodo}, we find
$$
\Theta(x/d,y,z;q,a; \Phi) = (1+o_{u \ra \infty}(1))\frac{\Theta(x,y,z)}{d^{\alpha}\phi(q)},
$$
uniformly over $1 \leq d \leq y$. Combining this with Lemma \ref{lem:SaiasAsymp} suffices to prove the claim once $u$ is sufficiently large.
\end{proof}

\bibliographystyle{amsplain}

\end{document}